\documentclass[11pt,a4paper,twoside]{amsart}
%%%%%%%%%%%%%%%%%%%%%%%%%%%%%%%%%%%%%%%%%%%%%%%%%%%%%%%%%%%%%%%%%%%%%%%%%%%%%%%%%%%%%%%%%%%%%%%%%%%%%%%%%%%%%%%%%%%%%%%%%%%%
\usepackage{amsmath,amssymb,amsfonts,amsthm}
\usepackage{multirow}
\usepackage{xcolor}
\usepackage{hyperref}
\usepackage{graphicx}

%TCIDATA{OutputFilter=LATEX.DLL}
%TCIDATA{LastRevised=Friday, July 14, 2006 19:07:58}
%TCIDATA{<META NAME="GraphicsSave" CONTENT="32">}

\setcounter{MaxMatrixCols}{10}
\newcommand{\cg}{{\mathfrak g}}

\def\sgrad{\textrm{sgrad}}

\def\eps{\epsilon}

\newcommand{\cD}{{\mathcal D}}
\newcommand{\cL}{{\mathcal L}}

\def\cg{{\mathfrak g}}
\def\C{{\mathbb C}}
\def\Sph{{\mathbb S}}
\def\L{\bigtriangleup}

\def\<{\langle}
\def\>{\rangle}

\def \D{{\mathcal{D}}}

\def\N{\mathbb{N}}

\newtheorem{The}{Theorem}[section]
\newtheorem{prop}[The]{Proposition}
\newtheorem{Lem}[The]{Lemma}
\newtheorem{cor}[The]{Corollary}
\theoremstyle{definition}
\newtheorem{Def}[The]{Definition}

\newtheorem{Rem}[The]{Remark}

\thanks{2010 Mathematical Subject Classification.
 58D05, 35Q35, 53C22, 53C80.}

\begin{document}
\title{Conjugate points along spherical harmonics }
\author{ Ali Suri }
\address{Ali Suri, Universit\"{a}t Paderborn, Warburger Str. 100,
33098 Paderborn, Germany}
\email{asuri@math.upb.de}
\maketitle {\hspace{2.5cm}}

\begin{abstract} 
Utilizing structure constants, we present a version of the Misiolek criterion for identifying conjugate points. We propose an approach that enables us to locate these points along solutions of the quasi-geostrophic equations on the sphere $\Sph^2$. We demonstrate that for any spherical harmonics $Y_{lm}$ with $1 \leq |m| \leq l$, except for $Y_{1\pm1}$ and $Y_{2\pm 1}$, conjugate points can be determined along the solution generated by the velocity field $e_{lm}=\nabla^\perp Y_{lm}$. Subsequently, we investigate the impact of the Coriolis force on the occurrence of conjugate points. Moreover, for any zonal flow generated by the velocity field $\nabla^\perp Y_{l_1~0}$, we demonstrate that varying the rotation rate can lead to the appearance of conjugate points along the corresponding solution, where $l_1 = 2k+1. \in \mathbb{N}$ Additionally, we prove the existence of conjugate points along (complex) Rossby-Haurwitz waves and explore the effect of the Coriolis force on their stability.

\textbf{Keywords}: Conjugate points, Group of volume preserving diffeomorphisms, Misiolek criterion, Spherical harmonics, structure constants, quasi-geostrophic equations, zonal flow, Coriolis force, central extension.\\

%%%%%%%%%%%%%%%%%%%%%%%%%%%%%%%%%%%%%%%%%%%%%%%%%%%%%%%%%%%%%%%%%%%%
\end{abstract}

\pagestyle{headings} \markright{ Conjugate points along spherical harmonics}
\tableofcontents
%
%
%%%%%%%%%%%%%%%%%%%%%%%%%%%%%%%%%%%%%%%          Introduction       %%%%%%%%%%%%%%%%%%%%%%%%%%%%%%%%%%%%%%%%
%%%%%%%%%%%%%%%%%%%%%%%%%%%%%%%%%%%%%%%          Introduction       %%%%%%%%%%%%%%%%%%%%%%%%%%%%%%%%%%%%%%%%
%
%

\section{Introduction}
Let $M$ be a compact Riemannian manifold filled with  an incompressible non-viscous fluid. The group of volume-preserving diffeomorphisms, denoted by ${\mathcal{D}}_{vol}(M)$, serves as the configuration space (group) for this motion. In his work, Vladimir Arnold \cite{Arnold} considered  the $\mathcal{L}^2$ kinetic energy on the Lie algebra $\cg=T_e {\mathcal{D}}_{vol}(M)$ of divergence-free vector fields and extended it to a right-invariant metric on this group. He observed that the geodesic equations of  this $\mathcal{L}^2$ metric correspond to solutions of the Euler equations for incompressible fluids . The analytical features of this method were further developed by Ebin and Marsden \cite{Ebin-Marsden}.

Subsequently, this approach has been employed to study numerous nonlinear partial differential equations in mathematical physics. Indeed, altering the manifold $M$, the configuration space (group), and the weak Riemannian metric provides a way to examine group-theoretic geometric mechanics in diverse situations (for a survey see \cite{Arnold98}). Equations that appear within this framework are referred to as "Euler-Arnold" equations.

For $M=\mathbb{T}^2$ the two-dimensional flat torus, Arnold computed the sectional curvature of ${\mathcal{D}}_{vol}(\mathbb{T}^2)$, finding negativity in most directions. This suggests that nearby geodesics rapidly diverge, making the space of solutions unstable.
Another interpretation of this phenomenon is the unreliability of long-term weather prediction, as investigated by \cite{Arnold}, \cite{Luk,Yoshida}, and \cite{Suri-2023} in the context of a perfect fluid on a flat torus, a sphere and a rotating sphere respectively. It is observed that rotation could have a stabilizing effect on fluid motion \cite{Lee-Pre,Suri-2023}.

Arnold raised the question of the existence of conjugate points after noticing that on ${\mathcal{D}}_{vol}(\mathbb{T}^2)$, in certain directions, the sectional curvature is positive. %On the other hand, conjugate points, associated with critical points of the exponential map on ${\mathcal{D}}{vol}(M)$, can be interpreted as the Lagrangian stability of solutions \cite{Misiolek}. 
Misiolek in \cite{Misiolek} found conjugate points along rotation on $\Sph^2$ and $\Sph^3$ and in \cite{Misiolek99} on the flat torus $\mathbb{T}^2$. In \cite{Misiolek99} Misiolek provided a sufficient  criterion which guarantees the existence of conjugate points along time independent solution of the Euler-Arnold equation on ${\mathcal{D}}_{vol}(M)$. This criterion has been used to study the existence of conjugate points along time dependent and time independent solution on torus, sphere and ellipsoid \cite{Benn2021, Tauchi-Yoneda, Brigant-Preston, Drivas}.

In \cite{Benn2021}, Benn examined Rossby-Haurwitz waves on the sphere and demonstrated the existence of conjugate points along  these waves with some specific wave numbers. Apart from this study, very little is known about conjugate points along non-zonal solutions on the group of volume-preserving diffeomorphisms on the sphere. On the other hand, Tauchi and Yoneda proved that the Misiolek criterion (referred to as Misiolek curvature in their work) for zonal flows is consistently non-positive. This implies that the only zonal flow with conjugate points is generated by rotation (or equivalently by the first zonal spherical harmonic $Y_{1~0}$), as distinguished in \cite{Misiolek}.

In the case of the torus, the situation is more clear. Specifically, conjugate points along Kolmogorov flows $\psi_{mn}(x,y)=-\cos(mx)\cos(ny)$ which are eigenfunctions of the Laplacian on $\mathbb{T}^2$ exist for any $m, n \in \mathbb{N}$, with the exception of $(m, n) = (1, 1)$ \cite{Brigant-Preston}. Le Brigant and Preston \cite{Preston-Brig} proposed the problem of substituting the torus with the sphere and asserting the same findings.%They suggested to replace torus with sphere and state the same results.  

The positivity of the Misiolek criterion implies the positivity of curvature. More precisely, if there exist conjugate points along a geodesic, then the sectional curvature along this geodesic attains positive values. Due to this fact, we interpret the existence of conjugate points as an indicator of Lagrangian stability along the solution.

The presence  of the Coriolis effect makes the situation more practical. The only studies addressing the influence of the Coriolis effect on conjugate points and curvature are found in \cite{Lee-Pre}, \cite{Tauchi-Yoneda-2}, and \cite{Suri-2023}. In \cite{Tauchi-Yoneda-2},  Tauchi and Yoneda managed to establish the positivity of the Misiolek Criterion. However, providing an explicit solution for such a case remains an open problem.
%In \cite{Tauchi-Yoneda-2} the could prove the positivity of the Misiolek Criterion but, presenting an explicit solution %for such case remaind as an open problem.

On the other hand, in meteorology, the stability of zonal flows on a rotating sphere, according to the critical ratios for the rotation rate, has been studied by several authors, e.g., \cite{Bains} and \cite{Sasaki}.

\textbf{Contributions.} In this paper, we aim to address the conjectures mentioned earlier. First we prove that for any spherical harmonic $Y_{l_1m_1}$ with $1<m_1\leq l_1$ the Misiolek criterion $MC(e_{l_1m_1},e_{m-m})>0$ where $2\leq m\leq m_1$ and $e_{l_1m_1}=\nabla^\perp Y_{l_1m_1}$. The same holds true for $MC(e_{l_1~1},e_{~l_2~1})>0$ with $2\leq l_2<l_1$.
Moreover, we will show that, in the presence of the Coriolis force, the zonal flow $e_{l_10}$ ceases to be a global minimizer for any odd $l_1\in\mathbb{N}$ when the appropriate  speed and direction for rotation (rotation rates) are chosen.
Moreover, for (complex) Rossby-Haurwitz waves, which are time-dependent solutions of quasi-geostrophic equations, the existence of conjugate points is proved. 
We observe that the Coriolis effect stabilizes the system, generating conjugate points that wouldn’t appear without it.

\textbf{Outline.}
Section 2 is dedicated to a review of the geometry of the one-dimensional central extension of the quantomorphisms group and the derivation of the quasi-geostrophic equations. By employing the corresponding adjoint and co-adjoint operators, we present the appropriate Misiolek criterion for our framework. Then, following  \cite{Arak-Sav, Mess, Dowker, Mozheng}, we introduce spherical harmonics, Wigner $3j$ symbols, complex and real structure constants and discuss their properties.

In Section 3, after writing the Misiolek criterion according the the structure constants, we state the main theorem and prove that \textbf{i.} $MC(e_{l_1m_1},e_{m-m})>0$ for any $1< m_1\leq l_1$ and $2\leq m\leq m_1$ and \textbf{ii.} $MC(e_{l_1~1},e_{l_2~1})>0$ for any $2\leq l_2 < l_1$.
Moreover, we prove some linearity properties of the Misiolek criterion, which implies that by replacing $e_{m-m}$ with
$e_{m-m}+\sum_{j=1}^n x_j e_{l_jm_j}$, where $||(x_1,\dots,x_n)||$ is sufficiently small, conjugate points still exist.

In Section 4, first we introduce the Misiolek criterion in the presence of the Coriolis force, as determined by the structure constants.  
Computations demonstrate that the Coriolis effect introduces additional directions for conjugate points beyond those proposed in Section 3. In fact, using structure constants and their properties, we demonstrate that for odd $l_1$ and an appropriate choice of the rotation rate  $a$ (call this suitable choice the critical value), governing the speed and direction of rotation, conjugate points along solutions generated by the velocity field $e_{l_1 0}$ exist. A table indicating the critical values of $a$ for the wave numbers $l_1=3,5,7$ is presented.\\

On the other hand, Rossby-Haurwitz waves, widely used in meteorology, provide a time-dependent class of solutions for the quasi-geostrophic equations. We observe that the Coriolis effect stabilizes the system along these solutions, generating conjugate points that wouldn't appear without it.

%In the appendix, using a different  approach, we establish that for even $l_1$, the Misiolek criterion
%$MC(e_{l_1~0},g)\leq 0$ for any $g\in C^\infty(\Sph^2)$.

%\textbf{The paper is supported by a Maple program that enables us to calculate structure constants and the Misiolek criterion for given wave numbers.}
%
%
%
%%%%%%%%%%%%%%%%%%%%%%%%%%%%%%%%%%%%%%%%%%%%%%%%%%%%%%%%%   Hopf Fibration and qg eq   %%%%%%%%%%%%%%%%%%%%%%%%%%%%%%%%%%%%%%%
%%%%%%%%%%%%%%%%%%%%%%%%%%%%%%%%%%%%%%%%%%%%%%%%%%%%%%%%%   Hopf Fibration and qg eq   %%%%%%%%%%%%%%%%%%%%%%%%%%%%%%%%%%%%%%%
%%%%%%%%%%%%%%%%%%%%%%%%%%%%%%%%%%%%%%%%%%%%%%%%%%%%%%%%%   Hopf Fibration and qg eq   %%%%%%%%%%%%%%%%%%%%%%%%%%%%%%%%%%%%%%%
%
\section{Quasi-geostrophic equations and Misiolek criterion}
Ebin and Preston in their work \cite{Ebin-Preston}  derived the $\beta$-plane quasi-geostrophic equation, often abbreviated as QGS, as an Euler-Arnold equation. In this context, they employed the $\mathcal{L}^2$ metric on the quantomorphism group $\D_q(\Sph^3)$. Then they used the Hopf fibration and central extension to  derive QGS as an Euler-Arnold equation  on $\widehat{\D_{vol}}(\Sph^2)$ the central extension of $\D_{vol}(\Sph^2)$.

Following their approach, we will first provide a brief review of certain results from \cite{Ebin-Preston-arXiv} and \cite{Ebin-Preston}. This presentation may involve a different approach. Subsequently, we introduce the relevant Misiolek criterion by utilizing the corresponding adjoint and co-adjoint operators. 
Following \cite{Ebin-Preston, Ebin-Preston-arXiv} consider  a Boothby-Wang fibration $\pi:M\longrightarrow N$ where $M$ is contact manifold with the contact form $\theta$, the Reeb vector field $E$ and $N$ is symplectic manifold with the symplectic form $\omega$ and the property $\pi^*\omega=d\theta$. The following lemma is true for any contact manifold $M$ and as a special case for $M=\Sph^3$. The Riemannian metric on $M$ is denoted by $<,>$.

%Moreover suppose that $E_1$ and $\theta$ be the Reeb vector field and contact form on $\Sph^3$ respectively. %In fact the Hopf fibration is an example of Boothby-Wang fibration. Ebin and Preston in \cite{Ebin-Preston, %Ebin-Preston-arXiv} using the Riemannian geometry of the quantomorphism group of $\Sph^3$ and the Boothby-%Wang fibration proposed an approach to study the quasi-geographic equations on $\Sph^2$ as we review here.

For $s>\frac{dim M}{2}+1$, the quantomorphism group $\cD^s_q(M)=\{\eta\in\cD^s(M);~\eta^*\theta=\theta\}$ admits a smooth manifold structure (corollary 2.7 \cite{Ebin-Preston-arXiv}). The tangent space is
\begin{equation*}
\cg:=T_e \cD^s_q(M)=\{ S_\theta f;~f\in \mathcal{F}_{E}^{s+1}(M,\mathbb{R})\}
\end{equation*}
where $\mathcal{F}_{E}^{s+1}(M,\mathbb{R})=\{f:M\longrightarrow \mathbb{R};~ f\textrm{ is~} H^{s+1} \textrm{~and~} E(f)=0\}$ and the operator $S_\theta$ is defined by the following properties
\begin{equation*}
u=S_\theta f \quad \Longleftrightarrow  \quad \theta(u)=f~~ \textrm{and}~~i_ud\theta=-df.
\end{equation*}
%In this case $S_\theta f=fE_1   -  \frac{1}{2}(E_3f)E_2   +  \frac{1}{2}(E_2f)E_3$ and
%
%\begin{equation*}
%\Delta_\theta f=\alpha^2  -  \frac{1}{4}E_2^2  -  \frac{1}{4}E_3^2
%\end{equation*}
%where 
The contact Laplacian is $\Delta_\theta=S_\theta^* S_\theta$ where $S_\theta^*$ is the adjoint of the $S_\theta$ with respect to the right invariant $\cL^2$-metric induced by
\begin{equation*}
\ll S_\theta f, S_\theta g \gg=\int_{M}<S_\theta f,S_\theta g>d\mu.
\end{equation*}
on $\cD^s_q(M)$. For any $f,g\in\mathcal{F}_{E}^{s+1}(M,\mathbb{R})$ the contact Poisson bracket is defined by the relation $\{f,g\}=(S_\theta f) g$. In this case we have $S_\theta\{f,g\}=[S_\theta f,S_\theta g]$ which means that $S_\theta$ is a Lie algebra morphism.
%
%%%%%%%%%%%%%%%%%%%%%%%%%%%%%%%%%%%%%%%%%%%%%%%%%%%%%%%%%    begin  Lemma  ad^* in D_q      %%%%%%%%%%%%%%%%%%%%%%%%%%%%%%%%%%%%%%%%%%%%
%%%%%%%%%%%%%%%%%%%%%%%%%%%%%%%%%%%%%%%%%%%%%%%%%%%%%%%%%    begin  Lemma  ad^* in D_q      %%%%%%%%%%%%%%%%%%%%%%%%%%%%%%%%%%%%%%%%%%%%
%

\begin{Lem}
Let $s>\frac{dim M}{2}+1$ and $\cg=T_e\cD^s_q(M)$. Then $ad^*_u:\cg\rightarrow\cg$ is given by
\begin{equation}\label{eq ad* on quantomorphism}
ad^*_uv=S_\theta\Delta_\theta^{-1} \{f,\Delta_\theta g\}
\end{equation}
where $f,g\in\mathcal{F}_{E}^{s+1}(M,\mathbb{R})$ and  $u=S_\theta f,v=S_\theta g$.
\end{Lem}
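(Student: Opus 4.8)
The plan is to compute $ad^*_u$ directly from its defining relation against the right-invariant $\mathcal{L}^2$-metric, namely $\ll ad^*_u v, w \gg = \ll v, ad_u w \gg$ for all $w = S_\theta h \in \cg$, and then to recognize the resulting expression as the right-hand side of \eqref{eq ad* on quantomorphism}. First I would fix $u = S_\theta f$, $v = S_\theta g$, $w = S_\theta h$ with $f,g,h \in \mathcal{F}_E^{s+1}(M,\mathbb{R})$ and rewrite $\ll v, ad_u w \gg$ using the facts recalled just before the statement: $S_\theta$ is a Lie algebra morphism, so $ad_u w = [S_\theta f, S_\theta h] = S_\theta\{f,h\}$, and hence $\ll v, ad_u w \gg = \ll S_\theta g, S_\theta\{f,h\}\gg$. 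Next, using the definition of the contact Laplacian $\Delta_\theta = S_\theta^* S_\theta$ and the adjoint relation $\ll S_\theta a, S_\theta b \gg = (\Delta_\theta a, b)_{\mathcal{L}^2(M)}$ for the underlying $\mathcal{L}^2$ function pairing, this becomes $(\Delta_\theta g, \{f,h\})_{\mathcal{L}^2}$.

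The heart of the computation is then an integration-by-parts identity for the contact Poisson bracket: one must show that $\{f,\cdot\}$ is skew-adjoint with respect to the $\mathcal{L}^2(M)$ inner product on $\mathcal{F}_E^{s+1}$, i.e. $(\phi, \{f,h\})_{\mathcal{L}^2} = -(\{f,\phi\}, h)_{\mathcal{L}^2} = (\{\phi,f\},h)_{\mathcal{L}^2}$, or equivalently that $\{f,h\} = (S_\theta f)h$ is a divergence-type term when paired against a function. This is where I expect the main obstacle to lie: it requires knowing that $S_\theta f$ is divergence-free with respect to the volume form $d\mu$ (which holds since $S_\theta f \in \cg \subset T_e\cD^s_{vol}(M)$, the quantomorphisms being in particular volume-preserving) so that $\int_M (S_\theta f)(h)\,\phi\,d\mu = -\int_M h\,(S_\theta f)(\phi)\,d\mu$, plus care about the role of $E(f) = E(h) = 0$ and the compatibility of $d\mu$ with $\theta \wedge (d\theta)^{n}$. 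Granting this, $(\Delta_\theta g, \{f,h\})_{\mathcal{L}^2} = (\{f, \Delta_\theta g\}, h)_{\mathcal{L}^2}$ after relabelling.

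To finish, I would massage $(\{f,\Delta_\theta g\}, h)_{\mathcal{L}^2}$ back into the form $\ll S_\theta(\cdot), S_\theta h \gg$: writing $\{f,\Delta_\theta g\} = \Delta_\theta\bigl(\Delta_\theta^{-1}\{f,\Delta_\theta g\}\bigr)$ and using $(\Delta_\theta a, h)_{\mathcal{L}^2} = \ll S_\theta a, S_\theta h \gg$ once more, we get $\ll S_\theta \Delta_\theta^{-1}\{f,\Delta_\theta g\},\, S_\theta h\gg = \ll S_\theta\Delta_\theta^{-1}\{f,\Delta_\theta g\}, w\gg$. Since $w \in \cg$ was arbitrary and the $\mathcal{L}^2$-metric is (weakly) nondegenerate on $\cg$, comparing with the defining relation for $ad^*_u v$ yields $ad^*_u v = S_\theta \Delta_\theta^{-1}\{f,\Delta_\theta g\}$, which is exactly \eqref{eq ad* on quantomorphism}. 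One should also remark on the invertibility of $\Delta_\theta$ on the relevant space of functions (mean-zero, $E$-invariant) and on the usual smoothness/regularity caveats that make these manipulations legitimate at Sobolev level $s$, but these are the same standard points already invoked in \cite{Ebin-Preston-arXiv} and I would simply cite them rather than redo them.
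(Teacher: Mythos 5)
Your overall route is the same as the paper's: test $ad^*_uv$ against an arbitrary $w=S_\theta h$, use the morphism property $[S_\theta f,S_\theta h]=S_\theta\{f,h\}$ and $\Delta_\theta=S_\theta^*S_\theta$ to reduce everything to an $\mathcal{L}^2$ pairing of functions, integrate by parts using that $S_\theta f$ is divergence-free (quantomorphisms preserve $\theta\wedge(d\theta)^n$), insert $\Delta_\theta\Delta_\theta^{-1}$ to return to the $S_\theta$-pairing, and conclude by weak nondegeneracy. This is exactly the paper's computation, and your identification of the integration-by-parts step as the crux is correct.

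However, as written your derivation has a sign inconsistency. You take $ad_uw=[S_\theta f,S_\theta h]=S_\theta\{f,h\}$, and you correctly state the skew-adjointness $(\phi,\{f,h\})_{\mathcal{L}^2}=-(\{f,\phi\},h)_{\mathcal{L}^2}=(\{\phi,f\},h)_{\mathcal{L}^2}$; but you then assert $(\Delta_\theta g,\{f,h\})_{\mathcal{L}^2}=(\{f,\Delta_\theta g\},h)_{\mathcal{L}^2}$ \emph{after relabelling}, which contradicts the identity you just wrote: it equals $(\{\Delta_\theta g,f\},h)_{\mathcal{L}^2}=-(\{f,\Delta_\theta g\},h)_{\mathcal{L}^2}$. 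With your convention $ad_uw=+[u,w]$ and the correct integration by parts, the computation yields $ad^*_uv=S_\theta\Delta_\theta^{-1}\{\Delta_\theta g,f\}=-S_\theta\Delta_\theta^{-1}\{f,\Delta_\theta g\}$, the negative of \eqref{eq ad* on quantomorphism}; you land on the stated formula only because the two sign slips cancel. The paper, consistently with its Euler--Arnold and Misiolek-criterion computations, uses the right-invariant convention $ad_uw=-[u,w]=-S_\theta\{f,h\}$. Adopt that convention at the outset, keep the minus sign produced by the integration by parts, and use antisymmetry $\{\Delta_\theta g,f\}=-\{f,\Delta_\theta g\}$ at the last step; with that repair your argument coincides with the paper's proof.
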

\begin{proof}
For $u=S_\theta f$, $v=S_\theta g$ and $w=S_\theta h$ in $\cg$ we have
\begin{eqnarray*}
\ll ad^*_uv,w \gg_\cg   &=&   \int_M  < ad^*_uv,w > d\mu = \int_M < v,ad_uw > d\mu=-\int_M < v,[u,w] > d\mu\\
&=&   -\int_M < S_\theta g,[S_\theta f,S_\theta h] > d\mu  =  -\int_M < S_\theta g,S_\theta\{f, h\} > d\mu\\
& =&    -\int_M S_\theta^*S_\theta g\{f, h\}d\mu   =   -\int_M \Delta_\theta g\{f, h\}d\mu\\
&=&   -\int_M \{\Delta_\theta g,f\}hd\mu  =   -\int_M S_\theta^*S_\theta \Delta_\theta^{-1}\{\Delta_\theta g,f\}hd\mu\\
&=&   -\int_M   <  S_\theta \Delta_\theta^{-1}\{  \Delta_\theta g,f  \}   ,   S_\theta h  >  d\mu  = \ll S_\theta \Delta_\theta^{-1}\{f,\Delta_\theta g \}   ,   w \gg_\cg
\end{eqnarray*}
Since $w=S_\theta h\in\cg$ was arbitrary we get $ad^*_uv=S_\theta\Delta_\theta^{-1}\{f,\Delta_\theta g\}$.
\end{proof}
%
%
%%%%%%%%%%%%%%%%%%%%%%%%%%%%%%%%%%%%%%%%%%%%%%%%%%%%%%%%%       end Lemma      %%%%%%%%%%%%%%%%%%%%%%%%%%%%%%%%%%%%%%%%%%%%
%%%%%%%%%%%%%%%%%%%%%%%%%%%%%%%%%%%%%%%%%%%%%%%%%%%%%%%%%       end Lemma      %%%%%%%%%%%%%%%%%%%%%%%%%%%%%%%%%%%%%%%%%%%%
%
As a result the Euler-Arnold (geodesic) equation on $\cD^s_q(M)$ is given by
\begin{eqnarray*}
0=\partial_tu+ad^*_uu   &=&    \partial_t S_\theta f + ad^*_{S_\theta f}S_\theta f\\
&=&  \partial_t S_\theta f + S_\theta\Delta_\theta^{-1}\{f,\Delta_\theta f\}\\
&=& S_\theta \Big(  \partial_t   f +  \Delta_\theta^{-1} \{f,\Delta_\theta f\}  \Big)
\end{eqnarray*}
which implies that  $\partial_t   f +  \Delta_\theta^{-1} \{f,\Delta_\theta f\}=0$. Now we apply the contact Laplacian on both sides of the last equation and we get
\begin{equation}\label{eq geodesic eq on D_q with L^2 metric}
\partial_t\Delta_\theta f +  \{f,\Delta_\theta f\}=0.
\end{equation}
In \cite{Ebin-Preston-arXiv} theorem 4.1 for a different approach to this equation.
We recall that in Darboux coordinates $(x^1,\dots,x^n,y^1,\dots y^n,z)$ for $M$ we have $\theta=dz+\sum_{k=1}^nx^kdy^k$, $E=\partial_z$
\begin{equation*}
    S_\theta f=\sum_{k=1}^n \Big( -\frac{\partial f}{\partial y^k} \frac{\partial }{\partial x^k} +      \frac{\partial f}{\partial x^k} \frac{\partial }{\partial y^k}  \Big)
+\Big( f -   \sum_{k=1}^nx^k\frac{\partial f}{\partial x^k}\Big)  \frac{\partial }{\partial z}     
\end{equation*}
and 
\begin{equation*}
\Delta_\theta f= 2f  - \sum_{k=1}^n   \frac{\partial }{\partial x^k}   \Big(   \big(1+(x^k)^2 \big) 
\frac{\partial f}{\partial x^k} \Big)     
-    \sum_{k=1}^n  \frac{\partial }{\partial y^k} \frac{\partial f }{\partial y^k}      
\end{equation*}
%
%
%%%%%%%%%%%%%%%%%%%%%%%%%%%%%%%%%%%%%%%%%%%%%%%%%%%%%%%%%%%     Central extension    %%%%%%%%%%%%%%%%%%%%%%%%%%%%%%%%%%%%%%%%%
%%%%%%%%%%%%%%%%%%%%%%%%%%%%%%%%%%%%%%%%%%%%%%%%%%%%%%%%%%%%%     Central extension    %%%%%%%%%%%%%%%%%%%%%%%%%%%%%%%%%%%%%%%%%
%%%%%%%%%%%%%%%%%%%%%%%%%%%%%%%%%%%%%%%%%%%%%%%%%%%%%%%%%%%%%     Central extension    %%%%%%%%%%%%%%%%%%%%%%%%%%%%%%%%%%%%%%%%%
\subsection{Central extension of the quantomorphism group} 
Suppose that $M=\Sph^3$ and  
\begin{eqnarray*}
\pi:\Sph^3   &\longrightarrow&    \Sph^2
\end{eqnarray*} 
is the Hopf fibration. Following \cite{Ebin-Preston} and \cite{Ebin-Preston-arXiv} we consider the central extension of the Lie Algebra $\cg=T_e\cD^s_q(\Sph^3)$ with $\mathbb{R}$ which is denoted  by $\hat\cg=\cg\ltimes_\Omega\mathbb{R}$. For $u=S_\theta f$ and $v=S_\theta g$ in $\cg$ the map $ \Omega(u,v)=\int_{\Sph^3}\phi\{f,g\}d\mu$ and $\phi:M\longrightarrow\mathbb{R}$ is a known function (usually the distance form equator). Recall that for any $(u,a),(v,b)\in \cg\ltimes_\Omega\mathbb{R}$  the Lie bracket is defined by
\begin{equation*}
\big[  (u,a),(v,b)\big]=\big(  S_\theta\{f,g\}  ,  \Omega(u,v)  \big)
\end{equation*}
and the inner product is
\begin{equation}\label{Inner prod hat S3}
\ll  (u,a),(v,b)  \gg_{\hat{\cg}}  = \int_{M} <S_\theta f  ,  S_\theta g>d\mu  + ab.
\end{equation}
The operator $T:\cg\longrightarrow \cg$ defined by the relation $\ll Tu,v \gg=\Omega(u,v)$ is given by $T(S_\theta f)=S_\theta\Delta_\theta^{-1}\{\phi,f\}$ (for more details see \cite{Suri-2023}).
Moreover  $\hat{ad}_{(u,a)}^*:\hat{\cg}\longrightarrow \hat{\cg}$ is given by
$  \hat{ad}_{(u,a)}^* (v,b)=(ad^*_uv  - b Tu,0)\in\hat{\cg}$  and  for the curve $(u,a):(-\epsilon,\epsilon)\longrightarrow \hat{\cg}$  the Euler-Arnold  equation is given by
\begin{equation*}
\left\{ \begin{array}{ll}  \partial_tu  +  ad^*_uu  -  a(t)Tu  =  0   \\
\partial_ta(t)=0
\end{array}\right.
\end{equation*}
The second equation implies that $a(t)=a$ is constant and following the procedure for derivation of equation (\ref{eq geodesic eq on D_q with L^2 metric}) for
$u=S_\theta f$ we have
\begin{equation}\label{eq geo-eq of cent-ext pre}
\partial_t\Delta_\theta f  +  \{f,\Delta_\theta f\}  -  a\{\phi,f\}  =  \partial_t\Delta_\theta f  +  \{f,\Delta_\theta f  + a\phi\} =  0
\end{equation}
%
%%%%%%%%%%%%%%%%%%%%%%%%%%%%%%%%%%%%%%%%%%%%%%%%%%%%%%%%%%%%%%%%%%           Covariant derivative    %%%%%%%%%%%%%%%%%%%%%%%%%%%%%%%%%%%%%%%%%%%%%
%%%%%%%%%%%%%%%%%%%%%%%%%%%%%%%%%%%%%%%%%%%%%%%%%%%%%%%%%%%%%%%%%%           Covariant derivative    %%%%%%%%%%%%%%%%%%%%%%%%%%%%%%%%%%%%%%%%%%%%%

Finally we note that the covariant derivative on $(\cD^s_q(\Sph^3),\ll,\gg_{\cg})$ is given by
\begin{eqnarray*}
2\nabla_uv    &=&   -ad_uv  +ad^*_uv+ad^*_vu\\
&=&     S_\theta\{f,g\}   +    S_\theta\Delta_\theta^{-1}\{f,\Delta_\theta g\}   +   S_\theta\Delta_\theta^{-1}\{g,\Delta_\theta f\}
\end{eqnarray*}
and for $(\cD^s_q(\Sph^3)\ltimes_\Omega\mathbb{R},\ll,\gg_{\hat{\cg}})$ we have
\begin{eqnarray*}
2\hat\nabla_{(u,a)}(v,b)  &=&  - \hat{ad}_{(u,a)}(v,b)+\hat{ad}^*_{(u,a)}(v,b)+\hat{ad}^*_{(v,b)}(u,a)\\
&=&   \Big(    S_\theta\{f,g\}  , - \Omega(S_\theta f,S_\theta g) \Big)  +  \Big(   S_\theta\Delta_\theta^{-1}\{f,\Delta_\theta g\} - bTS_\theta f    ,   0   \Big) \\
&&    +  \Big(   S_\theta\Delta_\theta^{-1}\{g,\Delta_\theta f\} - aTS_\theta g    ,   0   \Big)
\end{eqnarray*}
On the other hand
\begin{eqnarray*}
\hat\nabla_{(u,a)}(v,b)  +  \hat\nabla_{(v,b)}(u,a)    &=&    \frac{1}{2}\Big( -\hat{ad}_{(u,a)}(v,b)   +   \hat{ad}_{(u,a)}^*(v,b)    +    \hat{ad}_{(v,b)}^*(u,a)\\
&&    -\hat{ad}_{(v,b)}(u,a)   +  \hat{ad}_{(v,b)}^*(u,a)  +  \hat{ad}_{(u,a)}^*(v,b)   \Big)\\
&=&   \hat{ad}_{(u,a)}^*(v,b)    +    \hat{ad}_{(v,b)}^*(u,a).
\end{eqnarray*}
The last two equations imply that
\begin{equation}\label{eq symmetric covariant derivative}
\hat\nabla_{(u,a)}(v,b)  +  \hat\nabla_{(v,b)}(u,a)    =  ( ad^*_uv  - bTu  + ad^*_vu  - aTv  ,  0)
\end{equation}
%
%%%%%%%%%%%%%%%%%%%%%%%%%%%%%%%%%%%%%%%%%%%%%%%%%%%%%%%%%%%%%%%%%     Remark    %%%%%%%%%%%%%%%%%%%%%%%%%%%%%%%%%%%%%%%%%%%%%%%%%
%%%%%%%%%%%%%%%%%%%%%%%%%%%%%%%%%%%%%%%%%%%%%%%%%%%%%%%%%%%%%%%%%     Remark    %%%%%%%%%%%%%%%%%%%%%%%%%%%%%%%%%%%%%%%%%%%%%%%%%
\begin{Rem}
Since for any $f\in\mathcal{F}_{E }^{s+1}(\Sph^3,\mathbb{R})$ the function $f$ is constant in the direction of the Reeb field (e.g., with respect to the last variable in the local chart of $\Sph^3$). Moreover  integration on $\Sph^3$ reduces to integration on the symplectic quotient $\Sph^2$. In this case also $\Delta_\theta=\alpha^2-\Delta$ where $\Delta$ is the usual Laplacian on $\Sph^2$. Moreover the Euler-Arnold  equations (\ref{eq geodesic eq on D_q with L^2 metric}) and (\ref{eq geo-eq of cent-ext pre}) are given by
\begin{equation}\label{eq geodesic eq on D_q with L^2 metric reduced}
\partial_t(\Delta f-\alpha^2 f) +  \{f,\Delta  f\}=0
\end{equation}
and
\begin{equation}\label{eq geodesic-equation of central extension reduced general}
\partial_t(\Delta f-\alpha^2 f) +  \{f,\Delta  f\}  - a\{f, \phi\}=0.
\end{equation}
respectively (compare with Corollary 5 of \cite{Lee-Pre}).

In the sequel, consider the following parametrization for $\Sph^2$
\begin{eqnarray*}
f:(-1,1)\times (0,2\pi)  &\longrightarrow & \Sph^2\subseteq\mathbb{R}^3\\
(\mu,\lambda)   &\longmapsto & (\sqrt{1-\mu^2}\sin\lambda  ,  \sqrt{1-\mu^2}\cos\lambda  ,  \mu)
\end{eqnarray*}

In the  $\beta$-plane approximation model  the function $\phi$ is locally given by $\phi(\lambda,\mu)=\mu$  and $\{f,g\}$ is the Poisson bracket which in Darboux coordinates resembles
\begin{equation}\label{poisson bracket preston}
\{f,g\}=     \frac{\partial f}{\partial \lambda}\frac{\partial g}{\partial \mu}  -  \frac{\partial f}{\partial \mu} \frac{\partial g}{\partial \lambda}
\end{equation}
Note that  a different sign convention for the Poisson bracket would not change our results. Finally we note that, we will deal with the equation (\ref{eq geodesic eq on D_q with L^2 metric reduced}) and the following equation which we will call  it  QGS equation or Euler equation at the presence of the \textbf{Coriolis force} (see also \cite{Skiba} for the case that $\alpha^2=0$.)
\begin{equation}\label{eq geodesic-equation of central extension reduced}
\partial_t(\Delta f-\alpha^2 f) +  \{f,\Delta  f-a\mu\}  =0.
\end{equation}
\end{Rem}
%
%
%
%
%%%%%%%%%%%%%%%%%%%%%%%%%%%%%%%%%%%%%%%%%%%%%%%%%%%%%%%%%%%%%   subsection    Misiolek Curvature     %%%%%%%%%%%%%%%%%%%%%%%%%%%%%%%%%%%%%%%%%%
%%%%%%%%%%%%%%%%%%%%%%%%%%%%%%%%%%%%%%%%%%%%%%%%%%%%%%%%%%%%%   subsection     Misiolek Curvature     %%%%%%%%%%%%%%%%%%%%%%%%%%%%%%%%%%%%%%%%%%
%%%%%%%%%%%%%%%%%%%%%%%%%%%%%%%%%%%%%%%%%%%%%%%%%%%%%%%%%%%%%   subsection     Misiolek Curvature     %%%%%%%%%%%%%%%%%%%%%%%%%%%%%%%%%%%%%%%%%%
%
%
%
%
\subsection{Misiolek criterion for quasi-geostrophic equations}\label{sub sect. Misiolek curvature}

In this section, we review the concept of conjugate points and the method introduced by Misiolek \cite{Misiolek99} to find them. This  method uses a criterion which is now known as the 'Misiolek criterion'.

For a compact manifold (here without boundary), we know that $\cD^s_{vol}(M)$ admits a smooth manifold structure modeled on a Banach space. As a result, the corresponding Euler-Arnold equation, which could be considered an ordinary differential equation, has (local) solutions, and the dependence of solutions on the initial data is differentiable. This implies that the corresponding exponential map can be defined from an open set $U \subseteq T_e \cD^s_{vol}(M)$ as follows
\[
\exp_e:U\subseteq T_e \cD^s_{vol}(M)\longrightarrow \cD^s_{vol}(M)\quad;\quad v_0\longmapsto \exp_e(u_0):=\eta(1)
\]
where $\eta:(-\eps,\eps)\longrightarrow \cD^s_{vol}(M)$ is the unique curve with $\eta(0)=e$ and $\dot\eta(0)=u_0.$ If $M$ is a 2-dimensional manifold then, $\exp_e$ is defined on the whole tangent space. In the other words, $\exp$ gives us information about the behaviour of solutions according to the initial values. Singularities of the map
\[
D\exp_e:T_0 T_e \cD^s_{vol}(M)\simeq  T_e \cD^s_{vol}(M)    \longrightarrow T_{\eta(t_0)} \cD^s_{vol}(M)
\]
are called conjugate points. Note that we have two types of conjugate points. When $Dexp_e(t_0u_0)$ is not injective we call the conjugate point mono-conjugate and in the case that $Dexp_e(t_0u_0)$ is not surjective we have epi-conjugate point. The method ontroduced by Misiolek, catches mono-conjugate points. However, in the two-dimensional case ($dim(M)=2$), the exponential is a nonlinear Fredholm operator of index zero, that is epi-conjugate and mono-conjugate coincide. 

Consider the variation of the geodesic $\eta$ defined by 
\begin{eqnarray*}
\eta(s,t):=\exp_e(t(u_0+sv_0)).    
\end{eqnarray*}%(-\eps,\eps)\times=
Locally a Jacobi field along $\eta$ looks like 
\begin{eqnarray*}
J(t):=D\exp_e(tu_0)tv_0    
\end{eqnarray*}
with the in-tial conditions $J(0)=0$ and $\dot{J}(0)=v_0$ and satisfies the Jacobi equation
\begin{equation}\label{Jacobi equation}
\nabla_{\dot\eta} \nabla_{\dot\eta} J  +  R(J,\dot\eta)\dot\eta=0.
\end{equation}
where $\nabla$ and $R$ are the corresponding covariant derivative and curvature of $\cD^s_{vol}(M).$ The Jacobi equation \eqref{Jacobi equation} is obtained by calculating the second variation of the energy functional.

An splitting  of the equation \eqref{Jacobi equation} is given by
\begin{eqnarray}\label{split Jac eq}
&&  \partial_t v  - ad_uv=w\\
&&  \partial_t w  + ad^*_uw + ad^*_wu=0  \nonumber
\end{eqnarray}
where $J(t)=v(t)\circ\eta(t)$ and  $\dot\eta(t)=u(t)\circ\eta(t)$ (e.g.see \cite{Preston-thesis}, chapter 4). 
Practically, the equation \eqref{split Jac eq} is a linearization of the Euler-Arnold equation, and a Jacobi field corresponds to a deviation of the geodesic $\eta$. Moreover, conjugate points are obtained by finding $T>0$ with $J(0)=J(T)=0$.

In order to prove the existence of  such $T$, usually we use the following index 
\begin{equation}\label{Index}
I_{0}^T(Y,Y):=\int_0^T   \Big(      \ll         \partial_tY + \nabla_{\dot\eta} Y ,   \partial_t Y+ \nabla_{\dot\eta} Y   \gg_\cg  
- \ll  R(Y,\dot\eta)\dot\eta,Y \gg_\cg  \Big)dt
\end{equation}
where $Y$ is a vector field along $\eta$. The previous equation can be obtained by multiplying \eqref{Jacobi equation} by $J$ and integrating from $0$ to $T$, and then replacing $J$ with an arbitrary field $Y$.

For $Y$ as above, with $Y(0)=Y(T)=0$, Misiolek (\cite{Misiolek99}, lemma 3) proved that if there are no points conjugate to $e=\eta(0)$ along $\eta(t)$ for $0 < t < T$, then $I_{0}^T(Y,Y)\geq 0$. Moreover, if $I_{0}^T(Y,Y)< 0$ then there exists $0<t_0\leq T$ such that $e=\eta(0)$ and $\eta(t_0)$ are conjugate.

Suppose that $v\in\cg=T_e\cD^s_{vol}(M)$ be an stationary solution of the Euler-Arnold equation. Consider the field $Y(t)=\psi(t)v(\eta(t))$ where $\psi:\mathbb{R}\longrightarrow\mathbb{R}$ is a smooth function. Note that basically, $Y$ is the right-invariant vector field generated by $v$ and multiplied by the scalar function $\psi$. In this case that the index \eqref{Index} takes the form
\begin{equation}\label{Index}
I_{0}^T(Y,Y):=\int_0^T   \Big(     \dot\psi^2(t)||v||_\cg^2    -   \psi^2(t)     \ll  \nabla_{u}[u,v] + \nabla_{[u,v]}u , v \gg_\cg \Big)dt
\end{equation}
For the stationary solutions  $v,w$, the  Misiolek criterion is defined by 
\begin{equation}\label{MC original}
MC(u,v):=  \ll  \nabla_{u}[u,v] + \nabla_{[u,v]}u , v \gg_\cg.
\end{equation}
Now, if $MC(u,v)>0$ then the conjugate points along $
\eta$ exist. More precisely, suppose there exist $\kappa>0$ such that $MC(u,v)>\kappa||v||_\cg^3>0$ and define
\[
t_0:=\frac{\pi}{\sqrt{\kappa||v||_\cg}},\quad \psi(t)=\sin(t\sqrt{\kappa||v||_\cg}).
\]
Then
\begin{eqnarray*}
I_{0}^{t_0}(Y,Y)  &=&  \int_0^{t_0}   \Big(     \dot\psi^2(t)||v||_\cg^2    -   \psi^2(t)    MC(u,v) \Big)dt\\
&<&    \int_0^{t_0}   \Big(     \dot\psi^2(t)||v||_\cg^2    -   \psi^2(t)    \kappa||v||_\cg^3 \Big)dt\\
&=&    \kappa||v||_\cg^3 \int_0^{t_0}   \Big(     \cos^2(t\sqrt{\kappa||v||_\cg})  - \sin^2(t\sqrt{\kappa||v||_\cg})   \Big)dt   =0
\end{eqnarray*}
that is $e$ and $\eta(t_0) $ are conjugate. 

Benn in \cite{Benn2021} developed this method for time-dependent solutions $u$. Tauchi and Yoneda \cite{Tauchi-Yoneda-2} proved that the above machinery works for manifolds $G$ (possibly infinite-dimensional) which have a topological group structure and a right-invariant metric. In this case, the Misiolek criterion (called Misiolek curvature in \cite{Tauchi-Yoneda-2}) is given by
\begin{eqnarray*}
{MC}\big( u,v \big)   
&=&  - \ll  [u,v] , [u,v]\gg   -   \ll  [[u,v],  v ],u \gg_\cg\\
&=&   \ll  ad_uv , [u,v]\gg   +   \ll  ad_{[u,v]}  v ,u \gg_\cg\\
&=&   \ll  v , ad_u^*[u,v]\gg   +   \ll    v ,ad_{[u,v]}^*u \gg_\cg\\
&=&   \ll   ad_u^*[u,v]    +       ad_{[u,v]}^*u   ,  v\gg_\cg
\end{eqnarray*}
where  $[,]$ is the Lie bracket on $\cg:=T_eG$ and $\ll  .~ ,~ .\gg_\cg$  is the inner product on $\cg$ which generates the right invariant metric on $G$. The above expression on $T_e\cD^s_{vol}(M)$ reduces to the original criterion \eqref{MC original}.

It is not difficult to see that 
\[
MC(u,v)= \ll R(v,u)u,v\gg_\cg - ||\nabla_uv||_\cg^2
\]   
As a result positivity of the Misiolek criterion  implies that the (sectional) curvature is also positive. 

Intuitively,  existence of conjugate points means that if we perturb the geodesic generated by $u$ in the direction of $v$, the perturbed geodesic remains infinitesimally close to the original one. Because of that, the existence of conjugate points is a sign of (Lagrangian nonlinear) stability.

The same argument hold true for   the one dimensional central extension $\widehat{\mathcal{D}^s_{vol}}(\Sph^2)$ 
as presented in \cite{Tauchi-Yoneda-2}. 
%
%%%%%%%%%%%%%%%%%%%%%%%%%        Def MC for right invariant groups     %%%%%%%%%%%%%%%%%%%%%%%%%%%%%%%%%%%%%%%%%%%%%%%%%%%%
%

Let $(u,a)$ be a  solution of the Euler-Arnold equation. As a result, we state the following definition according to lemma B.6 from \cite{Tauchi-Yoneda-2}.
\begin{Def}
For $(u,a),(v,b)\in\hat\cg=\widehat{\mathcal{D}^s_q}(\Sph^3)$ the Misiolek criterion   is given by
\begin{eqnarray}\label{MC from Tauchi-Yoneda}
\nonumber\widehat{MC}\big( {(u,a),(v,b)} \big) &=&  - \ll  (w,d) ,(w,d) \gg_{\hat\cg}   -   \ll  [  (w,d) ,(v,b)],(u,a) \gg_{\hat\cg} \\
\nonumber&=&      \ll  \hat{ad}_{(u,a)}^*(w,d)     +  \hat{ad}_{(w,d)}^*(u,a)  ,  (v,b)      \gg_{\hat\cg} \\
&=&  \ll  \hat\nabla_{(u,a)}[(u,a),(v,b)] + \hat\nabla_{[(u,a),(v,b)]}(u,a) , (v,b) \gg_{\hat\cg}. 
\end{eqnarray}
where $(w,d)=([u,v],\Omega(u,v) )$.    
\end{Def}  
The same argument holds true for    $\widehat{\mathcal{D}^s_{vol}}(\Sph^2)$.

%%%%%%%%%%%%%%%%%%%%%%%%%%%%%%%%%%%%%%%%%%%%%%%%%%%%%%%%%%%%%%%%%%%%%%%%%%%%%%%%%%%%%%%%%%%%%%%%%%%%%%%%%%%%%%%%%%%%%%%%%%%%%%%%%%%%%%%%%%%%%%%%
%However
%
%\begin{eqnarray*}
% - \ll  (w,d) ,(w,d) \gg_{\hat\cg}  &=&  \ll \hat{ad}_{(u,a)}(v,b) , (w,d)\gg_{\hat\cg} \\
%
%&=&   \ll  (v,b) , \hat{ad}_{(u,a)}^*(w,d)\gg_{\hat\cg}
%\end{eqnarray*}
%
%and 
%
%\begin{eqnarray*}
%-   \ll  [  (w,d) ,(v,b)],(u,a) \gg_{\hat\cg}   &=&     \ll  \hat{ad}_{(w,d)} (v,b),(u,a) \gg_{\hat\cg} \\
%
%&=&     \ll   (v,b)   ,    \hat{ad}_{(w,d)}^*(u,a) \gg_{\hat\cg} 
%\end{eqnarray*}
%where $(w,d)=([u,v],\Omega(u,v) )$. As a result of the above argument and equality \eqref{eq symmetric covariant derivative},  the criterion %\ref{MC from Tauchi-Yoneda} is given by
%
%\begin{eqnarray*}
%\widehat{MC}\big( {(u,a),(v,b)} \big)   &=&      \ll  \hat{ad}_{(u,a)}^*(w,d)     +  \hat{ad}_{(w,d)}^*(u,a)  ,  (v,b)      \gg_{\hat\cg} \\
%
%&=&  \ll  \hat\nabla_{(u,a)}[(u,a),(v,b)] + \hat\nabla_{[(u,a),(v,b)]}(u,a) , (v,b) \gg_{\hat\cg} 
%\end{eqnarray*}
%

%
%%%%%%%%%%%%%%%%%%%%%%%%%    end     Rem MC for right invariant groups       %%%%%%%%%%%%%%%%%%%%%%%%%%%%%%%%%%%%%%%%%%%%%%%%%%%%
%
%
The next lemma represents the Misiolek criterion according to the stream functions, and its second part will play a crucial role in subsequent sections of the paper.
%
%%%%%%%%%%%%%%%%%%%%%%%%%%%%%%%%%%%%%%%%%%        lemma MC  with stream function  %%%%%%%%%%%%%%%%%%%%%%%%%%%%%%%%%
%
%
\begin{Lem}\label{lemma MC on Dq}
For   $u=S_\theta f$ and $v=S_\theta g$ we have\\
\textbf{(a)}
\begin{eqnarray}\label{eqn Misiolek Curvature on Dsq}
\nonumber\widehat{MC}\big( {(u,a),(v,b)} \big) &=&  \ll  \hat\nabla_{(u,a)}[(u,a),(v,b)] + \hat\nabla_{[(u,a),(v,b)]}(u,a) , (v,b) \gg_{\hat\cg}\\
&=& - \langle \Delta_\theta \{f,g\}  ,  \{f,g\}  \rangle   -  \langle  \{g,\Delta_\theta f\}  ,  \{f,g\}  \rangle  \\
\nonumber && -   \langle  \{\mu,f\}  ,  g  \rangle^2  -  a  \langle \{\mu, \{f,g\}\}  ,  g  \rangle.
\end{eqnarray}
\textbf{(b)} If $E_1 f=E_1g=0$ then,
\begin{eqnarray}\label{eqn Misiolek Curvature on S2}
\widehat{MC}\big( {(u,a),(v,b)} \big)  &=&  \langle \Delta \{f,g\}  ,  \{f,g\}  \rangle   -  \langle  \{\Delta f,g\}  ,  \{f,g\}  \rangle  \\
\nonumber&& -   \langle  \{\mu,f\}  ,  g  \rangle^2  -  a  \langle \{\mu, \{f,g\}\}  ,  g  \rangle.
\end{eqnarray}
\end{Lem}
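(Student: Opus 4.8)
The plan is to start from the third expression for $\widehat{MC}$ in Definition \ref{MC from Tauchi-Yoneda}, namely
\[
\widehat{MC}\big((u,a),(v,b)\big) = -\ll (w,d),(w,d)\gg_{\hat\cg} - \ll [(w,d),(v,b)],(u,a)\gg_{\hat\cg},
\]
with $(w,d) = ([u,v],\Omega(u,v)) = (S_\theta\{f,g\},\Omega(u,v))$, and to unwind each term using only (i) the formula \eqref{Inner prod hat S3} for $\ll\cdot,\cdot\gg_{\hat\cg}$, (ii) the Lie bracket formula $[(u,a),(v,b)] = (S_\theta\{f,g\},\Omega(u,v))$, and (iii) the fact that $S_\theta$ is a Lie algebra morphism together with the adjointness $\ll S_\theta p, S_\theta q\gg = \langle S_\theta^* S_\theta p, q\rangle = \langle \Delta_\theta p, q\rangle$. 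Since $\phi(\lambda,\mu)=\mu$ in the $\beta$-plane model, $\Omega(u,v) = \langle\{\mu,f\},g\rangle$ after one integration by parts (using that $\{\mu,\cdot\}$ is skew and $S_\theta\mu$ is divergence-free), so $d = \langle\{\mu,f\},g\rangle$ is a scalar, and likewise I will need $\Omega([u,v],v) = \langle\{\mu,\{f,g\}\},g\rangle$.

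\textbf{Key steps in order.} First I would expand $-\ll(w,d),(w,d)\gg_{\hat\cg} = -\ll S_\theta\{f,g\},S_\theta\{f,g\}\gg - d^2 = -\langle \Delta_\theta\{f,g\},\{f,g\}\rangle - \langle\{\mu,f\},g\rangle^2$; this produces the first and third summands of part (a). Second, I would compute the bracket $[(w,d),(v,b)] = (S_\theta\{\{f,g\},g\}, \Omega(S_\theta\{f,g\},S_\theta g))$ and pair it against $(u,a) = (S_\theta f, a)$: the $\cg$-part gives $\ll S_\theta\{\{f,g\},g\}, S_\theta f\gg = \langle \Delta_\theta\{\{f,g\},g\}, f\rangle$, which by adjointness of $\Delta_\theta$ and the Jacobi/skew-symmetry identities for the contact Poisson bracket I would rewrite as $\langle\{g,\Delta_\theta f\},\{f,g\}\rangle$ (matching the second summand), and the $\mathbb{R}$-part gives $a\cdot\Omega(S_\theta\{f,g\},S_\theta g) = a\langle\{\mu,\{f,g\}\},g\rangle$ up to sign (the fourth summand). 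Collecting signs yields part (a). For part (b), I would invoke the Remark: when $E f = E g = 0$ (here written $E_1 f = E_1 g = 0$), $\Delta_\theta$ descends to $\alpha^2 - \Delta$ on $\Sph^2$, and crucially $\langle\Delta_\theta\{f,g\},\{f,g\}\rangle - \langle\{g,\Delta_\theta f\},\{f,g\}\rangle = \langle(\alpha^2-\Delta)\{f,g\},\{f,g\}\rangle - \langle\{g,(\alpha^2-\Delta)f\},\{f,g\}\rangle$; the two $\alpha^2$-contributions are $\alpha^2\langle\{f,g\},\{f,g\}\rangle - \alpha^2\langle\{g,f\},\{f,g\}\rangle = 2\alpha^2\|\{f,g\}\|^2$... so I must be careful: in fact the correct cancellation is that $\langle\{g,f\},\{f,g\}\rangle = -\|\{f,g\}\|^2$, and one checks the $\alpha^2$ terms combine with a sign so that only $-\langle\Delta\{f,g\},\{f,g\}\rangle + \langle\{\Delta f,g\},\{f,g\}\rangle$ survives with the stated signs. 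I would verify this reduction term by term rather than assert it.

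\textbf{Main obstacle.} The delicate point is the bookkeeping of signs and integrations by parts in two places: the reduction of $\langle \Delta_\theta\{\{f,g\},g\}, f\rangle$ to the form $\langle\{g,\Delta_\theta f\},\{f,g\}\rangle$, which requires moving $\Delta_\theta$ across the $\cL^2$ pairing (self-adjointness), then using the cyclic-type identity $\langle\{p,q\},r\rangle = \langle\{q,r\},p\rangle$ valid because $S_\theta p$ is volume-preserving, and finally the antisymmetry $\{f,g\} = -\{g,f\}$; and the analogous step producing $a\langle\{\mu,\{f,g\}\},g\rangle$, where one must track whether $\Omega$'s defining $\phi$ sits on the correct argument and confirm the sign convention for the Poisson bracket \eqref{poisson bracket preston} does not flip the final answer (the Remark explicitly notes a different sign convention is harmless, which I would lean on). A secondary subtlety is justifying that $T u$ and the $\Omega$-cocycle interact correctly with the reduction $E_1 f = E_1 g = 0$, i.e.\ that passing from $\Sph^3$ to $\Sph^2$ commutes with all the pairings used; this is exactly the content of the Remark and of \cite{Suri-2023}, so I would cite it rather than reprove it.
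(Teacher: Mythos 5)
Your proposal is correct, and it reaches \eqref{eqn Misiolek Curvature on Dsq} by a slightly different route than the paper. You start from the first expression in \eqref{MC from Tauchi-Yoneda}, namely $-\ll (w,d),(w,d)\gg_{\hat\cg}-\ll [(w,d),(v,b)],(u,a)\gg_{\hat\cg}$, and evaluate both terms directly from the inner product \eqref{Inner prod hat S3} and the extended bracket: the first term immediately yields $-\langle\Delta_\theta\{f,g\},\{f,g\}\rangle-\langle\{\mu,f\},g\rangle^2$, the $\mathbb{R}$-component of the second term yields $-a\langle\{\mu,\{f,g\}\},g\rangle$ via $\Omega(w,v)=\langle\{\mu,\{f,g\}\},g\rangle$, and the $\cg$-component $-\langle\Delta_\theta\{\{f,g\},g\},f\rangle$ is converted to $-\langle\{g,\Delta_\theta f\},\{f,g\}\rangle$ by self-adjointness of $\Delta_\theta$ plus the cyclic identity $\langle\{p,q\},r\rangle=\langle\{q,r\},p\rangle$, which holds because the fields $S_\theta p$ are volume-preserving. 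The paper instead starts from the $\hat{ad}^*$-form of the criterion and leans on the previously established operator formulas: $ad^*_vu=S_\theta\Delta_\theta^{-1}\{g,\Delta_\theta f\}$ from \eqref{eq ad* on quantomorphism}, the symmetrized covariant derivative \eqref{eq symmetric covariant derivative}, and the operator $T$ with $\ll Tu,v\gg=\langle\{\mu,f\},g\rangle$ to handle the $d$- and $a$-terms. The two computations are equivalent (the integration by parts you invoke is exactly what underlies the paper's $ad^*$ formula), but your version is more self-contained at the level of the Poisson bracket and avoids $T$ and $\Delta_\theta^{-1}$ altogether, at the cost of having to justify the cyclic identity yourself; the paper's version recycles its earlier lemmas and so is shorter on the page. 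Your flagged worry about the $\alpha^2$-terms in part (b) resolves exactly as you suspected: with the signs coming out of part (a), those terms are $-\alpha^2\langle\{f,g\},\{f,g\}\rangle-\alpha^2\langle\{g,f\},\{f,g\}\rangle$, and antisymmetry of the bracket makes them cancel, leaving $\langle\Delta\{f,g\},\{f,g\}\rangle-\langle\{\Delta f,g\},\{f,g\}\rangle$ plus the Coriolis and cocycle terms, which is \eqref{eqn Misiolek Curvature on S2}; so the term-by-term verification you promise is routine and the argument closes.
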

%
%
%%%%%%%%%%%%%%%%%%%%%%%%%%%%%%%%%%%%%%%%%%%     Proof    %%%%%%%%%%%%%%%%%%%%%%%%%%%%%%%%%%%%%%%%%%%%%%%%
%
%
\begin{proof} \textbf{(a)}.
For $(u,a),(v,b)\in\hat{\cg}$ we have
\begin{eqnarray*}
\Omega(u,v)=\ll Tu,v\gg_\cg    =  \ll S_\theta\Delta_\theta^{-1}\{\mu,f\}  ,  S_\theta g\gg_\cg  =  \langle\{\mu,f\}  ,g \rangle
\end{eqnarray*}
which implies that $[(u,a),(v,b)]=(S_\theta\{f,g\}  , \langle  \{\mu,f\},g  \rangle ):=(w,d)$. Now, using equations (\ref{eq ad* on quantomorphism}) and (\ref{eq symmetric covariant derivative}) we get
\begin{eqnarray*}
\widehat{MC}\big( {(u,a),(v,b)} \big)  &=&  \ll  ( ad^*_uw  -  dTu  + ad^*_wu  - aTw  ,  0) , (v,b) \gg_{\hat\cg}\\
&=& \ll   ad^*_uw  + ad^*_wu   ,  v  \gg_{\cg}    -    \ll  dTu  + aTw   ,   v \gg_{\cg}.
\end{eqnarray*}
We note that
\begin{eqnarray*}
\ll   ad^*_uw  + ad^*_wu   ,  v  \gg_{\cg}  &=&   \ll ad^*_uw , v \gg_{\cg}     +     \ll ad^*_wu , v \gg_{\cg}\\
&=&  \ll w , ad_uv \gg_{\cg}     +     \ll u , ad_wv \gg_{\cg}\\
&=& - \ll w , [u,v] \gg_{\cg}     -     \ll u ,  ad_vw\gg_{\cg}\\
&=&  -  \ll S_\theta\{f,g\} ,  S_\theta\{f,g\}  \gg_{\cg}    -   \ll ad^*_vu ,  w\gg_{\cg}\\
&=&  -  \langle S_\theta^*S_\theta\{f,g\} ,  \{f,g\}  \rangle    -   \ll S_\theta\Delta_\theta^{-1}\{g,\Delta_\theta f\} ,  S_\theta\{f,g\}  \gg_{\cg}\\
&=&  -  \langle \Delta_\theta\{f,g\} ,  \{f,g\}  \rangle    -   \langle \{g,\Delta_\theta f\} ,  \{f,g\}  \rangle.
\end{eqnarray*}
Moreover  using the definition of the operator $T$ we have
\begin{eqnarray*}
\ll  dTu  + aTw   ,   v \gg_{\cg}  &=&  d  \ll    S_\theta \Delta_\theta^{-1}\{\mu,f\}   ,   S_\theta g \gg_{\cg} + a \ll S_\theta\Delta_\theta^{-1}\{\mu,\{f,g\}\}   ,   S_\theta g \gg_{\cg}\\
&=&   d \langle  \{\mu,f\}  ,  g\rangle   +  a \langle  \{\mu,\{f,g\}\} , g  \rangle\\
&=&   \langle  \{\mu,f\}  ,  g\rangle^2   +  a \langle  \{\mu,\{f,g\}\} , g  \rangle\\
\end{eqnarray*}
which completes the proof of part \textbf{(a)}.

\textbf{(b)}.  If $E_1 f=E_1g=0$ then the contact Laplacian reduces to $\Delta_\theta=\alpha^2-\Delta$  and consequently we have
\begin{eqnarray*}
\widehat{MC}\big( {(u,a),(v,b)} \big)  &=&  - \langle (\alpha^2-\Delta) \{f,g\}  ,  \{f,g\}  \rangle   -  \langle  \{g,(\alpha^2-\Delta) f\}  ,  \{f,g\}  \rangle  \\
&& -   \langle  \{\mu,f\}  ,  g  \rangle^2  -  a  \langle \{\mu, \{f,g\}\}  ,  g  \rangle\\
&=&  \alpha^2 \big[ - \langle  \{f,g\}  ,  \{f,g\}  \rangle   -  \langle  \{g,f\}  ,  \{f,g\}  \rangle\big]  \\
&&  + \langle  \Delta\{f,g\}  ,  \{f,g\}  \rangle   +  \langle  \{g,\Delta f\}  ,  \{f,g\}  \rangle\\
&& -   \langle  \{\mu,f\}  ,  g  \rangle^2  -  a  \langle \{\mu, \{f,g\}\}  ,  g  \rangle\\
&=&  \langle  \Delta\{f,g\}  ,  \{f,g\}  \rangle   -  \langle  \{\Delta f,g\}  ,  \{f,g\}  \rangle\\
&& -   \langle  \{\mu,f\}  ,  g  \rangle^2  -  a  \langle \{\mu, \{f,g\}\}  ,  g  \rangle.
\end{eqnarray*}
\end{proof}
%
%%%%%%%%%%%%%%%%%%%%%%%%%%%%%%%%%%%%%%%%%%%%%%%%%%%%%%%%%%%%%%        end proof      %%%%%%%%%%%%%%%%%%%%%%%%%%%%%%%%%%%%%%%%%%%%%%
%%%%%%%%%%%%%%%%%%%%%%%%%%%%%%%%%%%%%%%%%%%%%%%%%%%%%%%%%%%%%%        Remark and Corollary      %%%%%%%%%%%%%%%%%%%%%%%%%%%%%%%%%%%
%
%\begin{Rem}
%
In \cite{Tauchi-Yoneda-2}, Tauchi and Yoneda termed the preceding criterion as the 'Misiolek curvature' and introduced \eqref{eqn Misiolek Curvature on S2} for vector fields using a different approach. Specifically, \eqref{eqn Misiolek Curvature on S2} corresponds to a stream function version of equation (18) in \cite{Tauchi-Yoneda-2}. Additionally, Benn in \cite{Benn2021} presented a version of \eqref{eqn Misiolek Curvature on S2} for $\cg$, which can be derived from  \eqref{eqn Misiolek Curvature on S2}  by neglecting the central extension part.
%
%
%%%%%%%%%%%%%%%%%%%%%%%%%%%%%%%%%%%%%%%%%%%%%%%%    remark   b in MC     %%%%%%%%%%%%%%%%%%%%%%%%%%%%%%%%%%%%%%%%%%%%%%%%
%
%
\begin{Rem}
Looking at \eqref{eqn Misiolek Curvature on S2}, we notice that the real number $b$ in $(v, b) \in \hat\cg$ doesn't play a role in the criterion. However, it's worth mentioning that this parameter can affect the instant of the appearance of the conjugate point along $(u, a)$.

More precisely, suppose that there exists a $\kappa>0$ such that  $\widehat{MC}\big( {(u,a),(v,b)} \big)> \kappa  ||(v,b)||^3 >0$.  Set
\[
t_0:=\frac{\pi}{  \sqrt {\kappa ||(v,b)||}    }   \quad \textrm{and} \quad 
\psi(t):=\sin\Big(    t\sqrt {\kappa ||(v,b)||}   \Big).
\]
 Then, for the index $I_0^{t_0}$ \eqref{Index} we have 
\begin{eqnarray*}
&& I_0^{t_0}((v,b),(v,b)) \\
&&= \int_0^{t_0}  \Big(  \dot\psi(t)^2 ||(v,b)||^2  -  \psi(t)^2\big(   \widehat{MC}\big( {(u,a),(v,b)} \big)  \Big)dt\\
&&<     \int_0^{t_0}  \Big(  \dot\psi(t)^2 ||(v,b)||^2  -  \psi(t)^2 \kappa||(v,b)||^3 \Big)dt\\
&&=    \kappa||(v,b)||^3  \int_0^{t_0}  \Big(  \cos^2\Big(    t\sqrt {\kappa ||(v,b)||}   \Big)   -   \sin^2\Big(    t\sqrt {\kappa ||(v,b)||}   \Big)  \Big)ds\\
&&=  0
\end{eqnarray*}
As a result,  for the Jacobi field $J(t)=\psi(t)(v,b)\hat\eta(t)$ there exist $0<t_c\leq t_0$ such that $J(0)=J(t_c)=0$. 

Clearly, changing the parameter $b$ in $(v, b) \in \hat\hat\cg$ can alter the time $t_0$ for a fixed $(u, a) \in \hat\cg$. Specifically, when $b$ has a larger absolute value, $t_0$ increases.
\end{Rem}

%The criterion (\ref{eqn Misiolek Curvature on S2}) is comparable with equation (18) in Proposition 3.2 of \cite{Tauchi-Yoneda-2}. Despite of the different approaches, %equation (\ref{eqn Misiolek Curvature on S2}) is the stream function version of Proposition 3.2 in \cite{Tauchi-Yoneda-2}.
%\end{Rem}
%
%
\begin{cor}\label{Cor MC of zonal flow} In the case that  $f$ is a zonal flow i.e. $f=f(\mu)$ then, $\{\mu,f\}=0$  and (\ref{eqn Misiolek Curvature on S2}) reduces to
\begin{equation}\label{eqn Misiolek Curvature on S2 reduced}
\widehat{MC}\big( {(u,a),(v,b)} \big)  =  \langle \Delta \{f,g\}  ,  \{f,g\}  \rangle   -  \langle  \{\Delta f,g\}  ,  \{f,g\}  \rangle
+  a  \langle \frac{\partial}{\partial\lambda} \{f,  g\}  ,  g  \rangle.
\end{equation}
\end{cor}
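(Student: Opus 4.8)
The plan is to specialize Lemma~\ref{lemma MC on Dq}(b) to a zonal stream function $f=f(\mu)$ and simplify the two terms that involve the Coriolis parameter $\mu$. The only substantive input beyond part (b) of the lemma is that a function depending on $\mu$ alone Poisson-commutes with $\mu$ itself; everything else is bookkeeping with the bracket \eqref{poisson bracket preston} and one integration by parts on $\Sph^2$.

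\begin{proof}
Assume $f=f(\mu)$. From the coordinate expression \eqref{poisson bracket preston} for the Poisson bracket we have, for any $h$,
\[
\{\mu,h\}=\frac{\partial\mu}{\partial\lambda}\frac{\partial h}{\partial\mu}-\frac{\partial\mu}{\partial\mu}\frac{\partial h}{\partial\lambda}=-\frac{\partial h}{\partial\lambda}.
\]
In particular, taking $h=f$ and using $\partial f/\partial\lambda=0$ gives $\{\mu,f\}=0$, so the term $\langle\{\mu,f\},g\rangle^2$ in \eqref{eqn Misiolek Curvature on S2} vanishes. It remains to treat the last term $-a\,\langle\{\mu,\{f,g\}\},g\rangle$. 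Applying the identity above with $h=\{f,g\}$ yields
\[
\{\mu,\{f,g\}\}=-\frac{\partial}{\partial\lambda}\{f,g\},
\]
hence
\[
-a\,\langle\{\mu,\{f,g\}\},g\rangle = a\,\Big\langle\frac{\partial}{\partial\lambda}\{f,g\},\,g\Big\rangle,
\]
which is exactly the Coriolis term in \eqref{eqn Misiolek Curvature on S2 reduced}. Substituting these two simplifications into \eqref{eqn Misiolek Curvature on S2} gives the claimed formula.
\end{proof}

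The one point that warrants care — and the only place where anything could go wrong — is the sign in the last term: whether $\{\mu,\cdot\}$ equals $-\partial_\lambda$ or $+\partial_\lambda$ depends on the sign convention in \eqref{poisson bracket preston}, and the Remark after \eqref{poisson bracket preston} already flags that a different convention would not affect the final results. With the stated convention $\{f,g\}=\partial_\lambda f\,\partial_\mu g-\partial_\mu f\,\partial_\lambda g$ one does get the minus sign, so that the two minus signs in $-a\,\langle\{\mu,\{f,g\}\},g\rangle$ combine to the displayed $+a\,\langle\partial_\lambda\{f,g\},g\rangle$. No integration by parts is actually needed here; the main obstacle, such as it is, is purely keeping the sign convention straight.
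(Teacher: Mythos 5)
Your proof is correct and is exactly the immediate specialization the paper intends: with the convention \eqref{poisson bracket preston} one has $\{\mu,h\}=-\partial_\lambda h$, so $\{\mu,f\}=0$ for $f=f(\mu)$ and $-a\langle\{\mu,\{f,g\}\},g\rangle=+a\langle\partial_\lambda\{f,g\},g\rangle$, which is all the corollary requires. The only (harmless) blemish is that your preamble announces an integration by parts that, as you yourself note at the end, is never needed.
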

%
%
%
%%%%%%%%%%%%%%%%%%%%%%%%%%%%%%%%%%%%%%%%%%%%%%%%%%%%%     Spherical harmonics and structure constants     %%%%%%%%%%%%%%%%%%%%%%%%%%%%%%%%%%%%%%%%%%%%%%%%%
%%%%%%%%%%%%%%%%%%%%%%%%%%%%%%%%%%%%%%%%%%%%%%%%%%%%%     Spherical harmonics and structure constants     %%%%%%%%%%%%%%%%%%%%%%%%%%%%%%%%%%%%%%%%%%%%%%%%%
%%%%%%%%%%%%%%%%%%%%%%%%%%%%%%%%%%%%%%%%%%%%%%%%%%%%%     Spherical harmonics and structure constants     %%%%%%%%%%%%%%%%%%%%%%%%%%%%%%%%%%%%%%%%%%%%%%%%%%
%
%
%
\subsection{Spherical harmonics and structure constants }
In this section, we will introduce spherical harmonics and structure constants, following the notations established in \cite{Arak-Sav, Edmonds, Mess, Dowker, Mozheng}. 
Due to the significant role that Wigner $3j$-symbols (or simply $3j$-symbols)  play in the quantum theory of angular momentum, there exists a vast literature discussing their properties. We will recall the concept of  $3j$-symbols, investigate their special closed forms for our purposes, and examine their connection to Clebsch-Gordon coefficients as detailed in \cite{Mess}, \cite{Edmonds}, and \cite{Varsha} for further investigations.

Note that, when transitioning from the framework of a flat torus, as discussed in \cite{Arnold66, Misiolek99, Preston-Brig}, to a sphere, it becomes natural (and necessary) to work with structure constants and $3j$ symbols.

Arakelyan and Savvidy, in their work \cite{Arak-Sav}, offer an approach for computing structure constants. However, it's worth noting that the notation employed by Dowker in \cite{Dowker} is more efficient, and we will adopt that notation for our purposes.\\
The first reference known to the author that contains a formula for  structure constants is Jones \cite{Jones}. For further insight  and discussion on this topic,  see also Chapter 2 of \cite{Mozheng}.

Consider the complex spherical harmonic $Y_{lm}:\Sph^2\rightarrow \C$ where
\begin{equation*}
Y_{lm}(\lambda,\mu)=C^m_l P^{|m|}_l(\mu)e^{im\lambda}
\end{equation*}
and for the integers  $0\leq |m|\leq l$ the coefficient  $C^m_l=(-1)^{m}\sqrt{   \frac{2l+1}{4\pi}\frac{(l-|m|)!}{(l+|m| )!}  }$.
$\{Y_{lm}\}$ are eigenfunctions of  the Laplacian with $\L Y_{lm}=-l(l+1)Y_{lm}$. Following the notations of \cite{Mess} the associated Legendre polynomial is given by
\begin{equation*}
P^{|m|}_l(\mu)=\frac{  {(1-\mu^2)}^{\frac{m}{2}}   }{2^ll!}  \frac{d^{l+|m|}}{d\mu^{l+|m|}}  (\mu^2-1)^l
\end{equation*}
with $-1\leq \mu\leq 1$ and $0\leq\lambda\leq 2\pi$. Moreover the complex conjugation is given by
$$Y_{lm}^*=(-1)^mY_{l-m}$$
and the formulas $P^{|m|}_l(-\mu)=(-1)^{l-|m|}P^{|m|}_l(\mu)$, $P^{-|m|}_l(\mu)=(-1)^m P^{|m|}_l(\mu)$ hold true.
Now suppose that $e_{lm}=\sgrad Y_{lm}$  where $\sgrad$ represents the skew gradient.    We will adopt the notation $\nabla^\perp$ for $\sgrad$ on $\Sph^2$. The  family of vector fields $\{e_{lm}\}$ form a basis for $\cg=T_e\D^s_{vol}(\Sph^2)$ where $e_{lm}=\nabla^\perp{Y}_{lm}$ and $\nabla^\perp Y_{lm}=(-\frac{\partial}{\partial\mu}Y_{lm} , \frac{\partial}{\partial\lambda}Y_{lm})$.
In this case for $a=\alpha=0$ the $\cL^2$-metric \eqref{Inner prod hat S3} reduces to
\begin{equation}\label{L2 metric}
\ll    \nabla^\perp f,  \nabla^\perp g     \gg_{ \cg} : = -\int_{\Sph^2} (\Delta f) gdA  =  -\int_{\Sph^2}f\Delta gdA.
\end{equation}
on $\cg$ where $dA$ represents the surface element of $\Sph^2$. In local coordinates we have
\[
\ll    \nabla^\perp f, \nabla^\perp g    \gg_\cg : = -\int_{-1}^{1}\int_0^{2\pi} f\Delta g d\lambda d\mu.
\]
 Moreover the Lie bracket on $ \cg$ is given by
\[
[\nabla^\perp f  , \nabla^\perp g]_{ \cg}:=    \nabla^\perp \{f  ,  g\}
\]
Due to the fact that
\begin{equation}\label{inner pproduct}
\langle   Y_{l_1m_1}  ,   Y_{l_2m_2}    \rangle  = \int_{-1}^{1}\int_0^{2\pi} Y_{l_1m_1}  Y_{l_2m_2} d\lambda d\mu  =  (-1)^{m_1} \delta^{l_1}_{l_2}\delta^{m_1}_{-m_2}
\end{equation}
we have
\begin{eqnarray*}
\ll   e_{l_1m_1}   ,    e_{l_2m_2}     \gg_{ \cg}    & = &     \ll  \nabla^\perp  Y_{l_1m_1}  ,   \nabla^\perp Y_{l_2m_2}    \gg_{\cg}   \\
& = &  \langle    -\Delta  (Y_{l_1m_1})  ,   Y_{l_2m_2}    \rangle   \\
& = &  \langle     l_1(l_1+1)  Y_{l_1m_1}  ,   Y_{l_2m_2}    \rangle  \\
& = &     l_1(l_1+1)   (-1)^{m_1} \delta^{l_1}_{l_2}\delta^{m_1}_{-m_2}   .\\
\end{eqnarray*}
Note that when $e_{l_2m_2}$ is complex, or equivalently $m_2\neq 0$, we implicitly consider the inner product as below
\begin{eqnarray*}
\ll   e_{l_1m_1}   ,    e_{l_2m_2}     \gg_{ \cg}    =\ll   e_{l_1m_1}   ,    e_{l_2m_2}^*   \gg_{ \cg}.
\end{eqnarray*}
Let
\begin{equation}\label{structure constants}
\{Y_{l_1m_1},Y_{l_2m_2}\}:=G^{l_3m_3}_{l_1m_1l_2m_2} Y_{l_3m_3}
\end{equation}
or equivalently $[e_{l_1m_1},e_{l_2m_2}]:=G^{l_3m_3}_{l_1m_1l_2m_2} e_{l_3m_3} $ where we used the Einstein summation convention.
The real structure constants $g_{l_1 m_{1} l_{2} m_{2}}^{l_{3}m_{3}}$ are defined by
\begin{equation}\label{realstr constants}
G_{l 1 m_{1} l_2 m_{2}}^{l_{3} m_{3}}=-\mathrm{i}(-1)^{m_{3}} g_{l_1 m_{1} l_{2} m_{2}}^{l_{3}-m_{3}}
\end{equation}
and there are important properties for them \cite{Arak-Sav}. In particular
\begin{equation}\label{symmetry property for glm}
g_{l_1 m_{1} l_{2} m_{2}}^{l_{3} m_{3}}= g_{l_3 m_{3} l_{1} m_{1}}^{l_{2} m_{2}}=g_{l_2 m_{2} l_{3} m_{3}}^{l_{1} m_{1}},
\end{equation}
\begin{equation}\label{negative for glm}
g_{l_1 -m_{1} l_{2} -m_{2}}^{l_{3} -m_{3}}  =  - g_{l_1 m_{1} l_{2} m_{2}}^{l_{3} m_{3}},
\end{equation}
and
\begin{equation}\label{negative for changing lower positions glm}
g_{l_1 m_{1} l_{2} m_{2}}^{l_{3} m_{3}}  =  - g_{l_2 m_{2} l_{1} m_{1}}^{l_{3} m_{3}}
\end{equation}
Moreover the structure constant $g_{l_1 m_{1} l_{2} m_{2}}^{l_{3} m_{3}}$ vanishes if $l_1+l_2+l_3$ is an even number and $g_{l_1 m_{1} l_{2} m_{2}}^{l_{3} m_{3}}=0$ if $m_1+m_2+m_3 \neq 0$. For an explicit definition of $3j$-symbols using Racah formula see e.g. \cite{Mess}, page 1058, equation C21.\\
%
%
%
%%%%%%%%%%%%%%%%%%%%%%%%%%%%%%%%%%%%%    formula for 3j symbols    %%%%%%%%%%%%%%%%%%%%%%%%%%%%%%%%%%%%%%
%
%
We remind that the  Wigner $3j$-symbol
\begin{equation}\label{3j symbol}
\left(
\begin{array}{ccc}
l_1          &   l_2  &  l_3\\
m_1          &   m_2  &   m_3
\end{array}\right)
\end{equation}
is a real number which is zero if the  conditions  \textbf{ 1.} $m_1+m_2+m_3=0$,
\textbf{ 2.} $|l_1-l_2| \leq l_3 \leq l_1+l_2$  are \textbf{not} met. (For more details see   \cite{Mess}, appendix C, part I or \cite{Varsha} chapter 8).
According to \cite{Varsha}, chapter 8, section 2, the relation between $3j$-symbols and Clebsch- Gordon coefficients $C^{l_3m_3}_{l_1m_1l_2m_2}$ is given by
\begin{eqnarray}\label{3j and Clebsch Gordon}
\left(
\begin{array}{ccc}
l_1    &   l_2  &  l_3\\
m_1          &   m_2  &   m_3
\end{array}\right)  = (-1)^{l_3+m_3}\frac{1}{\sqrt{2l_3+1}}C^{l_3m_3}_{l_1-m_1l_2-m_2}
\end{eqnarray}
We will use the following useful closed forms for structure constants from \cite{Edmonds}. After modifying formula (3.7.11) from page 48 \cite{Edmonds} we get
\begin{eqnarray}\label{3j Edmond j1 m1}
&&  \left(
\begin{array}{ccc}
l_1    &   m  &  l_3\\
m_1          &  - m  &   -m_1+m
\end{array}\right)  =  \left(-1\right)^{l_1 -m_1} \Big(   \frac{\left(2 m \right)!}{  \left(l_1+l_3+m  +1\right)!   }\\
&&  \times  \frac{ \left(l_1+l_3 -m  \right)! \left(l_3- m_1 + m  \right)! \left({l_1} +{m_1} \right)!}{    \left(l_1-l_3 +m  \right)! \left(-l_1 + l_3 + m  \right)! \left(l_3 + m_1 -m    \right)! \left( l_1 -m_1 \right)!}  \Big)^{\frac{1}{2}}\nonumber
\end{eqnarray}
Moreover formula 3.7.15, page 49 of \cite{Edmonds} implies that if $J=l_1+l_2+l_3+1$ is even then
%
%This was verified in the maple program okt 11.2023 equations 64-74 output of function fda
%
\begin{eqnarray}\label{3j Edmond j1 1 j2 -1}
&&  \left(
\begin{array}{ccc}
l_1    &   l_2  &  l_3\\
1      &  - 1  &   0
\end{array}\right)  = (-1)^{\frac{J}{2}}  \, (\frac{J}{2}  )!\\
&&\frac{   \Big(   \frac{ (J +1 ) (J -2  l_3 ) (J -2 l_1 ) (J -2 l_2 -1 )}{l_1 (l_1 + 1 ) {l_2} \left(l_2+ 1 \right)}            \times             \frac{\left(J -2 {l_3} \right)! \left(J -2 {l_1} \right)! \left(J -2 {l_2} -2\right)!}{\left(J +1\right)!}  \Big)^{\frac{1}{2}}    }{    2 \left(\frac{J}{2}-{l_3} \right)! \left(\frac{J}{2}-{l_1} \right)! \left(\frac{J}{2}-{l_2} -1\right)!} \nonumber
\end{eqnarray}
and in the case that $J=l_1+l_2+l_3+1$ is odd, then the $3j$-symbols vanishes.\\
Note that  \eqref{3j Edmond j1 m1} and \eqref{3j Edmond j1 1 j2 -1} are nonzero if the triangle inequality are satisfied. Finally the following useful recursive relations 
\begin{equation}\label{3j Pres j1 1}
\left(
\begin{array}{ccc}
l_1    &   l_2  &  l_3\\
1      &  1     &  -2
\end{array}\right)    = \left(-1\right)^{l_1+l_2+l_3}   \frac{\left(l_1-l_2 \right)(l_1+l-2+1)}{  l_1(l_1+1)\big( l_3(l_3+1)-2 \big)   }
\left(
\begin{array}{ccc}
l_1    &   l_3  &  l_2\\
1      &  -1     &  0
\end{array}\right) .
\end{equation}
is from \cite{Pain} equation (57). 
Instead of the previous recursive relation, one may utilize equation (8) on page 253 of \cite{Varsha}, along with the interchange formula \eqref{3j and Clebsch Gordon}.

%
%
%
%
%%%%%%%%%%%%%%%%%%%%%%%%%%%%%%%%%%%%%%%%%%%%%   Dowker structure constants    %%%%%%%%%%%%%%%%%%%%%%%%
%
%
According to Dowker \cite{Dowker} the structure constants are given by
\begin{eqnarray}\label{Dowker's formula for str cts}
&&   g^{l_3m_3}_{l_1m_1l_2m_2}  = \frac{-1}{  \sqrt{4\pi}  }  L_{123}
\left(
\begin{array}{ccc}
l_1    &   l_2  &  l_3\\
m_1          &   m_2  &   m_3
\end{array}\right)
\left(
\begin{array}{ccc}
l_1    &   l_2  &  l_3\\
1      &   -1   &   0
\end{array}\right)
\end{eqnarray}
where
\begin{equation}\label{eq L123}
L_{123}=[ (2l_1 +1) (2l_2+1) (2l_3+1) l_1(l_1+1) l_2(l_2+1)]^{\frac{1}{2}}
\end{equation}
%
%
%%%%%%%%%%%%%%%%%%%%%%%%%%%%%%%%%%%%%%%%%%%%%%    begin Corollary    %%%%%%%%%%%%%%%%%%%%%%%%
%
%
\begin{cor}\label{Cor bounds for str cts}
If the parameter $l_3$ is \textbf{not}  between $|l_1-l_2|+1 \leq    l_3  \leq 1_1+l_2-1$ then $g^{l_3m_3}_{l_1m_1l_2m_2}$ vanishes.
\end{cor}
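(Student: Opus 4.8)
\textbf{Proof proposal for Corollary \ref{Cor bounds for str cts}.}

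The plan is to read off the vanishing of $g^{l_3m_3}_{l_1m_1l_2m_2}$ directly from Dowker's closed formula \eqref{Dowker's formula for str cts}, which expresses the structure constant as a (nonzero) multiple $\frac{-1}{\sqrt{4\pi}}L_{123}$ times the product of two Wigner $3j$-symbols, namely
\[
\left(\begin{array}{ccc} l_1 & l_2 & l_3\\ m_1 & m_2 & m_3\end{array}\right)
\quad\text{and}\quad
\left(\begin{array}{ccc} l_1 & l_2 & l_3\\ 1 & -1 & 0\end{array}\right).
\]
Since $L_{123}$ defined by \eqref{eq L123} is a positive real number (a product of strictly positive factors, as each $l_i\geq 1$ when the corresponding vector field $e_{l_im_i}$ is nontrivial), the structure constant vanishes precisely when at least one of the two $3j$-symbols vanishes. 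So the task reduces to determining when these symbols are forced to be zero by the selection rules.

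First I would invoke the triangle inequality selection rule for the $3j$-symbol, recalled just after \eqref{3j symbol}: the symbol with top row $(l_1,l_2,l_3)$ vanishes unless $|l_1-l_2|\leq l_3\leq l_1+l_2$. This already applies to both symbols in Dowker's formula since they share the same top row. Next, and this is the point that sharpens the naive triangle inequality to the claimed range $|l_1-l_2|+1\leq l_3\leq l_1+l_2-1$, I would use the parity selection rule: the structure constant $g^{l_3m_3}_{l_1m_1l_2m_2}$ vanishes when $l_1+l_2+l_3$ is even (this is stated explicitly in the paragraph following \eqref{symmetry property for glm}, and is also visible from \eqref{3j Edmond j1 1 j2 -1}, where the second $3j$-symbol $\left(\begin{smallmatrix} l_1 & l_2 & l_3\\ 1 & -1 & 0\end{smallmatrix}\right)$ vanishes exactly when $J=l_1+l_2+l_3+1$ is odd, i.e. when $l_1+l_2+l_3$ is even). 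Combining: if $l_3=|l_1-l_2|$ then $l_1+l_2+l_3$ is even (it equals $2\max(l_1,l_2)$), so the constant vanishes; likewise if $l_3=l_1+l_2$ then $l_1+l_2+l_3=2(l_1+l_2)$ is even and the constant vanishes. Hence the endpoints of the triangle-inequality range are excluded, and $g^{l_3m_3}_{l_1m_1l_2m_2}\neq 0$ forces $|l_1-l_2|+1\leq l_3\leq l_1+l_2-1$, which is the contrapositive of the claim.

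I would write this up in two short steps: (1) from \eqref{Dowker's formula for str cts} and positivity of $L_{123}$, vanishing of $g$ is equivalent to vanishing of one of the two $3j$-symbols; (2) the triangle rule gives $|l_1-l_2|\leq l_3\leq l_1+l_2$, and the parity rule removes the two endpoints because there $l_1+l_2+l_3$ is even. There is essentially no obstacle here — the only mild care needed is to note that the parity condition is exactly what kills the second factor $\left(\begin{smallmatrix} l_1 & l_2 & l_3\\ 1 & -1 & 0\end{smallmatrix}\right)$ at the extreme values of $l_3$, so the two $3j$-symbols together enforce the strict inequalities even though individually the first symbol $\left(\begin{smallmatrix} l_1 & l_2 & l_3\\ m_1 & m_2 & m_3\end{smallmatrix}\right)$ need not vanish at the endpoints. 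This is really a bookkeeping corollary of the selection rules already quoted, so the proof is a few lines.
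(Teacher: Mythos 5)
Your proposal is correct and follows essentially the same route as the paper: apply the triangle-inequality selection rule via Dowker's formula \eqref{Dowker's formula for str cts} to get $|l_1-l_2|\leq l_3\leq l_1+l_2$, then discard the two endpoints using the parity rule ($g^{l_3m_3}_{l_1m_1l_2m_2}=0$ when $l_1+l_2+l_3$ is even, since at $l_3=|l_1-l_2|$ or $l_3=l_1+l_2$ that sum is even). Your extra remark tracing the parity rule to the vanishing of the $\left(\begin{smallmatrix} l_1 & l_2 & l_3\\ 1 & -1 & 0\end{smallmatrix}\right)$ factor is a harmless refinement of the same argument.
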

\begin{proof}
If
$$   |l_1- l_2|    \leq      l_3     \leq    l_1    +    l_2              $$
is not satisfied, then the  $3j$-symbol \eqref{3j symbol} vanishes. Equivalently the  right hand side of  (\ref{Dowker's formula for str cts}) is zero  which implies that  $g^{l_3m_3}_{l_1m_1l_2m_2}=0$. On the other hand, if $l_3=l_1+l_2$, then $l_1+l_2+l_3=2l_3$ is even and the structure constant is zero too. The same holds true when $l_2=|l_1-l_2|$. As a consequence, if  $|l_1 - l_2| +1 \leq l_3 \leq l_1 +l_2-1$ is not satisfied then $g^{l_3m_3}_{l_1m_1l_2m_2}=0$.
\end{proof}
As a result of the previous corollary we  can write
\begin{eqnarray}\label{formula str cts reduced sum}
\{Y_{l_1m_1},Y_{l_2m_2}\}   &=&    \sum_{l_3 =|l_1-l_2| +1}^{l_1+l_2 -1} \sum_{m_3=-l_3}^{l_3}G^{l_3m_3}_{l_1m_1l_2m_2} Y_{l_3m_3}\\
&=&  -i  \sum_{l_3 =  |l_1-l_2| +1 }^{l_1+l_2 -1} \sum_{m_3=-l_3}^{l_3}   (-1)^{m_3}  g^{l_3-m_3}_{l_1m_1l_2m_2} Y_{l_3m_3}  \nonumber\\
&=&  -i (-1)^{m_1+m_2}   \sum_{l_3 = |l_1-l_2| +1}^{l_1+l_2-1}      g^{l_3  -(m_1+m_2)}_{l_1m_1l_2m_2} Y_{l_3 m_1+m_2}.  \nonumber
\end{eqnarray}
%
%
%
%%%%%%%%%%%%%%%%%%%%%%%%%%%%%%%%%%%%%%%%%%%%%%%%%%%%%%%       Conjugate points and Misiolek criterion  %%%%%%%%%%%%%%%%%%%%%%%%%%%%%%%%%%%%%%%%%%%%%%%%%%%%%%%%%%
%%%%%%%%%%%%%%%%%%%%%%%%%%%%%%%%%%%%%%%%%%%%%%%%%%%%%%%       Conjugate points and Misiolek criterion  %%%%%%%%%%%%%%%%%%%%%%%%%%%%%%%%%%%%%%%%%%%%%%%%%%%%%%%%%%
%%%%%%%%%%%%%%%%%%%%%%%%%%%%%%%%%%%%%%%%%%%%%%%%%%%%%%%       Conjugate points and Misiolek criterion  %%%%%%%%%%%%%%%%%%%%%%%%%%%%%%%%%%%%%%%%%%%%%%%%%%%%%%%%%%
%
%
%
\section{  Conjugate points  along spherical harmonics on $\D^s_{vol}(\Sph^2)$}\label{Sect. CP S.H}
In this section, we will restate the Misiolek criterion from Lemma \ref{lemma MC on Dq}, using structure constants, specifically in the context where the Coriolis force is absent.
When there is no rotation, we can simply set $\phi=0$. As a result, for the velocity fields given by $u = \nabla^\perp f$ and $v = \nabla^\perp g \in \cg$, lemma \ref{lemma MC on Dq} part "b" implies that
\begin{eqnarray}\label{eqn Misiolek Curvature on Dsq}
\nonumber{MC}(u , v )  = \langle \Delta \{f,g\}  ,  \{f,g\}  \rangle   -  \langle  \{\Delta f,g\}  ,  \{f,g\}  \rangle.
\end{eqnarray}
%
%
%
%%%%%%%%%%%%%%%%%%%%%%%%%%%%%%%%%%%%%%%%%%%%%%%%%%%%%%    Prop Mis Curv e    %%%%%%%%%%%%%%%%%%%%%%%%%%%%%%%%%%%
%
%
\begin{prop}\label{Prop Mis Curv e}
For $f=Y_{l_1m_1}$, $g=Y_{l_2m_2}$  we have
\begin{eqnarray}\label{MC with str cts}
{MC}(  e_{l_1m_1} , e_{l_2m_2} )  &=& \sum_{l_3=  |l_1-l_2| +1  }^{l_1+l_2-1}   \sum_{m_3=-l_3}^{l_3}    \Big( g^{l_3-m_3}_{l_1m_1l_2m_2} \Big)^2 \Big( l_1(l_1+1)  -   l_3(l_3+1)  \Big)\nonumber \\
&=&  \sum_{l_3=  |l_1-l_2| +1  }^{l_1+l_2-1}     \Big( g^{l_3-(m_1+m_2)}_{l_1m_1l_2m_2} \Big)^2 \Big( l_1(l_1+1)  -   l_3(l_3+1)  \Big)
\end{eqnarray}
\end{prop}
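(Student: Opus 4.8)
The plan is to start from the stream-function version of the Misiolek criterion established in Lemma \ref{lemma MC on Dq}(b) with $a=0$ (equivalently $\phi=0$), namely
\[
MC(e_{l_1m_1},e_{l_2m_2}) = \langle \Delta\{f,g\},\{f,g\}\rangle - \langle\{\Delta f,g\},\{f,g\}\rangle,
\]
with $f=Y_{l_1m_1}$ and $g=Y_{l_2m_2}$, and simply expand both inner products using the structure-constant expansion \eqref{formula str cts reduced sum}. First I would rewrite $\{f,g\}=\{Y_{l_1m_1},Y_{l_2m_2}\}=\sum_{l_3,m_3}G^{l_3m_3}_{l_1m_1l_2m_2}Y_{l_3m_3}$, where by Corollary \ref{Cor bounds for str cts} the sum over $l_3$ runs only from $|l_1-l_2|+1$ to $l_1+l_2-1$, and over $m_3$ with the constraint $m_3=-(m_1+m_2)$ forced by $g^{l_3m_3}_{l_1m_1l_2m_2}=0$ unless $m_1+m_2+m_3=0$. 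For the first term, apply $\Delta Y_{l_3m_3}=-l_3(l_3+1)Y_{l_3m_3}$ to get $\Delta\{f,g\}=-\sum_{l_3,m_3}l_3(l_3+1)G^{l_3m_3}_{l_1m_1l_2m_2}Y_{l_3m_3}$, then pair against $\{f,g\}$ using the orthogonality relation \eqref{inner pproduct}, $\langle Y_{l_3m_3},Y_{l_3'm_3'}\rangle=(-1)^{m_3}\delta^{l_3}_{l_3'}\delta^{m_3}_{-m_3'}$.

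The key subtlety is bookkeeping the orthogonality-induced sign and index-matching. When pairing $\sum_{l_3,m_3}A_{l_3m_3}Y_{l_3m_3}$ with $\sum_{l_3',m_3'}B_{l_3'm_3'}Y_{l_3'm_3'}$ one gets $\sum_{l_3,m_3}(-1)^{m_3}A_{l_3m_3}B_{l_3,-m_3}$; since $G^{l_3,-m_3}_{l_1m_1l_2m_2}$ is related to $G^{l_3m_3}_{l_1m_1l_2m_2}$ by complex conjugation / the reality relations \eqref{realstr constants} and \eqref{negative for glm}, the product $(-1)^{m_3}G^{l_3m_3}_{l_1m_1l_2m_2}G^{l_3,-m_3}_{l_1m_1l_2m_2}$ should collapse to $\big(g^{l_3-m_3}_{l_1m_1l_2m_2}\big)^2$. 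Concretely, using $G^{l_3m_3}_{l_1m_1l_2m_2}=-i(-1)^{m_3}g^{l_3-m_3}_{l_1m_1l_2m_2}$ and the fact that $g^{l_3,-m_3}_{l_1m_1l_2m_2}$ relates to $g^{l_3m_3}_{l_1m_1l_2m_2}$ with a controlled sign, one checks $(-1)^{m_3}G^{l_3m_3}G^{l_3,-m_3}=\big(g^{l_3-m_3}_{l_1m_1l_2m_2}\big)^2$, which is exactly what makes the quadratic form real and positive-definite-looking. For the second term, write $\Delta f=\Delta Y_{l_1m_1}=-l_1(l_1+1)Y_{l_1m_1}$, so $\{\Delta f,g\}=-l_1(l_1+1)\{f,g\}$, hence $\langle\{\Delta f,g\},\{f,g\}\rangle=-l_1(l_1+1)\langle\{f,g\},\{f,g\}\rangle=-l_1(l_1+1)\sum_{l_3,m_3}\big(g^{l_3-m_3}_{l_1m_1l_2m_2}\big)^2$. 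Subtracting the two terms yields $\sum_{l_3,m_3}\big(g^{l_3-m_3}_{l_1m_1l_2m_2}\big)^2\big(l_1(l_1+1)-l_3(l_3+1)\big)$, and collapsing the $m_3$-sum via $m_3=-(m_1+m_2)$ gives the second displayed form.

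The main obstacle I anticipate is not conceptual but notational: getting every factor of $(-1)^{m_3}$, $(-i)$, and the reality-relation signs to cancel correctly so that the cross terms genuinely combine into $\big(g^{l_3-m_3}_{l_1m_1l_2m_2}\big)^2$ with the right (positive) sign, rather than leaving a stray sign or an imaginary residue. It is worth double-checking the definition of the inner product being used for complex $e_{l_2m_2}$ (the convention $\langle e_{l_1m_1},e_{l_2m_2}\rangle=\langle e_{l_1m_1},e_{l_2m_2}^*\rangle$ noted just after \eqref{inner pproduct}) since the criterion $MC$ should come out real; once that convention is fixed, the orthogonality relation \eqref{inner pproduct} and the reality relations \eqref{realstr constants}–\eqref{negative for changing lower positions glm} do all the work, and the key observation $\Delta\{f,g\}$ acts diagonally while $\{\Delta f,g\}$ is just a scalar multiple of $\{f,g\}$ is what produces the eigenvalue difference $l_1(l_1+1)-l_3(l_3+1)$. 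No estimate or hard inequality is needed for this proposition; it is a pure identity.
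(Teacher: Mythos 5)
Your strategy is the same as the paper's: expand $\{f,g\}$ in structure constants, use that $\Delta$ acts diagonally on the $Y_{l_3m_3}$ while $\{\Delta f,g\}=-l_1(l_1+1)\{f,g\}$, and collapse the pairing by orthogonality, with the range of $l_3$ and the constraint $m_3=m_1+m_2$ supplied by Corollary \ref{Cor bounds for str cts} and the selection rule. However, the central bookkeeping identity you assert is false. Pairing $\{f,g\}$ with itself by the bilinear relation \eqref{inner pproduct} gives the coefficient $(-1)^{m_3}G^{l_3m_3}_{l_1m_1l_2m_2}G^{l_3\,-m_3}_{l_1m_1l_2m_2}$, and by \eqref{realstr constants} this equals $-(-1)^{m_3}\,g^{l_3\,-m_3}_{l_1m_1l_2m_2}\,g^{l_3\,m_3}_{l_1m_1l_2m_2}$, not $\big(g^{l_3-m_3}_{l_1m_1l_2m_2}\big)^2$. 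Since $g^{l_3m_3}_{l_1m_1l_2m_2}=0$ unless $m_1+m_2+m_3=0$, the two factors obey incompatible selection rules: their product vanishes unless $m_1+m_2=0$ (forcing $m_3=0$), and in that surviving case it equals $-\big(g^{l_3\,0}_{l_1m_1l_2m_2}\big)^2$, i.e.\ the wrong sign. Carried out literally, your computation therefore returns ${MC}=0$ whenever $m_1+m_2\neq 0$, not the stated formula; the same defect hits your evaluation of $\langle\{f,g\},\{f,g\}\rangle$ in the second term.

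The repair is exactly the convention you flag but do not actually implement: the paper pairs $\{f,g\}$ against its complex conjugate, expanding the second copy as $\sum (G^{l_4m_4}_{l_1m_1l_2m_2})^*(Y_{l_4m_4})^*$ and using $\langle Y_{l_3m_3},(Y_{l_4m_4})^*\rangle=\delta^{l_3}_{l_4}\delta^{m_3}_{m_4}$ (which follows from \eqref{inner pproduct} and $Y^*_{lm}=(-1)^mY_{l-m}$). Then the coefficient is $G^{l_3m_3}_{l_1m_1l_2m_2}\overline{G^{l_3m_3}_{l_1m_1l_2m_2}}=|G^{l_3m_3}_{l_1m_1l_2m_2}|^2=\big(g^{l_3-m_3}_{l_1m_1l_2m_2}\big)^2\geq 0$, with no sign chase at all, only $m_3=m_1+m_2$ survives, and the eigenvalue difference $l_1(l_1+1)-l_3(l_3+1)$ appears exactly as you describe, yielding both lines of \eqref{MC with str cts}. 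With that single substitution of the Hermitian pairing for your bilinear one, the rest of your argument coincides with the paper's proof.
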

\begin{proof}
Since
\begin{equation*}
\{f,g\}=G^{l_3m_3}_{l_1m_1l_2m_2} Y_{l_3m_3}
\end{equation*}
we have
\begin{eqnarray*}
\langle \Delta \{f,g\}  ,  \{f,g\}  \rangle   &=&    -l_3(l_3+1)G^{l_3m_3}_{l_1m_1l_2m_2} ({G^{l_4m_4}_{l_1m_1l_2m_2}})^*   \langle Y_{l_3m_3}  ,  ({Y_{l_4m_4}})^*  \rangle\\
&=&  \sum_{l_3,m_3}  -l_3(l_3+1)G^{l_3m_3}_{l_1m_1l_2m_2} (G^{l_3m_3}_{l_1m_1l_2m_2})^*
\end{eqnarray*}
and
\begin{eqnarray*}
\langle  \{\Delta f,g\}  ,  \{f,g\}  \rangle  &=&  -l_1(l_1+1)G^{l_3m_3}_{l_1m_1l_2m_2}  (G^{l_4m_4}_{l_1m_1l_2m_2} )^*  \langle Y_{l_3m_3}  ,  (Y_{l_4m_4})^*  \rangle\\
&=& \sum_{l_3,m_3}  - l_1(l_1+1)G^{l_3m_3}_{l_1m_1l_2m_2} (G^{l_3m_3}_{l_1m_1l_2m_2})^*
\end{eqnarray*}
Using the real structure constants   we get
\begin{eqnarray*}
&&  {MC}( e_{l_1m_1} , e_{l_2m_2} )  =    \sum_{l-3,m_3} G^{l_3m_3}_{l_1m_1l_2m_2} (G^{l_3m_3}_{l_1m_1l_2m_2})^* \Big(  l_1(l_1+1)  -l_3(l_3+1)  \Big)\\
&=&  \sum_{l_3,m_3}   |G^{l_3m_3}_{l_1m_1l_2m_2} |^2 \Big(  l_1(l_1+1)  -l_3(l_3+1)  \Big)\\
&=& \sum_{l_3,m_3   } (-i)(-i)^* (g^{l_3-m_3}_{l_1m_1l_2m_2})^2   \Big(  l_1(l_1+1)  -l_3(l_3+1)  \Big) \\
&=& \sum_{l_3,m_3   }  (g^{l_3-m_3}_{l_1m_1l_2m_2})^2   \Big(  l_1(l_1+1)  -l_3(l_3+1)  \Big)
\end{eqnarray*}
which completes the proof.
\end{proof}
%
%%%%%%%%%%%%%%%%%%%%%%%%%%%%%%%%%%%%%%%%%%%%%%%%%%%%   end proof  %%%%%%%%%%%%%%%%%%%%%%%%%%%%%%%%%%%%%%%%%%%%%%%%%%
%
%
%
%%%%%%%%%%%%%%%%%%%%%%%%%%%%%%%%%%%%%%%%%%%%%%      Corollary  l_1=1  %%%%%%%%%%%%%%%%%%%%%%%%%%%%%%%%%%%%%%%%%%%%%%
%
%

\begin{cor}\label{Cor-MC-l1=1}
\textbf{i.} If $l_2=1$, then for any $l_1$, we have $l_1+l_2-1=|l_1-l_2|+1=l_1$, and consequently
\begin{eqnarray*}
{MC}(  e_{l_1m_1} , e_{1~m_2} )  &=& \sum_{l_3=  l_1  }^{l_1}       \Big( g^{l_3-m_3}_{l_1m_1~1~m_2} \Big)^2 \Big( l_1(l_1+1)  -   l_3(l_3+1)  \Big)\\
&=& \Big( g^{l_1-(m_1+m_2)}_{l_1~m_1~1~m_2} \Big)^2 \Big( l_1(l_1+1)  -   l_1(l_1+1)  \Big)=0
\end{eqnarray*}
the criterion \eqref{MC with str cts} vanishes. \\
\textbf{ii.} For $l_1=1$ and any $1\leq l_2$ we have
\begin{eqnarray*}
{MC}(  e_{1~m_1} , e_{l_2m_2} )  &=& \sum_{l_3=  l_2-1 +1  }^{l_2+1-1}       \Big( g^{l_3-m_3}_{1~m_1l_2m_2} \Big)^2 \Big( 2  -   l_3(l_3+1)  \Big)\\
&=& \Big( g^{l_2-(m_1+m_2)}_{1~m_1l_2m_2} \Big)^2 \Big( 2  -   l_2(l_2+1)  \Big) \leq 0
\end{eqnarray*}
Note that, in the special case $l_1=1$, the real and imaginary parts of the vector field $e_{1~1}$ are killing fields generated by (rigid) rotation and we already know that Misiolek criterion can not detect conjugate points. However, conjugate points along this field exist (see e.g. \cite{Preston-2023}).

%fields  As a result, when $l_1=1$, one may need to consider using nontrivial linear combinations of eigenfunctions in %the second component of the Misiolek criterion, as discussed in \cite{Brigant-Preston}.\\
%
%
\textbf{iii.}
For any two vector fields $e_{l_1m_1},e_{l_2m_2}$ we have
\begin{eqnarray*}
{MC}(  e_{l_1m_1} , e_{l_2m_2} )  &=& \sum_{l_3=  |l_1-l_2| +1  }^{l_1+l_2-1}       \Big( g^{l_3-(m_1+m_2)}_{l_1m_1l_2m_2} \Big)^2 \Big( l_1(l_1+1)  -   l_3(l_3+1)  \Big)\\
&=&   \sum_{l_3=  |l_1-l_2| +1  }^{l_1+l_2-1}       \Big( -g^{l_3-(-m_1-m_2)}_{l_1-m_1l_2-m_2} \Big)^2 \Big( l_1(l_1+1)  -   l_3(l_3+1)  \Big)\\
&=&  {MC}(  e_{l_1-m_1} , e_{l_2 -m_2} )
\end{eqnarray*}
\end{cor}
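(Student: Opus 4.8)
The proof of part \textbf{iii} is a short symmetry computation, so the plan is essentially to unwind the definition in \eqref{MC with str cts} and apply the sign rule \eqref{negative for glm}. First I would start from the second line of \eqref{MC with str cts}, namely
\[
{MC}(e_{l_1m_1},e_{l_2m_2}) = \sum_{l_3=|l_1-l_2|+1}^{l_1+l_2-1} \Big(g^{l_3-(m_1+m_2)}_{l_1m_1l_2m_2}\Big)^2\Big(l_1(l_1+1)-l_3(l_3+1)\Big),
\]
and observe that the range of summation over $l_3$ depends only on $l_1,l_2$ and is therefore unchanged if $m_1,m_2$ are replaced by $-m_1,-m_2$; likewise the weight $l_1(l_1+1)-l_3(l_3+1)$ does not see the $m$-indices at all. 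So the only thing to track is the structure constant factor.

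The key step is the identity \eqref{negative for glm}, which gives
\[
g^{l_3-(-m_1-m_2)}_{l_1-m_1l_2-m_2} = g^{l_3\,(m_1+m_2)}_{l_1-m_1l_2-m_2} = -\,g^{l_3-(m_1+m_2)}_{l_1m_1l_2m_2}.
\]
Squaring kills the sign, so each summand for the pair $(-m_1,-m_2)$ equals the corresponding summand for $(m_1,m_2)$, term by term in $l_3$. Summing over the (identical) range $l_3=|l_1-l_2|+1,\dots,l_1+l_2-1$ then yields
\[
{MC}(e_{l_1m_1},e_{l_2m_2}) = \sum_{l_3} \Big(g^{l_3-(-m_1-m_2)}_{l_1-m_1l_2-m_2}\Big)^2\Big(l_1(l_1+1)-l_3(l_3+1)\Big) = {MC}(e_{l_1-m_1},e_{l_2-m_2}),
\]
which is exactly the claim, and this is the chain of equalities already displayed in the statement of \textbf{iii}.

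There is essentially no obstacle here: the argument is a one-line application of \eqref{negative for glm} together with the observation that the $l_3$-range and the quadratic weight are insensitive to the signs of $m_1,m_2$. The only point requiring a moment's care is the bookkeeping with the upper index of the structure constant: one must note that negating $m_1,m_2$ forces the third magnetic index to negate as well (since the nonvanishing condition is $m_1+m_2+m_3=0$), so that the triple $(l_1,-m_1),(l_2,-m_2),(l_3,m_1+m_2)$ is precisely the all-negated version of $(l_1,m_1),(l_2,m_2),(l_3,-(m_1+m_2))$, putting us in the exact hypothesis of \eqref{negative for glm}. Once that is noted, the rest is immediate.
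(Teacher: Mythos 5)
Your argument for part \textbf{iii} is correct and is essentially the paper's own: the $l_3$-range and the weight $l_1(l_1+1)-l_3(l_3+1)$ in \eqref{MC with str cts} are blind to the signs of the magnetic indices, and the sign flip from \eqref{negative for glm} (applied, as you correctly note, to the fully negated triple with third index $m_1+m_2$) disappears upon squaring, giving term-by-term equality of the two sums.

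The only shortfall is coverage: the corollary also contains parts \textbf{i} and \textbf{ii}, which your proposal does not address. Both are immediate from the same formula \eqref{MC with str cts} together with Corollary \ref{Cor bounds for str cts}: if $l_2=1$ the admissible range $|l_1-1|+1\leq l_3\leq l_1+1-1$ collapses to the single value $l_3=l_1$, where the weight $l_1(l_1+1)-l_3(l_3+1)$ vanishes, so the criterion is zero; if $l_1=1$ the range collapses to $l_3=l_2$, and the single surviving term carries the non-positive weight $2-l_2(l_2+1)\leq 0$, so the criterion is $\leq 0$. Adding those two sentences would make the proposal complete, and then it coincides with the paper's treatment in full.
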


The Misiolek criterion is not generally linear. However, by utilizing the properties of the structure constants, we can state the following proposition.
%
%
%
%%%%%%%%%%%%%%%%%%%%%%%%%%%%%%%%%%%%%%%%%%%%%%%%%%    Prop Mis Curv linearity    %%%%%%%%%%%%%%%%%%%%%%%%%%%%%%%%%%%
%
%
%
\begin{prop}\label{Prop Mis Curv linearity}
Let $l_1,l_1^\prime, l_2, l_2^\prime\in\mathbb{N}$ and$x\in \mathbb{C}$  be arbitrary.\\
\textbf{i.}  For $m_2\neq m_2^\prime$   we have
\begin{equation*}
{MC}(  e_{l_1m_1} , e_{l_2m_2} + x e_{l_2^\prime m_2^\prime})  = {MC}(  e_{l_1m_1} , e_{l_2m_2} ) + |x|^2{MC}(  e_{l_1m_1} ,  e_{l_2^\prime m_2^\prime})
\end{equation*}
\textbf{ii.}  For $m_1\neq m_1^\prime$  we have
\begin{equation*}
{MC}(  e_{l_1m_1} + x e_{l_1^\prime m_1^\prime}, e_{l_2m_2} )  = {MC}(  e_{l_1m_1} , e_{l_2m_2} ) + |x|^2{MC}(  e_{l_1^\prime m_1^\prime} ,  e_{l_2 m_2})
\end{equation*}
\end{prop}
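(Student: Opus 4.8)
The plan is to expand everything via the structure-constant formula \eqref{MC with str cts} and exploit the orthogonality relation \eqref{inner pproduct}, which makes all the cross terms vanish because of the $\delta^{m_1}_{-m_2}$ factor. Concretely, for part \textbf{i.}, write $f = Y_{l_1 m_1}$ and $h = g + x k$ where $g = Y_{l_2 m_2}$ and $k = Y_{l_2' m_2'}$ with $m_2 \neq m_2'$. Since the Poisson bracket is bilinear, $\{f, h\} = \{f, g\} + x\{f, k\}$. By \eqref{formula str cts reduced sum}, $\{f, g\}$ is a linear combination of spherical harmonics $Y_{l_3,\, m_1+m_2}$ and $\{f, k\}$ is a linear combination of spherical harmonics $Y_{l_3',\, m_1+m_2'}$; since $m_1 + m_2 \neq m_1 + m_2'$, these two families involve disjoint sets of basis elements. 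Then I would substitute $\{f, h\}$ into the defining expression $MC(u,v) = \langle \Delta\{f,h\}, \{f,h\}\rangle - \langle\{\Delta f, h\}, \{f,h\}\rangle$ from the reduced form of Lemma \ref{lemma MC on Dq}(b), and expand the two inner products bilinearly into four pieces each.

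The key observation is that every mixed term of the form $\langle \Delta\{f,g\}, \{f,k\}\rangle$ (and likewise $\langle\{\Delta f, g\}, \{f,k\}\rangle$, and their conjugates) vanishes: $\Delta\{f,g\}$ and $\{\Delta f, g\}$ are still supported on the harmonics $Y_{l_3,\, m_1+m_2}$, while $\{f,k\}$ is supported on $Y_{l_3',\, m_1+m_2'}$, and by \eqref{inner pproduct} the pairing $\langle Y_{l_3,\, m_1+m_2}, Y_{l_3',\, m_1+m_2'}\rangle$ is zero unless $m_1 + m_2 = -(m_1 + m_2')^{\!*}$-type index matching holds — here, recalling the convention $\ll e, e'\gg = \ll e, e'^*\gg$ and $Y^*_{lm} = (-1)^m Y_{l,-m}$, the relevant condition becomes $m_1+m_2 = m_1 + m_2'$, i.e. $m_2 = m_2'$, which is excluded. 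Hence only the "diagonal" terms survive, giving $MC(e_{l_1m_1}, e_{l_2m_2} + xe_{l_2'm_2'}) = MC(e_{l_1m_1}, e_{l_2m_2}) + x\bar{x}\, MC(e_{l_1m_1}, e_{l_2'm_2'})$, and $x\bar x = |x|^2$. One should double-check that the scalar coming out of the cross-conjugate term $\bar x \langle \Delta\{f,g\},\{f,k\}\rangle + x\langle\Delta\{f,k\},\{f,g\}\rangle$ is indeed zero term-by-term, not merely as a sum; this follows from the same index-disjointness argument, so there is no cancellation subtlety.

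Part \textbf{ii.} is entirely symmetric: with $m_1 \neq m_1'$, write $f = Y_{l_1 m_1} + x Y_{l_1' m_1'}$, $g = Y_{l_2 m_2}$, and expand. The only new point is that now $\Delta$ acts on $f$ with two different eigenvalues $-l_1(l_1+1)$ and $-l_1'(l_1'+1)$, so $\{\Delta f, g\}$ splits as $-l_1(l_1+1)\{Y_{l_1m_1},g\} - x\, l_1'(l_1'+1)\{Y_{l_1'm_1'},g\}$; but the supports of $\{Y_{l_1m_1},g\}$ and $\{Y_{l_1'm_1'},g\}$ are again on disjoint $m$-indices ($m_1+m_2$ versus $m_1'+m_2$), so the cross terms in both inner products vanish exactly as before, and one reads off the claimed identity. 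The main (and only mild) obstacle is bookkeeping: keeping the complex conjugations straight in the implicit convention $\ll \cdot,\cdot\gg = \ll\cdot,\cdot^*\gg$ and verifying that the azimuthal-index disjointness genuinely forces each individual cross term to vanish rather than relying on a fortuitous global cancellation. Once that is pinned down, the proposition is immediate.
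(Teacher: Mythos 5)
Your proposal is correct and takes essentially the same route as the paper: the paper expands $MC(e_{l_1m_1}, e_{l_2m_2}+xe_{l_2'm_2'})$ via the structure-constant formula of Proposition \ref{Prop Mis Curv e} and kills the cross terms using the selection rule that $g^{l_3\,m_3}_{l_1m_1l_2m_2}$ vanishes unless $m_3=-(m_1+m_2)$, which is precisely your azimuthal-index disjointness/orthogonality argument expressed in inner-product language. The conjugation bookkeeping you flag is handled the same way there, and part \textbf{ii} is likewise dispatched by the identical mechanism.
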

\begin{proof}
Following the method of proposition \ref{Prop Mis Curv e}, we have
\begin{eqnarray*}
&&  {MC}(  e_{l_1m_1} , e_{l_2m_2} + x e_{l_2^\prime m_2^\prime})\\
&=&   \sum_{l_3,m_3}    \Big| g^{l_3-m_3}_{l_1m_1l_2m_2} +x g^{l_3-m_3}_{l_1m_1l_2^\prime m_2^\prime} \Big|^2 \Big( l_1(l_1+1)  -   l_3(l_3+1)  \Big).
\end{eqnarray*}
For $m_2\neq m_2^\prime$ the term containing $g^{l_3-m_3}_{l_1m_1l_2m_2} g^{l_3-m_3}_{l_1m_1l_2^\prime m_2^\prime}$ vanishes since
\begin{eqnarray*}
\sum_{m_3}    g^{l_3-m_3}_{l_1m_1l_2m_2} g^{l_3-m_3}_{l_1m_1l_2^\prime m_2^\prime}
&=&
g^{l_3-(m_1+m_2)}_{l_1m_1l_2m_2} g^{l_3-(m_1+m_2)}_{l_1m_1l_2^\prime m_2^\prime}
+
g^{l_3-(m_1+m_2^\prime)}_{l_1m_1l_2m_2} g^{l_3-(m_1+m_2^\prime)}_{l_1m_1l_2^\prime m_2^\prime}\\
&=&0
\end{eqnarray*}
which completes the proof of part \textbf{i}. Part \textbf{ii} can be proved with the same method.
\end{proof}
%
%%%%%%%%%%%%%%%%%%%%%%%%%%%%%%%%%%%%%%%%%%%%%%%%%%%%   end proof  %%%%%%%%%%%%%%%%%%%%%%%%%%%%%%%%%%%%%%%%%%%%%%%%%%%%%%
%
%
%
%
%
%
The next theorem ensures us that along any vector field $e_{l_1m_1}$ with $1<|m_1|\leq l_1$ conjugate points exist and as a result the corresponding geodesics are not global length minimizers.
%
%We know that positivity of ${MC}( e_{l_1m_1} , e_{l_2m_2} )$ results existence of conjugate points.
%
%%%%%%%%%%%%%%%%%%%%%%%%%%%%%%%%%%%%%%%%%%%%%%%%%%%%%%    Theorem   %%%%%%%%%%%%%%%%%%%%%%%%%%%%%%%%%%%%%%%%%%%%%%%%%%%%%%%%%%%%%%
%
\begin{The}\label{Theorem MC S.H.}
Let $l_1$ be a natural number and $m_1$ be an integer with $1<  |m_1| \leq l_1$. Then,\\
\textbf{i.} for any $2\leq m\leq m_1$ we have $MC(e_{l_1 m_1} ,e_{m-m})>0$. \\
\textbf{ii.} For any $3\leq l_1$ we have $MC(e_{l_1 1} ,e_{l_2~1})>0$  for any $2\leq l_2<l_1$. 
 \\
\end{The}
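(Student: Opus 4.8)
The plan is to reduce both statements to sign analysis of the quantity
\[
MC(e_{l_1 m_1},e_{l_2 m_2}) = \sum_{l_3 = |l_1-l_2|+1}^{l_1+l_2-1} \Big(g^{l_3\,-(m_1+m_2)}_{l_1 m_1 l_2 m_2}\Big)^2\big(l_1(l_1+1) - l_3(l_3+1)\big)
\]
established in Proposition \ref{Prop Mis Curv e}. Since $l_1(l_1+1)-l_3(l_3+1)$ is positive exactly when $l_3<l_1$, the strategy in each case is to show that the summand at $l_3 < l_1$ dominates—ideally by showing that only "small" values of $l_3$ contribute, or that the contributions from $l_3 > l_1$ are outweighed by those from $l_3 < l_1$. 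For part \textbf{i}, I take $f = Y_{l_1 m_1}$, $g = Y_{m\,-m}$ with $2\le m\le m_1$; here $l_2 = m$, so $l_3$ ranges over $l_1 - m + 1 \le l_3 \le l_1 + m - 1$, and the key observation is that the selection rule forces $m_3 = -(m_1 - m)$, i.e. $|m_3| = m_1 - m$. I would examine when the triangle inequality $|m_3| \le l_3$ together with the parity constraint ($l_1+l_2+l_3$ odd) actually admits terms; the hope is that for the relevant range the only surviving $l_3$ lie below $l_1$, or that an explicit evaluation via Dowker's formula \eqref{Dowker's formula for str cts} combined with the closed form \eqref{3j Edmond j1 m1} shows the lowest term $l_3 = l_1-m+1$ is nonzero while the high terms either vanish or are small.

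For part \textbf{ii}, I take $f = Y_{l_1 1}$, $g = Y_{l_2 1}$ with $2 \le l_2 < l_1$, so $m_1 + m_2 = 2$ and the sum runs over $l_1 - l_2 + 1 \le l_3 \le l_1 + l_2 - 1$. Now I would use the recursion \eqref{3j Pres j1 1} expressing $\left(\begin{smallmatrix} l_1 & l_2 & l_3 \\ 1 & 1 & -2\end{smallmatrix}\right)$ in terms of $\left(\begin{smallmatrix} l_1 & l_3 & l_2 \\ 1 & -1 & 0\end{smallmatrix}\right)$, together with Dowker's formula, to get a reasonably explicit expression for $g^{l_3\,-2}_{l_1 1 l_2 1}$. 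Plugging into the $MC$ sum and using \eqref{3j Edmond j1 1 j2 -1} for the remaining $3j$-symbol should reduce the positivity claim to an inequality among products of factorials and the factor $l_1(l_1+1) - l_3(l_3+1)$, which I would analyze by pairing the term at $l_3$ with the term at $2l_1 - l_3$ (reflection about $l_1$) when both lie in range, showing the net contribution of such a pair is positive because the $3j$-symbols decay fast enough. The edge terms near $l_3 = l_1 - l_2 + 1$ (all below $l_1$, contributing positively) should then close the argument.

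The main obstacle I anticipate is controlling the high-$l_3$ terms in both parts: the factor $l_1(l_1+1) - l_3(l_3+1)$ is negative there, so the whole sign hinges on showing those $3j$-symbols are genuinely smaller in squared magnitude than their low-$l_3$ counterparts, and this requires either sharp monotonicity of $3j$-symbols in the $l_3$ variable or a clever exact cancellation/telescoping. I would first look for a "miracle": perhaps for the specific magnetic quantum numbers here ($m_2 = -m$ in part i, $m_2 = 1$ in part ii) many of the high terms simply vanish by parity or by the triangle condition $|m_3| \le l_3$, which would make the sum manifestly positive with almost no estimation. Failing that, I would fall back on the explicit Racah/Edmonds closed forms and bound the ratio of consecutive squared $3j$-symbols. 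The exclusion of $Y_{1\pm1}$ and $Y_{2\pm1}$ in the abstract corresponds precisely to the degenerate cases $l_1 = 1$ (Corollary \ref{Cor-MC-l1=1}\textbf{ii}) and the boundary $l_2 = 1$ or $m = 1$ where the sum collapses to a single zero term, so I would make sure the hypotheses $1 < |m_1|$, $m \ge 2$, $l_2 \ge 2$ are used exactly to keep at least one strictly-positive term alive.
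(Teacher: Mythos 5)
Your setup is the same as the paper's: you start from the structure-constant expansion of Proposition \ref{Prop Mis Curv e}, note that the sign of $l_1(l_1+1)-l_3(l_3+1)$ is governed by whether $l_3<l_1$ or $l_3>l_1$, and name exactly the right ingredients (Dowker's formula \eqref{Dowker's formula for str cts}, the Edmonds closed forms \eqref{3j Edmond j1 m1} and \eqref{3j Edmond j1 1 j2 -1}, the recursion \eqref{3j Pres j1 1} for part ii, and pairing $l_3\leftrightarrow 2l_1-l_3$). You also correctly place the excluded cases $Y_{1\pm1}$, $Y_{2\pm1}$. However, there is a genuine gap: the decisive inequality is never established, only flagged as "the main obstacle". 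The shortcut you hope for first — that the high-$l_3$ terms vanish by selection rules — does not occur: since $|m_3|=m_1-m\le l_1-m<l_1-m+1\le l_3$ for every $l_3$ in the range, the constraint $|m_3|\le l_3$ is automatic, and the parity rule removes terms symmetrically on both sides of $l_1$, so nonzero negative contributions genuinely survive for all $m\ge 2$. The whole content of the theorem is therefore the quantitative comparison you defer.

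In the paper that comparison is carried out by forming, for each symmetric pair, the ratio of the squared structure constant at $l_3=l_1-j$ times its (positive) eigenvalue gap over the squared structure constant at $l_3=l_1+j$ times the (absolute value of the) negative gap — the quantities $h(l_1,m_1,m,k)$ for $m$ even and $f(l_1,m_1,m,k)$ for $m$ odd — then proving the base ratio exceeds $1$ and that the ratio is increasing in $k$, via explicit factorial manipulations from \eqref{3j Edmond j1 m1} and \eqref{3j Edmond j1 1 j2 -1} using $2\le m\le m_1\le l_1$ (and an analogous two-term computation with \eqref{3j Pres j1 1} for $MC(e_{l_1 1},e_{2\,1})$ in part ii). Nothing in your proposal substitutes for these estimates: "the $3j$-symbols decay fast enough across $l_3=l_1$" is precisely what must be proved, and it is not a standard monotonicity fact you can cite. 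Until you write out and verify those ratio bounds (or an equivalent pairwise domination argument), the positivity claims in both parts i and ii remain unproven; what you have is an accurate roadmap that coincides with the paper's strategy but stops short of its essential inequalities.
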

\begin{proof}
%
%%%%%%%%%%%%%%%%%%%%%%%%%%%%%%%%%%%%%%%%%          Proof              %%%%%%%%%%%%%%%%%%%%%%%%%%%%%%%%%%%%%%%%%%%%
%
%
%
For the proof or part \textbf{i.}, we consider two cases.\\
\textbf{Case 1.} Let $m$ be even. Then for $l_1-m+1\leq l_3 \leq l_1+m-1$  in $g^{l_3~-m_1+m}_{l_1m_1 m~-m}$  the summation $L:=l_1+m+l_3$ is $L=l_1+m+l_1\pm j=2l_1+m\pm j$ for some $j\in\mathbb{N}\cup\{0\}$. As a result
$g^{l_3~-m_1+m}_{l_1m_1 m~-m}$ is nonzero if $j=\pm (2k+1)$ where
\begin{eqnarray*}
l_1-m+1 \leq l_1-(2k+1)
\end{eqnarray*}
or equivalently $0\leq k\leq\frac{m-2}{2}=[\frac{m-1}{2}]$. The summation appeared in \eqref{MC with str cts} is symmetric with respect to the new variable $k$ that is
\begin{eqnarray*}
&&  {MC}(  e_{l_1m_1} , e_{m~-m} )  \\
&=& \sum_{k=0 }^{\frac{m-2}{2}}       \Big( g^{l_1-(2k+1)~-m_1+m}_{l_1m_1 m~-m} \Big)^2 \Big( l_1(l_1+1)  -   (l_1-2k-1)(l_1-2k)  \Big)\\
&& -  \Big( g^{l_1+(2k+1)~-m_1+m}_{l_1m_1 m~-m} \Big)^2 \Big( -l_1(l_1+1)  +   (l_1+2k+1)(l_1+2k+2)  \Big)\\
\end{eqnarray*}
We will prove that for any $k$
\begin{eqnarray*}
&&\Big( g^{l_1-(2k+1)~-m_1+m}_{l_1m_1m~-m} \Big)^2 \Big( l_1(l_1+1)  -   (l_1-2k-1)(l_1-2k)  \Big) >\\
&&\Big( g^{l_1+(2k+1)~-m_1+m}_{l_1m_1m~-m} \Big)^2 \Big( l_1(l_1+1)  -   (l_1+2k+1)(l_1+2k+2)  \Big)
\end{eqnarray*}
or equivalently for any $0\leq k+1\leq \frac{m-2}{2}$
\begin{equation*}
h(l_1,m_1,m,k):=\frac{  \Big( g^{l_1-(2k+1)~-m_1+m}_{l_1m_1m~-m} \Big)^2 \Big( l_1(l_1+1)  -   (l_1-2k-1)(l_1-2k)  \Big)  }
{\Big( g^{l_1+(2k+1)~-m_1+m}_{l_1m_1m~-m} \Big)^2 \Big( -l_1(l_1+1)  +   (l_1+2k+1)(l_1+2k+2)  \Big)}>1
\end{equation*}
In fact we show that
\begin{eqnarray*}
1< h(l_1,m_1,m,0) &<& \dots< h(l_1,m_1,m,k) <  h(l_1,m_1,m,k+1) \\
&<&  \dots <  h(l_1,m_1,m,\frac{m-2}{2}).
\end{eqnarray*}
which implies that ${MC}(  e_{l_1m_1} , e_{m~-m} )>0$.\\
Using equations \eqref{3j Edmond j1 m1}, \eqref{3j Edmond j1 1 j2 -1}, \eqref{Dowker's formula for str cts} and the fact that $2\leq m$ we observe that
\begin{eqnarray*}
&& h(l_1,m_1,m,0):=\frac{  \Big( g^{l_1-1~-m_1+m}_{l_1m_1m~-m} \Big)^2 \Big( l_1(l_1+1)  -   (l_1-1)l_1  \Big)  }
{\Big( g^{l_1+1~-m_1+m}_{l_1m_1m~-m} \Big)^2 \Big( -l_1(l_1+1)  +   (l_1+1)(l_1+2)  \Big)}\\
&=&\frac{  2l_1-1}{2l_1+3} \frac{  \left(
\begin{array}{ccc}
l_1          &   m  &  l_1-1\\
m_1          &   -m    &    -m_1+m
\end{array}\right)^2
\left(
\begin{array}{ccc}
l_1          &   m  &  l_1-1\\
1          &   -1    &    0
\end{array}\right)^2       }{       \left(
\begin{array}{ccc}
l_1          &   m  &  l_1+1\\
m_1          &   -m    &    -m_1+m
\end{array}\right)^2   \left(
\begin{array}{ccc}
l_1          &   m  &  l_1+1\\
1          &   -1    &    0
\end{array}\right)^2}       \frac{  l_1}{l_1+1}   \\
&=&\frac{   \left( 2 l_1  + m +1\right)^{2} \left(l_1 + m_1 -m \right) \left(l_1 + m_1 - m  +1 \right)   }{    \left(2 l_1 - m  +1\right)^{2} \left( l_1 -m_1 +m  \right) \left(l_1 -m_1 +m +1 \right) }\frac{ l_1 \left(2 l_1 -1\right)  }{ \left( l_1 +1 \right)\left(2 l_1 +3\right)  }\\
&\geq&
\frac{   \left( 2 l_1  + m +1\right)   }{    \left(2 l_1 - m  +1\right)   }   \frac{ l_1    }{ \left( l_1 +1 \right)  }>1\\
\end{eqnarray*}
Moreover for any $0\leq k+1\leq \frac{m-2}{2}$ we have 
\begin{eqnarray*}
&&  \frac{h(l_1,m_1,m,k+1)}{h(l_1,m_1,m,k)}\\
&=&  \frac{  \Big( g^{l_1-2k-3~-m_1+m}_{l_1m_1m~-m} \Big)^2 \Big( g^{l_1+2k+1~-m_1+m}_{l_1m_1m~-m} \Big)^2 }
{\Big( g^{l_1+2k+3~-m_1+m}_{l_1m_1m~-m} \Big)^2  \Big( g^{l_1-2k-1~-m_1+m}_{l_1m_1m~-m} \Big)^2  }
\frac{    (l_1-k-1)(l_1+k+1)   }{  (l_1+k+2)(l_1-k)  }\\
&=&  \frac{    (2l_1-4k-5)(2l_1+4k+3)   }{  (2l_1+4k+7)(2l_1-4k-1)  }\\
&&  \times  \frac{  \left(  \begin{array}{ccc}  l_1          &   m  &  l_1-2k-3\\  m_1          &   -m    &    -m_1+m  \end{array}\right)^2
\left(\begin{array}{ccc}  l_1          &   m  &  l_1+2k+1\\  m_1          &   -m    &    -m_1+m \end{array}\right)^2   }
{
\left(\begin{array}{ccc}  l_1          &   m  &  l_1-2k-1\\  m_1          &   -m    &    -m_1+m \end{array}\right)^2
\left(  \begin{array}{ccc}  l_1          &   m  &  l_1+2k+3\\  m_1          &   -m    &    -m_1+m  \end{array}\right)^2}\\
%
%
%&&====================\\
%
&&  \times   \frac{
\left(  \begin{array}{ccc}  l_1          &   m  &  l_1-2k-3\\  1          &   -1    &    0  \end{array}\right)^2
\left(\begin{array}{ccc}  l_1          &   m  &  l_1+2k+1\\  1          &   -1    &    0 \end{array}\right)^2   }
{
\left(  \begin{array}{ccc}  l_1          &   m  &  l_1-2k-1\\  1          &   -1    &    0  \end{array}\right)^2
\left(\begin{array}{ccc}  l_1          &   m  &  l_1+2k+3\\  1          &   -1    &    0 \end{array}\right)^2}\\
&&  \times\frac{    (l_1-k-1)(l_1+k+1)   }{  (l_1+k+2)(l_1-k)  }\\
\end{eqnarray*}
Now, using \eqref{3j Edmond j1 m1} and \eqref{3j Edmond j1 1 j2 -1} and the fact that $2k+4\leq m$ we obtain
\begin{eqnarray*}
&&  \frac{h(l_1,m_1,m,k+1)}{h(l_1,m_1,m,k)}\\
%
%%%%%%%%%%%%%%%%%%%%%%%%%%%%%%%%%%%%%%%%%%%%%%%%%%%%%%%%%%%%%%%%%%%%%%%%%%%
%
&=&    \frac{ ( 2 l_1 +m -2 k -1    )^{2}   ( l_1 -m +m_1 - 2 k - 2  )   ( l_1 - m + m_1 -2 k -1   )       }{    ( 2 l_1 - m - 2 k  -1 )^{2}   ( l_1 + m - m_1  -2 k - 2  )  ( l_1 + m   - m_1 - 2 k - 1   )    }\\
&&\times  \frac{  (2 l_1 +m +2 k +3  )^{2} (l_1 -m  +m_1 +2 k  +2 )  ( l_1 - m  + m_1 + 2 k +3   )  }{   (  2 l_1 -m +2 k +3 )^{2} ( l_1 + m  - m_1   +2 k  +2 )
(l_1 + m  -m_1 +2 k +3  ) }   \\
&& \frac{    (2l_1-4k-5)(2l_1+4k+3)   }{  (2l_1+4k+7)(2l_1-4k-1)  }
\times\frac{    (l_1-k-1)(l_1+k+1)   }{  (l_1+k+2)(l_1-k)  }\\
%
%%%%%%%%%%%%%%%%%%%%%%%%%%%%%%%%%%%%%%%%%%%%%%%%%%%%%%%%%%%%%%%%%%%%%%%%%%%%%
%
%
&  >  &    \frac{ ( 2 l_1 +m -2 k -1    )        }{    ( 2 l_1 - m - 2 k  -1 )     }      \frac{  (2 l_1 +m +2 k +3  )   }{   (  2 l_1 -m +2 k +3 ) }   \frac{    (2l_1-4k-5)   }{  (2l_1-4k-1)  }\\
&&
\times\frac{    (l_1-k-1)(l_1+k+1)   }{  (l_1+k+2)(l_1-k)  }\\
%
%
%%%%%%%%%%%%%%%%%%%%%%%%%%%%%%%%%%%%%%%%%%%%%%%%%%%%%%%%%%%%%%%%%%%%%%%%%%%%%
%
%
&  \geq  &    \frac{  (2l_1-4k-5)        }{    ( 2 l_1 - m - 2 k  -1 )     }    \times  \frac{  (2 l_1 +m +2 k +3  )   }{   (  2 l_1 -m +2 k +3 ) }
\frac{    (l_1+k+1)   }{  (l_1+k+2)  }   > 1
\end{eqnarray*}

%
%\frac{\left( \begin{array}{ccc} l_1          &   m  &  l_1+2k+1\\  1          &   -1    &    0\end{array}\right)^2       }
%{  \frac{\left( \begin{array}{ccc} l_1          &   m  &  l_1+2k+1\\  1          &   -1    &    0\end{array}\right)^2       }
% }
%\frac{    (l_1-k-1)(l_1+k+1)   }{  (l_1+k+2)(l_1-k)  }
%
%
%
\textbf{Case 2.} If $m$ is odd, the same method is modified in the following way.
Then for $l_1-m+1\leq l_3 \leq l_3+m-1$  in $g^{l_3~-m_1+m}_{l_1m_1 m~-m}$  the summation $L:=l_1+m+l_3$ is $L=l_1+m+l_1\pm j=2l_1+m\pm j$ for some $j\in\mathbb{N}\cup\{0\}$. since $m$ is odd then, the structure constant
$g^{l_3~-m_1+m}_{l_1m_1 m~-m}$ is nonzero if $j=\pm 2k$. Moreover
\begin{eqnarray*}
l_1-m+1 \leq l_1-2k
\end{eqnarray*}
or equivalently $0\leq k\leq\frac{m-1}{2}=[\frac{m-1}{2}]$. Note that for $k=0$ we have
\begin{eqnarray*}
\Big( g^{l_1~-m_1+m}_{l_1m_1m~-m} \Big)^2 \Big( l_1(l_1+1)  -   (l_1)(l_1+1)  \Big)=0
\end{eqnarray*}
which implies that $0< k\leq\frac{m-1}{2}$.
The summation appeared in \eqref{MC with str cts} is symmetric with respect to the new variable $k$. In fact we can write
\begin{eqnarray*}
MC(e_{l_1m_1},e_{m~-m}) &=&   \sum_{k=1}^{\frac{m-1}{2}}  \Big( g^{l_1-2k~-m_1+m}_{l_1m_1m~-m} \Big)^2 \Big( l_1(l_1+1)  -   (l_1-2k)(l_1-2k+1)  \Big) \\
&& + \Big( g^{l_1+2k~-m_1+m}_{l_1m_1m~-m} \Big)^2 \Big( l_1(l_1+1)  -   (l_1+2k)(l_1+2k+1)  \Big).
\end{eqnarray*}
We will prove that each summand is absolutely positive or equivalently for any $1\leq k\leq \frac{m-1}{2}$
\begin{equation*}
f(l_1,m_1,m,k):=\frac{  \Big( g^{l_1-2k~-m_1+m}_{l_1m_1m~-m} \Big)^2 \Big( l_1(l_1+1)  -   (l_1-2k)(l_1-2k+1)  \Big)  }
{\Big( g^{l_1+2k~-m_1+m}_{l_1m_1m~-m} \Big)^2 \Big( -l_1(l_1+1)  +   (l_1+2k)(l_1+2k+1)  \Big)}>1
\end{equation*}
Following the previous part, we prove even more i.e.
\begin{eqnarray*}
1< f(l_1,m_1,m,1) &<&  \dots< f(l_1,m_1,m,k) <  f(l_1,m_1,m,k+1) \\
&<&\dots <  f(l_1,m_1,m,\frac{m-1}{2}).
\end{eqnarray*}
This is equivalent with showing that  $f(l_1,m_1,m,1)>1$ and
\begin{equation*}
1< \frac{  f(l_1,m_1,m,k+1)}{  f(l_1,m_1,m,k)}.
\end{equation*}
for any $1\leq k< k+1\leq \frac{m-1}{2}$.\\
Using  equations \eqref{3j Edmond j1 m1}, \eqref{3j Edmond j1 1 j2 -1}, \eqref{Dowker's formula for str cts} and the fact that $3\leq m\leq m_1$ we observe that 
\begin{eqnarray*}
&& f(l_1,m_1,m,1)\\
&=& \frac{  \Big( g^{l_1-2~-m_1+m}_{l_1m_1m~-m} \Big)^2 \Big( l_1(l_1+1)  -   (l_1-2)(l_1-1)  \Big)  }
{\Big( g^{l_1+2~-m_1+m}_{l_1m_1m~-m} \Big)^2 \Big(  - l_1(l_1+1)  +   (l_1+2)(l_1+3)  \Big)}\\
&=&\frac{  2l_1-3}{2l_1+5} \frac{
\left( \begin{array}{ccc} l_1          &   m  &  l_1-2\\ m_1          &   -m    &    -m_1+m \end{array}   \right)^2
\left(  \begin{array}{ccc}l_1          &   m  &  l_1-2\\1          &   -1    &    0\end{array}\right)^2
}{
\left( \begin{array}{ccc} l_1          &   m  &  l_1+2\\ m_1          &   -m    &    -m_1+m \end{array}\right)^2
\left(\begin{array}{ccc}  l_1          &   m  &  l_1+2\\1             &   -1    &    0      \end{array}\right)^2}
\frac{  2l_1-1}{2l_1+3}   \\
&=&   \frac{ (l_1 + m_1 - m  -1 )  ( l_1 + m_1 -m  )  (l_1 + m_1 - m  +1  )  ( l_1 + m_1 -m +2  )
 }{               (l_1  - m_1 + m -1  )  ( l_1 -m_1 + m    )  ( l_1 -m_1 +  m + 1  )( l_1  -m_1 + m  +2  )   }\\
&&\times \frac{ ( 2 l_1 + m  +2 )^{2}  ( 2 l_1 + m )^{2}  (2 l_1 -3 )  (2 l_1 -1 )   }{   (2 l_1 -m + 2 )^{2}  (2 l_1 +5) (2 l_1 +3) (2 l_1 -m  )^{2}  }\\
&>&  \frac{ ( 2 l_1 + m  +2 )^{2}  ( 2 l_1 + m )   }{   (2 l_1 -m + 2 )^{2}  (2 l_1 +5)   }
\geq  \frac{ ( 2 l_1 + m  +2 )  ( 2 l_1 + m )   }{   (2 l_1 -m + 2 )^{2}     }\\
&>&   \frac{ ( 2 l_1 + m  +2 )     }{   (2 l_1 -m + 2 )     }  >1
\end{eqnarray*}
Again using \eqref{3j Edmond j1 m1} and \eqref{3j Edmond j1 1 j2 -1}  and the fact that $2\leq m\leq m_1$ we get
\begin{eqnarray*}
&&  \frac{  f(l_1,m_1,m,k+1)}{  f(l_1,m_1,m,k)}\\
&=&    \frac{  (l_1 + m_1 -m - 2k - 1    )   ( l_1 + m_1 -m   -2 k   ) (  2 l_1 + m  -2 k )^{2}   ( 2l_1 +m +2 k +2 )^{2}    }{    ( l_1  - m_1  +m - 2 k - 1 )  (  l_1 - m_1 + m - 2 k  ) ( 2 l_1 -m  - 2 k )^{2}( 2 l_1  -m + 2k + 2 )^{2} }\\
&&\times \frac{   ( l_1 + m_1 - m + 2k + 1 )  ( l_1  + m_1   - m + 2k + 2 )      }{        (  l_1 - m_1 + m + 2 k +1  )  ( l_1 - m_1 + m   +2 k + 2 )       } \times \frac{  (2l_1+4k+1)(2l_1-4k-3)  }{  (2l_1 -4k+1)( 2l_1+4k+5) }     \\
&&\times  \frac{  (2l_1-2k-1)(2l_1+2k+1)  }{  (2l_1+2k+3)(2l_1-2k+1)}  \\
%
%======================================================
%
&>&    \frac{  1    }{     ( 2 l_1 -m  - 2 k ) ( 2 l_1  -m + 2k + 2 ) }
\frac{  (2l_1+4k+1)(2l_1-4k-3)  }{  (2l_1 -4k+1) }     \\
&&\times  \frac{  (2l_1-2k-1)(2l_1+2k+1)  }{  (2l_1+2k+3)}  >1
\end{eqnarray*}
which completes the proof or part \textbf{i}.

%
%
%%%%%%%%%%%%%%%%%%%%%%%%%%%%%%%%%          Proof of part 1         %%%%%%%%%%%%%%%%%%%%%%%%%%%%%%%%%%%%%%%%%%%
%
%
Proof of part \textbf{ii.} The proof follows the same method as in part one, utilizing equation \eqref{3j Pres j1 1} instead of \eqref{3j Edmond j1 m1}. However, a quick proof for the case $l_2=1$ is as follows.

For $l_2=2$ we note that  the criterion \eqref{MC with str cts} reduces to 
\begin{eqnarray*}
&&  {MC}(  e_{l_11} , e_{2~1} )  = \sum_{l_3=  l_1- 1  }^{l_1+ 1}       \Big( g^{l_3~-2}_{l_1 1~ 2~1} \Big)^2 \Big( l_1(l_1+1)  -   l_3(l_3+1)  \Big)\\
&=&        \Big( g^{l_1- 1 ~ -2}_{l_1 1~ 2~1} \Big)^2 \Big( l_1(l_1+1)  -   l_1(l_1-1)  \Big)\\
&& +         \Big( g^{l_1+ 1 ~ -2}_{l_1 1~ 2~1} \Big)^2 \Big( l_1(l_1+1)  -   (l_1+ 1)(l_1+ 2)  \Big).
\end{eqnarray*}
We will show that
\begin{eqnarray*}
&&  \Big( g^{l_1- 1 ~ -2}_{l_1 1~ 2~1}  \Big)^2 \Big( l_1(l_1+1)  -   l_1(l_1-1)  \Big)    >\\
&& |       \Big( g^{l_1+ 1 ~ -2}_{l_1 1~ 2~1} \Big)^2 \Big( l_1(l_1+1)  -   (l_1+ 1)(l_1+ 2)  \Big)   |
\end{eqnarray*}
or equivalently
\begin{eqnarray*}
I&:=&   \frac{   \Big( g^{l_1- 1 ~ -2}_{l_1 1~ 2~1}  \Big)^2    }{
\Big( g^{l_1+ 1 ~ -2}_{l_1 1~ 2~1}  \Big)^2 }  \times  \frac{\Big( l_1(l_1+1)  -   l_1(l_1-1)  \Big)}{\Big(- l_1(l_1+1)  +   (l_1+ 1)(l_1+ 2)  \Big)}\\
&=&  \frac{   \Big( g^{l_1- 1 ~ -m_1+2}_{l_1m_1 2~-2} \Big)^2    }{
\Big( g^{l_1+ 1 ~ -m_1+2}_{l_1m_1 2~-2} \Big)^2 } \times  \frac{l_1}{l_1+1}>1
\end{eqnarray*}
Using equation \eqref{Dowker's formula for str cts}, \eqref{3j Pres j1 1} and \eqref{3j Edmond j1 1 j2 -1}   after some simplifications  we observe that
\begin{eqnarray*}
&&    \frac{   \Big( g^{l_1- 1 ~ -2}_{l_1 1~ 2~1}  \Big)^2    }{
\Big( g^{l_1+ 1 ~ -2}_{l_1 1~ 2~1}  \Big)^2 }
=\\
&&  =  \frac{             (  2l_1-1  ) 
\left(
\begin{array}{ccc}
l_1    &   2  &  l_1-1\\
1          &   1  &  -2
\end{array}\right)
\left(
\begin{array}{ccc}
l_1    &   2  &  l_1-1\\
1      &   -1   &   0
\end{array}\right)                        }{ 
(  2l_1+3  )
\left(
\begin{array}{ccc}
l_1    &   2  &  l_1+1\\
 1          &  1  &   -2
\end{array}\right)
\left(
\begin{array}{ccc}
l_1    &   2  &  l_1+1\\
1      &   -1   &   0
\end{array}\right)                      } \\
&&=  \frac{(  2l_1-1  )}{(  2l_1+3  )}    \times  \frac{  (l_1+3)(2 l_1 +3 )^2  (l_1 -1  )   }{  (l_1-2 )  (l_1 +2 )  (2  l_1 -1 )^2}\\
&&=   \frac{  (l_1+3)(2 l_1 +3 )  (l_1 -1  )   }{  (l_1-2 )  (l_1 +2 )  (2  l_1 -1 )}
\end{eqnarray*}
As a result we have

\begin{eqnarray*}
I=   \frac{  (l_1+3)(2 l_1 +3 )  (l_1 -1  )   }{  (l_1-2 )  (l_1 +2 )  (2  l_1 -1 )}
 \times   \frac{l_1}{l_1+1}
>  \frac{  (2 l_1 +3 )     }{    (2  l_1 -1 )}
 \times   \frac{l_1}{l_1+1}  >1.
\end{eqnarray*}
which completes the proof.
\end{proof}
%
%
%%%%%%%%%%%%%%%%%%%%%%%%%%%%%%%%%%%%%%        end proof of the theorem       %%%%%%%%%%%%%%%%%%%%%%%%%%%%%%%%
%
\begin{cor}
Let $x=(x_1,\dots,x_n)\in\mathbb{R}^n$ be an arbitrary vector and $m_1^\prime<m_2^\prime\dots<m_n^\prime$ be integers. For $2\leq m_1\leq l_1$ and $2\leq m\leq m_1$ using proposition \ref{Prop Mis Curv linearity} for  the Misiolek criterion we get
\begin{eqnarray*}
&&  MC(e_{l_1 m_1} ,e_{m-m} +\sum_{j=1}^n x_je_{l_j^\prime m_j^\prime})  \\
&=&  MC(e_{l_1 m_1} ,e_{m-m} ) +\sum_{j=1}^n |x_j|^2MC(e_{l_1 m_1} , e_{l_j^\prime m_j^\prime})\\
&\geq&  MC(e_{l_1 m_1} ,e_{m-m} ) + ||x||^2\Lambda
\end{eqnarray*}
where
\[
\Lambda=\min \{MC(e_{l_1 m_1} , e_{l_j^\prime m_j^\prime})\}_{j=1,\dots n}.  
\]
As a result, we observe  that by replacing $e_{m-m}$ with
$e_{m-m}+\sum_{j=1}^n x_j e_{l_j^\prime m_j^\prime}$, where $||(x_1,\dots,x_n)||$ is sufficiently small, conjugate points still exist.
\end{cor}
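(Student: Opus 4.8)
The plan is to combine one short algebraic computation with the Misiolek--criterion machinery of Section~2. Throughout put $u:=e_{l_1m_1}$ and $v:=e_{m-m}+\sum_{j=1}^n x_j e_{l_j^\prime m_j^\prime}$, and note that, for the displayed decomposition to hold, the azimuthal labels $-m,m_1^\prime,\dots,m_n^\prime$ must be pairwise distinct; the ordering $m_1^\prime<\dots<m_n^\prime$ already gives mutual distinctness, so the only additional requirement is $m_j^\prime\neq -m$ for every $j$, i.e.\ that the perturbing fields point in directions genuinely different from $e_{m-m}$ (this should be stated explicitly). First I would prove the additive identity
\[
MC(u,v)=MC(e_{l_1m_1},e_{m-m})+\sum_{j=1}^n |x_j|^2\, MC(e_{l_1m_1},e_{l_j^\prime m_j^\prime}),
\]
by repeating the computation in the proofs of Propositions~\ref{Prop Mis Curv e} and~\ref{Prop Mis Curv linearity}: using Lemma~\ref{lemma MC on Dq}(b) with $f=Y_{l_1m_1}$ and $g=Y_{m-m}+\sum_j x_j Y_{l_j^\prime m_j^\prime}$, one is led to expand $\bigl| g^{l_3\,-m_3}_{l_1m_1\,m\,-m}+\sum_j x_j\, g^{l_3\,-m_3}_{l_1m_1\,l_j^\prime m_j^\prime}\bigr|^2$ against the factor $l_1(l_1+1)-l_3(l_3+1)$, exactly as in \eqref{MC with str cts}. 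Every mixed product vanishes because, as recorded in \eqref{formula str cts reduced sum}, $g^{l_3\,-m_3}_{l_1m_1\,l\,m^\prime}$ is nonzero only when $m_3=m_1+m^\prime$, so two structure constants with distinct ``second'' azimuthal indices have disjoint support in the summation variable $m_3$; this is the $n$-term version of Proposition~\ref{Prop Mis Curv linearity}(i), proved in the same way.

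Second I would turn this identity into a lower bound. Since $\Lambda=\min_{1\le j\le n}MC(e_{l_1m_1},e_{l_j^\prime m_j^\prime})$ is a minimum of finitely many real numbers it is a finite real number (possibly negative or zero, e.g.\ when some $l_j^\prime=1$, by Corollary~\ref{Cor-MC-l1=1}), and from $MC(e_{l_1m_1},e_{l_j^\prime m_j^\prime})\ge\Lambda$ together with $|x_j|^2\ge 0$ one gets $\sum_j|x_j|^2 MC(e_{l_1m_1},e_{l_j^\prime m_j^\prime})\ge\Lambda\|x\|^2$, hence $MC(u,v)\ge MC(e_{l_1m_1},e_{m-m})+\Lambda\|x\|^2$. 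By Theorem~\ref{Theorem MC S.H.}(i), which applies since $2\le m\le m_1\le l_1$, the number $\kappa_0:=MC(e_{l_1m_1},e_{m-m})$ is strictly positive. Therefore $MC(u,v)>0$ for all $x$ when $\Lambda\ge 0$, and for all $x$ with $\|x\|^2<\kappa_0/(-\Lambda)$ when $\Lambda<0$; in either case there is an explicit radius $r>0$ with $MC(u,v)>0$ whenever $\|x\|<r$.

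Third I would invoke the construction following \eqref{MC original}. The field $u=\nabla^\perp Y_{l_1m_1}$ is a stationary solution of the Euler--Arnold equation on $\D^s_{vol}(\Sph^2)$, since $Y_{l_1m_1}$ is a Laplacian eigenfunction and hence $\{Y_{l_1m_1},\Delta Y_{l_1m_1}\}=0$; let $\eta$ be the geodesic it generates. Choosing $\kappa>0$ with $MC(u,v)>\kappa\|v\|_\cg^3>0$ and setting $\psi(t)=\sin\!\bigl(t\sqrt{\kappa\|v\|_\cg}\bigr)$, $t_0=\pi/\sqrt{\kappa\|v\|_\cg}$, the test field $Y(t)=\psi(t)\,v(\eta(t))$ satisfies $Y(0)=Y(t_0)=0$ and, by exactly the computation displayed after \eqref{MC original}, $I_0^{t_0}(Y,Y)<0$; Misiolek's lemma then yields $0<t_c\le t_0$ with $\eta(0)$ and $\eta(t_c)$ conjugate. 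Since $v$ is here the perturbed direction $e_{m-m}+\sum_j x_j e_{l_j^\prime m_j^\prime}$, this is precisely the statement that conjugate points along the geodesic generated by $e_{l_1m_1}$ persist under the perturbation, so that this geodesic is still not a global length minimizer.

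No analytic difficulty arises: given Theorem~\ref{Theorem MC S.H.}(i), the elementary bound of the second paragraph, and the standard Misiolek index argument, the only step requiring genuine care is the cancellation of the cross terms in the first paragraph --- that is, verifying that all the azimuthal labels $-m,m_1^\prime,\dots,m_n^\prime$ are mutually distinct so that the mixed products of structure constants have disjoint $m_3$-support (and, correspondingly, making the hypothesis $m_j^\prime\neq -m$ explicit). Everything else is bookkeeping.
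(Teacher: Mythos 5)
Your proposal is correct and takes essentially the same route as the paper: the $n$-term extension of Proposition \ref{Prop Mis Curv linearity} via cancellation of the mixed structure-constant terms, the elementary bound by $\Lambda\|x\|^2$, positivity of $MC(e_{l_1m_1},e_{m-m})$ from Theorem \ref{Theorem MC S.H.}, and the standard Misiolek index construction yielding conjugate points for $\|x\|$ small. Your observation that the statement tacitly requires $m_j^\prime\neq -m$ for every $j$ (so that all second azimuthal indices are pairwise distinct and the cross terms genuinely vanish) is a fair point about a hypothesis the paper leaves implicit, not a defect of your argument.
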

\begin{Rem}
It is expected that the previous theorem could be generalized for the case where $2 \leq m \leq 2m_1 - 2$. However, it seems that one needs to consider even more cases, such as $m = m_1 + 1$, $m_1 + 2 \leq m \leq 2m_1 - 3$, and $m = 2m_1 - 2$, and write the computations for both odd and even cases. Moreover, in the context of the previous theorem, the summation that appears in the Misiolek criterion is not necessarily  symmetric. 
A similar discussion might be stated for $MC(e_{l_1~ 1} ,e_{l_2~1}).$

Keeping the statement of the second part of the corollary \ref{Cor-MC-l1=1} about $e_{1~1}$ in mind, the sole exception that we can not guarantee the existence  of conjugate points is $e_{2~\pm1}.$

Of course, we know that the Misiolek criterion for zonal spherical harmonics on $\cD^s_{vol}(\Sph^2)$ is always non-positive \cite{Tauchi-Yoneda}.
\end{Rem}
%
%
%
%
%
%%%%%%%%%%%%%%%%%%%%%%%%%%%%%%%%%%%%      Section  Conjugate points at the presence of the Coriolis force      %%%%%%%%%%%%%%%%%%%%%%%%%%%%%%%%%%%%%%%
%%%%%%%%%%%%%%%%%%%%%%%%%%%%%%%%%%%%      Section  Conjugate points at the presence of the Coriolis force      %%%%%%%%%%%%%%%%%%%%%%%%%%%%%%%%%%%%%%%
%%%%%%%%%%%%%%%%%%%%%%%%%%%%%%%%%%%%      Section  Conjugate points at the presence of the Coriolis force      %%%%%%%%%%%%%%%%%%%%%%%%%%%%%%%%%%%%%%%
%
%
%

\section{ Conjugate points and  the impact of the  Coriolis force}

In this section, we investigate the impact of the Coriolis force on conjugate points along two key solutions of the QGS \eqref{eq geodesic-equation of central extension reduced} which are generated by spherical harmonics. Initially, we present the Misiolek criterion, defined in (\ref{eqn Misiolek Curvature on S2}), utilizing real structure constants.

In the absence of the Coriolis effect, the Misiolek Criterion cannot ensure the existence of conjugate points along vector fields generated by zonal spherical harmonics \cite{Tauchi-Yoneda}. In this section, we will first try to prove the existence of conjugate points  along zonal spherical harmonics in the presence of the Coriolis effect.

% When dealing with zonal spherical harmonics, we begin by calculating Jacobi fields in the direction of individual (zonal) spherical harmonics. %This analysis ensures the existence of conjugate points only for the wave number $l_1=1$.

 We will prove that by varying rotation speeds and direction of rotation (i.e. suitable values for $a\in\mathbb{R}$), conjugate points can occur along any $e_{l_1~0}$ with $l_1=2k+1\in\N$.

On the other hand, Rossby Haurwitz waves (RHW for short), widely used in meteorology, offer a time-dependent class of solutions for the QGS \eqref{eq geodesic-equation of central extension reduced}. Benn \cite{Benn2021} adopted  the Misiolek criterion  for these time-dependent solutions. We observe that the Coriolis effect stabilizes the system, generating conjugate points that wouldn't appear without it.
\subsection{Misiolek criterion at the presence of the Coriolis force}\label{sub sect. CP non-zonal}
Following section \ref{Sect. CP S.H}, first we compute the Misiolek criterion (\ref{eqn Misiolek Curvature on S2})  according to the structure constants.
%
%
%%%%%%%%%%%%%%%%%%%%%%%%%%%%%%%%%%%%%%%%%%%%%%%%%%%%          Prop Mis Curv e rot         %%%%%%%%%%%%%%%%%%%%%%%%%%%%%%%%%%%%%%%%%%%%%%%%%%%%%%%%%%555
%
%
\begin{prop}\label{Prop Mis Curv e rot}
For  $e_{l_1m_1}$ and $e_{l_2m_2}$  %and $\phi=\mu=\sqrt{\frac{4\pi}{3}}Y_{10}$
we have
\begin{eqnarray}\label{MC for rot s.h.}
\widehat{MC}\Big( {(e_{l_1m_1},a),(e_{l_2m_2},b)}  \Big)  &=& \sum_{l_3,m_3}( g^{l_3-m_3}_{l_1m_1l_2m_2})^2 \Big(   l_1(l_1+1)  -l_3(l_3+1)   \Big) \nonumber\\
&&  -  m_1^2\delta^{m_1}_{m_2}  \delta^{l_1}_{l_2}   + (-1)^{m_2}a m_2 g^{l_2-m_2}_{l_1m_1l_2m_2}
\end{eqnarray}
\end{prop}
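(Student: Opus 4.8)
The plan is to start from the stream-function formula for the Misiolek criterion in Lemma \ref{lemma MC on Dq}(b), namely
\begin{equation*}
\widehat{MC}\big((u,a),(v,b)\big)=\langle\Delta\{f,g\},\{f,g\}\rangle-\langle\{\Delta f,g\},\{f,g\}\rangle-\langle\{\mu,f\},g\rangle^2-a\langle\{\mu,\{f,g\}\},g\rangle,
\end{equation*}
and substitute $f=Y_{l_1m_1}$, $g=Y_{l_2m_2}$, then expand every bracket using the structure-constant expansion \eqref{structure constants} together with the reduced sum form \eqref{formula str cts reduced sum}. The first two terms are exactly what was handled in Proposition \ref{Prop Mis Curv e}: using $\{f,g\}=G^{l_3m_3}_{l_1m_1l_2m_2}Y_{l_3m_3}$, the eigenvalue property $\Delta Y_{l_3m_3}=-l_3(l_3+1)Y_{l_3m_3}$, the orthogonality relation \eqref{inner pproduct}, and the conversion \eqref{realstr constants} from complex to real structure constants, they collapse to $\sum_{l_3,m_3}(g^{l_3-m_3}_{l_1m_1l_2m_2})^2\big(l_1(l_1+1)-l_3(l_3+1)\big)$. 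So the only genuinely new work is evaluating the two Coriolis-related terms.

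For the third term, $-\langle\{\mu,f\},g\rangle^2$, I would note that $\mu$ is (up to a constant) the zonal harmonic $Y_{1~0}$, so $\{\mu,Y_{l_1m_1}\}=\{Y_{1~0},Y_{l_1m_1}\}$; when $l_2=1$ the bracket range in \eqref{formula str cts reduced sum} degenerates, and more cleanly one can use the identity $\{\mu,Y_{lm}\}=\partial_\lambda Y_{lm}/$ (sign) $=im\,C^m_lP^{|m|}_l e^{im\lambda}$ directly from \eqref{poisson bracket preston}, i.e. $\{\mu,Y_{l_1m_1}\}=\partial_\lambda Y_{l_1m_1}=im_1Y_{l_1m_1}$. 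Pairing with $g=Y_{l_2m_2}$ via \eqref{inner pproduct} gives $\langle\{\mu,f\},g\rangle=im_1(-1)^{m_1}\delta^{l_1}_{l_2}\delta^{m_1}_{-m_2}$; squaring this produces $-m_1^2\delta^{l_1}_{l_2}\delta^{m_1}_{-m_2}$, and since under the implicit convention $\langle e_{l_1m_1},e_{l_2m_2}\rangle=\langle e_{l_1m_1},e_{l_2m_2}^*\rangle$ one effectively replaces $m_2$ by $-m_2$, this is written as $-m_1^2\delta^{m_1}_{m_2}\delta^{l_1}_{l_2}$, matching the stated formula. For the fourth term, $-a\langle\{\mu,\{f,g\}\},g\rangle$, I would first use $\{f,g\}=G^{l_3m_3}_{l_1m_1l_2m_2}Y_{l_3m_3}$ and then $\{\mu,Y_{l_3m_3}\}=\partial_\lambda Y_{l_3m_3}=im_3Y_{l_3m_3}$, so that $\{\mu,\{f,g\}\}=im_3G^{l_3m_3}_{l_1m_1l_2m_2}Y_{l_3m_3}$. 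Pairing against $g=Y_{l_2m_2}$ forces $l_3=l_2$ and $m_3=-m_2$ (or $m_3=m_2$ under the conjugation convention), yielding $-a\cdot i m_2 \cdot G^{l_2m_2}_{l_1m_1l_2m_2}\cdot(-1)^{m_2}$, and converting $G$ to $g$ via \eqref{realstr constants} turns the factor $iG$ into a real multiple of $g^{l_2-m_2}_{l_1m_1l_2m_2}$, producing the term $(-1)^{m_2}am_2g^{l_2-m_2}_{l_1m_1l_2m_2}$ after tracking the signs carefully.

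Assembling the three pieces gives \eqref{MC for rot s.h.}. The main obstacle is purely bookkeeping: keeping the sign conventions straight through the chain of substitutions, in particular the factor $(-1)^{m}$ in \eqref{realstr constants}, the $(-1)^{m_1}$ in the orthogonality relation \eqref{inner pproduct}, the sign of the Poisson bracket \eqref{poisson bracket preston} (which the Remark notes does not affect the final result), and the implicit replacement $g\mapsto g^*$ in the inner product when $m_2\neq0$. One must also be careful that the Kronecker-delta term and the $a$-term are consistent about whether the surviving index is $m_2$ or $-m_2$; resolving this consistently is what makes the stated formula come out with exactly those signs. No analytic difficulty is expected beyond this careful tracking of constants.
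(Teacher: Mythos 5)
Your proposal is correct in substance and follows essentially the same route as the paper: you start from Lemma \ref{lemma MC on Dq}(b) with $f=Y_{l_1m_1}$, $g=Y_{l_2m_2}$, reuse Proposition \ref{Prop Mis Curv e} for the first two terms, and evaluate $\{\mu,f\}=\mp i m_1 Y_{l_1m_1}$ and $\{\mu,\{f,g\}\}$ through orthogonality and the conversion \eqref{realstr constants}, exactly as the paper does. The one bookkeeping point to repair is the third term: $\langle\{\mu,f\},g\rangle^2$ must be read as the modulus square $\left| -im_1\langle Y_{l_1m_1},Y_{l_2m_2}\rangle\right|^2=m_1^2\delta^{l_1}_{l_2}\delta^{m_1}_{m_2}$ (this is how the paper computes it, consistently with the conjugation convention on the pairing), because literally squaring the purely imaginary quantity $im_1(-1)^{m_1}\delta^{l_1}_{l_2}\delta^{m_1}_{-m_2}$, as written in your sketch, yields $-m_1^2\,\delta^{l_1}_{l_2}\delta^{m_1}_{-m_2}$ and then the overall minus sign in \eqref{eqn Misiolek Curvature on S2} would flip the contribution to $+m_1^2$, contradicting the stated $-m_1^2\delta^{m_1}_{m_2}\delta^{l_1}_{l_2}$.
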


\begin{proof}
According to proposition \ref{Prop Mis Curv e}, for  $f=Y_{l_1m_1}$, $g=Y_{l_2m_2}$  we have
\begin{eqnarray*}
&& \langle \Delta \{f,g\}  ,  \{f,g\}  \rangle  - \langle  \{\Delta f,g\}  ,  \{f,g\}  \rangle \\
&=& \sum_{l_3,m_3}( g^{l_3-m_3}_{l_1m_1l_2m_2})^2 \Big(   l_1(l_1+1)  -l_3(l_3+1) \Big)
\end{eqnarray*}
Moreover
\begin{eqnarray*}
\langle  \{\mu,f\}  ,  g  \rangle^2  &=&   \Big|  -im_1 \langle Y_{l_1m_1}  ,  Y_{l_2m_2} \rangle  \Big|^2  =   m_1^2 \delta^{m_1}_{m_2}  \delta^{l_1}_{l_2}
\end{eqnarray*}
and
\begin{eqnarray*}
\langle \{\mu, \{f,g\}\}  ,  g  \rangle  &=& - im_3 G^{l_3m_3}_{l_1m_1l_2m_2}   \langle Y_{l_3m_3}  ,  Y_{l_2m_2}  \rangle  \\
&=&
- i m_3 G^{l_3m_3}_{l_1m_1l_2m_2}  \delta^{m_3}_{m_2}  \delta^{l_3}_{l_2}\\
&=&     -i m_2 G^{l_2m_2}_{l_1m_1l_2m_2}\\
&=&     i^2 (-1)^{m_2}m_2 g^{l_2  -m_2}_{l_1m_1l_2m_2}\\
&=&     -(-1)^{m_2} m_2 g^{l_2-m_2}_{l_1 m_l l_2m_2}
\end{eqnarray*}
The result is followed by considering equation (\ref{eqn Misiolek Curvature on S2}) and substituting the above results.
\end{proof}
%
%
%%%%%%%%%%%%%%%%%%%%%%%%%%%%%%%%%%%%%%%%%%%%    end proof of prop          %%%%%%%%%%%%%%%%%%%%%%%%%%%%%%%%%%%%%%%%%%%%%%
%
%
As a result of the theorem \ref{Theorem MC S.H.} and \eqref{MC for rot s.h.} we see that for $2\leq m\leq m_1$ 
\begin{eqnarray*}\label{MC for rot s.h.}
{MC}\Big( {(e_{l_1m_1},a),(e_{m~-m},b)}  \Big)  ={MC}\Big( e_{l_1m_1},e_{m~-m}  \Big)>0   
\end{eqnarray*}
and for any $2\leq l_2<l_1$
\begin{eqnarray*}\label{MC for rot s.h.}
{MC}\Big( {(e_{l_1~1},a),(e_{l_2~1},b)}  \Big)  ={MC}\Big( e_{l_1~1},e_{l_2~1}  \Big)>0   
\end{eqnarray*}
which means that after considering the Coriolis force, the conjugate points suggested by theorem \ref{Theorem MC S.H.} still appear. However, the term 
\[
(-1)^{m_2}a m_2 g^{l_2-m_2}_{l_1m_1l_2m_2}
\]
is nonzero when $m_1 + m_2 - m_2 = m_1 = 0.$ Consequently, this term could assist us only in identifying conjugate points along zonal spherical harmonics.

%
%

%
%
%
%%%%%%%%%%%%%%%          Conjugate points along zonal spherical harmonics at the presence of the Coriolis force            %%%%%%%%%%%%%%%%%%
%%%%%%%%%%%%%%%          Conjugate points along zonal spherical harmonics at the presence of the Coriolis force            %%%%%%%%%%%%%%%%%%
%%%%%%%%%%%%%%%          Conjugate points along zonal spherical harmonics at the presence of the Coriolis force            %%%%%%%%%%%%%%%%%%
%
%
%
\subsection{Conjugate points along zonal spherical harmonics}
In this section will prove that by varying rotation speeds  and direction of rotation (rotation rate)  which is governed by the parameter $a$, conjugate points can occur along any $e_{l_1~0}$ with $l_1=2k+1\in\N$.
%
%
%
%\begin{Rem}
The last term appeared in \eqref{MC for rot s.h.} is nonzero if the first spherical harmonics $e_{l_1m_1}$ is zonal and it is perturbed by a non-zonal wave. More precisely $g^{l_2-m_2}_{l_1 m_l l_2m_2} \neq 0$ if  $m_1+m_2-m_2=m_1=0$. Moreover $m_2g^{l_2-m_2}_{l_1 m_l l_2m_2} \neq 0$ if
$m_1=0$ and $m_2\neq 0$ and in this case
\begin{eqnarray*}
\widehat{MC}\Big( {(e_{l_1~0},a),(e_{l_2m_2},b)}  \Big)  &=& {MC}\Big( e_{l_1~0}, e_{l_2m_2}  \Big)
+ (-1)^{m_2}a m_2 g^{l_2-m_2}_{l_1~0l_2m_2}.
\end{eqnarray*}
Since $m_2 g^{l_2-m_2}_{l_1~0l_2m_2}=-m_2 g^{l_2m_2}_{l_1~0l_2-m_2}$ then we have
\begin{eqnarray*}
\widehat{MC}\Big( {(e_{l_1~0},a),(e_{l_2m_2},b)}  \Big)  &=& {MC}\Big( e_{l_1~0}, e_{l_2m_2}  \Big)
+ (-1)^{m_2}a m_2 g^{l_2-m_2}_{l_1~0l_2m_2}\\
 &=& {MC}\Big( e_{l_1~0}, e_{l_2-m_2}  \Big) + (-1)^{m_2}a (-m_2) g^{l_2m_2}_{l_1~0l_2-m_2}\\
 &=& \widehat{MC}\Big( {(e_{l_1~0},a),(e_{l_2-m_2},b)}  \Big)
\end{eqnarray*}
This last means that it suffices to consider just the case that  $m_2$ is a natural number.
Moreover, the term $m_2 g^{l_2-m_2}_{l_1~0l_2m_2}\neq 0$ if $l_1+2l_2$ is not even. In the other words, the term $m_2 g^{l_2-m_2}_{l_1~0l_2m_2}$ vanishes when $l_1$ is even. In fact if $ l_1=2k + 1$ for some $k\in\mathbb{N}$ then for any $1<l_2$ and $0<|m_2|\leq l_2$ for a suitable choice of $a$ the conjugate points exists. The critical ratios for the parameter $a$ that ensure the positivity of the Misiolek criterion are presented in Table (\ref{my-label}) for various wave numbers with $1 \leq l_1 \leq 5$.

%
%%%%%%%%%%%%%%%%%%%%%%%%%%%%%%%%%%%%%%%%%%%%%%%%%%%%%%%%%%%%%%%%%   Remark   %%%%%%%%%%%%%%%%%%%%%%%%%%%%%%%%%%%%%%%%%
%
\begin{Rem}
Note that $MC(e_{l_1~0},e_{l_2m_2})$ is always non-positive. Consequently, the change in sign in the term $(-1)^{m_2}m_2g^{l_2-m_2}_{l_1m_1l_2m_2}$ is responsible for the suggested form of the inequality suggested by the critical ratio of $a$. More precisely if  $(-1)^{m_2}m_2g^{l_2-m_2}_{l_1m_1l_2m_2}>0$ then, 
\[
a> \frac{-MC(e_{l_10},e_{l_2m_2})}{(-1)^{m_2}m_2 g_{l_10~l_2m_2}^{l_2-m_2}}.
\]
In the case that  $(-1)^{m_2}m_2g^{l_2-m_2}_{l_1m_1l_2m_2}<0$ we have the condition 
\[
a< \frac{-MC(e_{l_10},e_{l_2m_2})}{(-1)^{m_2}m_2 g_{l_10~l_2m_2}^{l_2-m_2}}.
\]
In table (\ref{my-label}), the red numbers are associated with $(-1)^{m_2}m_2g^{l_2-m_2}_{l_1m_1l_2m_2}<0$, and for the other values, we have $(-1)^{m_2}m_2g^{l_2-m_2}_{l_1m_1l_2m_2}>0$.

Intuitively to catch conjugate points, we have to change the direction of rotation and modify the speed of this rotation according to the critical ratio.

In meteorology, the stability of zonal flows on a rotating sphere according to the critical ratios for the rotation rate, has been studied by several authors, e.g. \cite{Bains} and \cite{Sasaki}. The previous results could be considered as geometric counterparts from the perspective of nonlinear stability.
\end{Rem}

As we can see from Table \ref{my-label}, generally, proposing a simple closed form for $g^{l_2-m_2}_{l_1m_1l_2m_2}$ and consequently determining the sign of this term is not easy. However, in the special case that $l_2=m_2$, formula \eqref{3j Edmond j1 m1} implies that
\begin{eqnarray*}
&&  \left(
\begin{array}{ccc}
l_1    &   l_2  &  l_2\\
0          &  l_2  &   -l_2
\end{array}\right)  =   \Big(   \frac{\left(2 {l_1} \right)!^{2}}{\left(2 {l_1} +{l_3} +1\right)! \left(2 {l_1} -{l_3} \right)!}   \Big)^{\frac{1}{2}}.
\end{eqnarray*}
Now the structure constant
\begin{eqnarray*}
&& g^{l_2-l_2}_{l_1 ~0~ l_2l_2} =   \frac{-1}{  \sqrt{4\pi}  } \Big( (2l_1 +1) (2l_2+1)^2  l_1(l_1+1) l_2(l_2+1)  \\
&&  \times   \frac{\left(2 {l_1} \right)!^{2}}{\left(2 {l_1} +{l_3} +1\right)! \left(2 {l_1} -{l_3} \right)!}   \Big)^{\frac{1}{2}}
\left(
\begin{array}{ccc}
l_1    &   l_2  &  l_3\\
1      &  - 1  &   0
\end{array}\right)
\end{eqnarray*}
 Moreover,
\begin{eqnarray*}
&&  \left(
\begin{array}{ccc}
l_1    &   l_2  &  l_3\\
1      &  - 1  &   0
\end{array}\right)  = (-1)^{\frac{J}{2}}  \, (\frac{J}{2}  )!\\
&&\frac{   \Big(   \frac{ (J +1 ) (J -2  l_3 ) (J -2 l_1 ) (J -2 l_2 -1 )}{l_1 (l_1 + 1 ) {l_2} \left(l_2+ 1 \right)}            \times             \frac{\left(J -2 {l_3} \right)! \left(J -2 {l_1} \right)! \left(J -2 {l_2} -2\right)!}{\left(J +1\right)!}  \Big)^{\frac{1}{2}}    }{    2 \left(\frac{J}{2}-{l_3} \right)! \left(\frac{J}{2}-{l_1} \right)! \left(\frac{J}{2}-{l_2} -1\right)!} \nonumber
\end{eqnarray*}
and in the case that $J=l_1+l_2+l_3+1$ is odd, the $3j$ symbols vanishes.
As a result the sign of $ g^{l_2-l_2}_{l_1 ~0~ l_2l_2} $ is $-(-1)^{\frac{l_1+2l_2+1}{2}}$.

%
%
%
%
%
%
%
%
%
%
%%%%%%%%%%%%%%%%%%%%%%%%%%%%%%%%%%%%%%%%%%%%%%%%           Table of critical ratio        %%%%%%%%%%%%%%%%%%%%%%%%%%%%%%%%%%%%
%%%%%%%%%%%%%%%%%%%%%%%%%%%%%%%%%%%%%%%%%%%%%%%%           Table of critical ratio        %%%%%%%%%%%%%%%%%%%%%%%%%%%%%%%%%%%%
%
%
\begin{table}
\caption{The critical ratio $\frac{-MC(e_{l_10},e_{l_2m_2})}{(-1)^{m_2}m_2 g_{l_10~l_2m_2}^{l_2-m_2}}$.}
\label{my-label}
\begin{tabular}{|*{8}{c|}}
\hline
\multicolumn{2}{|c|}{\multirow{3}{*}{Ratio}}
& \multicolumn{6}{c|}{$m_2$} \\
\cline{3-8}
\multicolumn{2}{|c|}{}
& \multicolumn{1}{c|}{1}
& \multicolumn{1}{c|}{2}
& \multicolumn{1}{c|}{3}
& \multicolumn{1}{c|}{4}
& \multicolumn{1}{c|}{5}
& \multicolumn{1}{c|}{6} \\
\multicolumn{2}{|c|}{} &  &  &  &  &  &  \\
\hline
\multirow{4}{*}{$l_1=3$}
& $l_2=2$ &  2.983  & \textcolor{red}{-19.39}  & -- & --  & -- & -- \\ \cline{2-8}
& $l_2=3$ & 12.20   & 30.53 &  \textcolor{red}{-20.35} & --  & -- & -- \\ \cline{2-8}
& $l_2=4$ & 41.51  & 43.87 & 73.68 & \textcolor{red}{-24.43} & -- & -- \\ \cline{2-8}
& $l_2=5$ & 80.5 & 77.62 & 78.54  & 192.4 & \textcolor{red}{-31.31} & -- \\ \hline
\multirow{4}{*}{$l_1=5$}
& $l_2=2$ & 0  & 0  & -- & --  & -- &--\\ \cline{2-8}
& $l_2=3$ & 19.41   & \textcolor{red}{-71.19} & 170.4 & 0  & 0 & 0 \\ \cline{2-8}
& $l_2=4$ & 45.64 & 269.0 & \textcolor{red}{-60.40} & 125.4 & 0 & 0 \\ \cline{2-8}
& $l_2=5$ & 101.6 & 226.5 & \textcolor{red}{-616.9}  & \textcolor{red}{-72.31} & 123.5 & 0 \\ \hline
\multirow{5}{*}{$l_1=7$}
& $l_2=2$ & 0 & 0  & -- & --  & -- & -- \\ \cline{2-8}
& $l_2=3$ & 0   & 0 & 0 & --  & -- & -- \\ \cline{2-8}
& $l_2=4$ & 71.66 & \textcolor{red}{-205.5} & 276.7 & \textcolor{red}{-1279} & -- & -- \\ \cline{2-8}
& $l_2=5$ & 127.8 & 4792 &  \textcolor{red}{-171.4}  & 182.1 & \textcolor{red}{-713.5} & -- \\ \hline\cline{2-8}
& $l_2=6$ & 234.1 & 881.2 & \textcolor{red}{-475.7}  & \textcolor{red}{-245.1} & 175.2& \textcolor{red}{-569.9}  \\ \hline\cline{2-8}
\end{tabular}
\end{table}

%
%
%
%%%%%%%%%%%%%%%%%%%%%%%%%%%%%%%%%%%%%%         Rossby-haurwitz waves       %%%%%%%%%%%%%%%%%%%%%%%%%%%%%%%%
%%%%%%%%%%%%%%%%%%%%%%%%%%%%%%%%%%%%%%         Rossby-haurwitz waves       %%%%%%%%%%%%%%%%%%%%%%%%%%%%%%%%
%%%%%%%%%%%%%%%%%%%%%%%%%%%%%%%%%%%%%%         Rossby-haurwitz waves       %%%%%%%%%%%%%%%%%%%%%%%%%%%%%%%%
%
%
%
\subsection{Conjugate points along complex Rossby-Haurwitz waves}
Following the formalism of \cite{Skiba}, Rossby-Haurwitz waves are defined as follows
\begin{equation}\label{RH waves}
\Psi(\lambda,\mu,t)= \sum_{m=-l}^l \psi_{lm} Y_{lm}(\lambda-\omega t,\mu) -C\mu 
\end{equation}
where $\omega,C\in\mathbb{R}$ and $\psi_{lm}\in\mathbb{C}$ are constants.
Note that
\begin{eqnarray*}
\Delta\Psi  &=&  \Delta\sum_{m=-l}^l \psi_{lm} Y_{lm}(\lambda-\omega t,\mu) -C\Delta\mu\\
&=&-l(l+1)\sum_{m=-l}^l \psi_{lm} Y_{lm}(\lambda-\omega t,\mu)+ 2C\mu
\end{eqnarray*}
and $\partial_t(\Delta\Psi -\alpha^2\Psi)=(\omega l(l+1) + \omega\alpha^2)\frac{\partial \Psi}{\partial \lambda}$. Moreover we have
\begin{eqnarray*}
&&   \{ \Delta\Psi -a\mu,\Psi \}  =  \{  -l(l+1)\sum_{m=-l}^l \psi_{lm} Y_{lm}(\lambda-\omega t,\mu)+ 2C\mu -a\mu,\Psi  \}\\
&&   =\{  -l(l+1)\Big[\sum_{m=-l}^l \psi_{lm} Y_{lm}(\lambda-\omega t,\mu) - C\mu + C\mu\Big]+ 2C \mu -a\mu,\Psi  \}\\
&&   =\{  -l(l+1)\Psi -l(l+1)C \mu+ 2C\mu -a\mu,\Psi  \}\\
&&   = -l(l+1) \{ \Psi,\Psi \} + (l(l+1)C-2C + a) \frac{\partial \Psi}{\partial \lambda}\\
&&   = (l(l+1)C-2C + a) \frac{\partial \Psi}{\partial \lambda}.
\end{eqnarray*}
As a result $\Psi(\lambda,\mu,t)$ is a solution of the quasi-geostrophic equation \eqref{eq geodesic-equation of central extension reduced} if $\partial_t(\Delta\Psi -\alpha^2\Psi) = \{\Delta\Psi -a\mu,\Psi\}$ or equivalently 
\begin{equation}\label{eqn RHW sol}
\omega l(l+1) + \omega\alpha^2=l(l+1)C-2C + a. 
\end{equation}
%or
%
%$\omega  = \frac{(l(l+1)C-2C + a)}{l(l+1) +\alpha^2}.$ 
%
For simplicity suppose that $\Psi$ has the form
\begin{equation*}
\Psi(\lambda,\mu,t) = A Y_{l_1m_1}(\lambda-\omega t,\mu) -C \mu     
\end{equation*}
with $m_1\neq 0$.
%
%
%%%%%%%%%%%%%%%%%%%%%%%%%%%%%%%%%%%%%%%%%%%%%   Prop Mis Curv RHW    %%%%%%%%%%%%%%%%%%%%%%%%%%%%%%%%%%%
%
%
\begin{prop}\label{Prop MC RHW}
With the above assumptions the followings hold true.\\
\textbf{1.} For $0\leq |m_2|\leq l_2$ we have
\begin{eqnarray*}
MC( \nabla^\perp \Psi , e_{m~-m} ) =   |A|^2{MC}(  e_{l_1m_1} , e_{l_2m_2} ) + C^2m_2^2 (2-l_2(l_2+1) ).
\end{eqnarray*}
\textbf{2.} For $(\nabla^\perp \Psi ,a) ,   ( e_{l_2m_2},b) \in\hat\cg$ the Misiolek criterion is given by
\begin{eqnarray*}
\widehat{MC}\Big( (\nabla^\perp \Psi ,a) ,   (e_{l_2m_2},b) \Big) &=&   |A|^2{MC}(  e_{l_1m_1} , e_{l_2m_2} ) + C^2m_2^2 (2-l_2(l_2+1) )\\
&&  -|A|^2m_1^2  \delta^{l_1}_{l_2}\delta^{m_1}_{m_2} -a m_2^2C.
\end{eqnarray*}
\end{prop}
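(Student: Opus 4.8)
The plan is to compute the Misiolek criterion directly from the stream-function formula in Lemma~\ref{lemma MC on Dq}, exploiting bilinearity of the Poisson bracket to split $\Psi = A Y_{l_1m_1} - C\mu$ into its harmonic and zonal parts. First I would record that $\nabla^\perp\Psi = A\,e_{l_1m_1} - C\,e_{10}$ (up to normalization of $Y_{10}$, i.e.\ $\mu$ is proportional to $Y_{10}$), so that the bracket $\{\Psi,g\}$ expands as $A\{Y_{l_1m_1},g\} - C\{\mu,g\}$; the key arithmetic facts I would invoke are $\Delta(A Y_{l_1m_1}-C\mu) = -A l_1(l_1+1)Y_{l_1m_1} + 2C\mu$ (since $\Delta\mu = -2\mu$) and $\{\mu,g\} = \partial_\lambda g$. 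Substituting $f=\Psi$, $g=Y_{l_2m_2}$ into \eqref{eqn Misiolek Curvature on S2} and multiplying everything out, the cross terms between the $AY_{l_1m_1}$ piece and the $-C\mu$ piece must be checked to vanish: this is where the structure-constant orthogonality (Corollary~\ref{Cor bounds for str cts}, plus the $m$-selection rule $m_1+m_2+m_3=0$) and the explicit computation $\langle\{\mu,f\},g\rangle = -im_1\langle Y_{l_1m_1},Y_{l_2m_2}\rangle$ from the proof of Proposition~\ref{Prop Mis Curv e rot} do the work. What survives is $|A|^2$ times the already-computed $MC(e_{l_1m_1},e_{l_2m_2})$ of Proposition~\ref{Prop Mis Curv e}, plus a purely zonal contribution $C^2\,\langle\Delta\{\mu,Y_{l_2m_2}\},\{\mu,Y_{l_2m_2}\}\rangle - C^2\langle\{\Delta\mu,Y_{l_2m_2}\},\{\mu,Y_{l_2m_2}\}\rangle$, and from $\{\mu,Y_{l_2m_2}\} = im_2 Y_{l_2m_2}$, $\Delta\mu=-2\mu$ one gets $C^2 m_2^2(-l_2(l_2+1)) - C^2 m_2^2(-2) = C^2 m_2^2(2 - l_2(l_2+1))$, which is part~\textbf{1}. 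Specializing $g=Y_{m,-m}$ (so $l_2=m$, $m_2=-m$) gives the stated formula verbatim, though as written part~\textbf{1} is stated for general $(l_2,m_2)$.

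For part~\textbf{2} I would start from the full central-extension criterion \eqref{eqn Misiolek Curvature on S2}, which adds the two extra terms $-\langle\{\mu,f\},g\rangle^2$ and $-a\langle\{\mu,\{f,g\}\},g\rangle$ to the expression handled in part~\textbf{1}. The first is $-|\langle\{\mu,\Psi\},Y_{l_2m_2}\rangle|^2$; since $\{\mu,\Psi\} = iAm_1 Y_{l_1m_1} - C\partial_\lambda\mu = iAm_1 Y_{l_1m_1}$ (because $\partial_\lambda\mu = 0$), this term contributes $-|A|^2 m_1^2\,\delta^{l_1}_{l_2}\delta^{m_1}_{m_2}$, using the orthogonality relation \eqref{inner pproduct}. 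For the last term I would write $\{f,g\} = \{\Psi,Y_{l_2m_2}\} = A\{Y_{l_1m_1},Y_{l_2m_2}\} - C\{\mu,Y_{l_2m_2}\} = A\{Y_{l_1m_1},Y_{l_2m_2}\} - iCm_2 Y_{l_2m_2}$, then apply $\{\mu,\cdot\} = \partial_\lambda$ once more and pair against $Y_{l_2m_2}$: the $A$-part pairs to something proportional to $g^{l_2-m_2}_{l_1m_1l_2m_2}$, which vanishes here because $m_1\neq 0$ forces $m_1 + m_2 - m_2 = m_1 \neq 0$ against the selection rule, while the $C$-part gives $-a\cdot(-C m_2)\cdot\langle im_2 Y_{l_2m_2}, Y_{l_2m_2}\rangle$ and after using $\langle Y_{l_2m_2}, Y_{l_2m_2}\rangle$ via the conjugation convention $\langle e_{l_2m_2},e_{l_2m_2}\rangle := \langle e_{l_2m_2}, e_{l_2m_2}^*\rangle$ collapses to $-a m_2^2 C$. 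Adding these two contributions to the formula of part~\textbf{1} yields exactly the claimed identity.

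The main obstacle, and the step deserving the most care, is verifying that all cross terms between the non-zonal part $AY_{l_1m_1}$ and the zonal part $-C\mu$ genuinely cancel in both parts — in particular the mixed term $2\,\mathrm{Re}\big(A\bar C\langle\Delta\{Y_{l_1m_1},Y_{l_2m_2}\cdot\ \text{or}\ \mu\},\ \dots\rangle\big)$-type contributions arising when one expands $\langle\Delta\{f,g\},\{f,g\}\rangle$ and $\langle\{\Delta f,g\},\{f,g\}\rangle$ with $f$ a sum. These cancellations rely on the two independent selection rules for $g^{l_3m_3}_{l_1m_1l_2m_2}$ (the $m$-sum rule and the triangle/parity constraint of Corollary~\ref{Cor bounds for str cts}) together with the orthonormality \eqref{inner pproduct}; a clean way to organize this is to observe that $\{Y_{l_1m_1},Y_{l_2m_2}\}$ lives in the span of $\{Y_{l_3,m_1+m_2}\}$ with $l_3$ of prescribed parity, whereas $\{\mu,Y_{l_2m_2}\} = im_2 Y_{l_2m_2}$ lives in a single mode of azimuthal index $m_2$, so when $m_1\neq 0$ these spans are orthogonal and every cross term is forced to zero without further computation. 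I would also double-check the sign bookkeeping coming from the complex-conjugation convention on the $\cL^2$ pairing of non-real spherical harmonics, since that is the one place a stray $(-1)^{m}$ could slip in.
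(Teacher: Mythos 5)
Your proposal is correct and follows essentially the same route as the paper: decompose $\Psi$ into $A\,Y_{l_1m_1}$ plus the zonal part, kill the cross terms through the azimuthal selection rule with $m_1\neq 0$ (the paper cites Proposition \ref{Prop Mis Curv linearity} for exactly this), evaluate the zonal contribution (the paper uses Corollary \ref{Cor-MC-l1=1} together with the explicit constant $g^{1~0}_{l_2m_2l_2-m_2}$, while you compute $\{\mu,Y_{l_2m_2}\}$ and $\Delta\mu=-2\mu$ directly), and treat the two extra central-extension terms just as the paper does, with the $A$-part of $-a\langle\{\mu,\{f,g\}\},g\rangle$ vanishing because $m_1\neq 0$. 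The only slip is the sign convention: with the paper's bracket \eqref{poisson bracket preston} one has $\{\mu,g\}=-\partial_\lambda g$, not $+\partial_\lambda g$, but this is harmless since that factor enters every surviving term either through a modulus squared or an even number of times.
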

%
%
%%%%%%%%%%%%%%%%%%%%%%%%%%%%%%%%%%%%%%%%                  proof RHW             %%%%%%%%%%%%%%%%%%%%%%%%%%%%%%%%%%%%
%%%%%%%%%%%%%%%%%%%%%%%%%%%%%%%%%%%%%%%%                  proof RHW             %%%%%%%%%%%%%%%%%%%%%%%%%%%%%%%%%%%%
%
%
\begin{proof}
\textbf{1.} Since $\mu =\sqrt{\frac{4\pi}{3}} Y_{1~0}$, and $ Y_{l_1m_1}(\lambda-\omega t,\mu) =e^{-im_1\omega t}  Y_{l_1m_1}(\lambda,\mu) $, using proposition \ref{Prop Mis Curv linearity} part \textbf{ii} we get
\begin{eqnarray*}
&&  {MC}( \nabla^\perp \Psi , e_{l_2m_2} ) =  |Ae^{-im_1\omega t}|^2{MC}(  e_{l_1m_1} , e_{l_2m_2} ) + \frac{4\pi}{3}C^2 {MC}(  e_{1~0} , e_{l_2m_2} ).
\end{eqnarray*}
On the other hand,  using corollary \ref{Cor-MC-l1=1} and the fact that $g^{1~0}_{l_2m_2l_2-m_2}=(-1)^{m_2}m_2\sqrt{\frac{3}{4\pi}}$ (see also \cite{Arak-Sav} equation A14) we have
\begin{eqnarray*}
{MC}(  e_{1~0} , e_{l_2m_2} )   & = & \sum_{l_3=  l_2-1 +1  }^{l_2+1-1}       \Big( g^{l_3-m_2}_{1~0~l_2m_2} \Big)^2 \Big( 2  -   l_3(l_3+1)  \Big)\\
&=& \Big( g^{l_2-m_2}_{1~0~l_2m_2} \Big)^2 \Big( 2  -   l_2(l_2+1)  \Big)\\
&=&  \Big( g^{1~0}_{l_2m_2l_2-m_2}   \Big)^2 \Big( 2  -   l_2(l_2+1)  \Big)\\
&=&  m_2^2\frac{3}{4\pi} \Big( 2  -   l_2(l_2+1)  \Big).
\end{eqnarray*}
As a result we get
\begin{eqnarray*}
&&  {MC}( \nabla^\perp \Psi , e_{l_2m_2} ) =  |A|^2{MC}(  e_{l_1m_1} , e_{l_2m_2} ) + C^2m_2^2( 2-l_2(l_2+1) ).
\end{eqnarray*}
\textbf{2.} 
First note that
\begin{eqnarray*}
\{\mu,\Psi\}  &=&   \{\mu,A Y_{l_1m_1}(\lambda-\omega t,\mu) -C \mu\}\\
&=&   A e^{-im_1\omega t} \{\mu, Y_{l_1m_1}(\lambda,\mu) \}\\
&=&   A e^{-im_1\omega t} (-\frac{\partial \mu}{\partial \mu}  ) \frac{\partial}{\partial\lambda} Y_{l_1m_1} \\
&=&   - im_1A e^{-im_1\omega t} Y_{l_1m_1} 
\end{eqnarray*}
Moreover the term $\langle\{\mu,\Psi\},Y_{l_2m_2}\rangle^2$ in \eqref{eqn Misiolek Curvature on S2} can be calculated as follows
\begin{eqnarray*}
\langle\{\mu,\Psi\},Y_{l_2m_2}\rangle^2  &=&  |- im_1A e^{-im_1\omega t} |^2 \langle  Y_{l_1m_1}   ,  Y_{l_2m_2} \rangle^2\\
&=&   m_1^2|A|^2\delta^{l_1}_{l_2}\delta^{m_1}_{m_2}.
\end{eqnarray*}
On the other hand we have
\begin{eqnarray*}
\{\Psi,Y_{l_2m_2}\}  &=&   \{A  Y_{l_1m_1}(\lambda-\omega t,\mu) -C \mu   ,  Y_{l_2m_2}\}\\
&=&   A e^{-im_1\omega t} \{ Y_{l_1m_1} , Y_{l_2m_2} \}  -  C \sqrt{\frac{4\pi}{3}}  \{ Y_{1~0} , Y_{l_2m_2} \} \\
&=&   \Big( A e^{-im_1\omega t} G^{l_3m_3}_{l_1m_1l_2m_2}  -  C \sqrt{\frac{4\pi}{3}}  G^{l_3m_3}_{1~0~l_2m_2}\Big)  Y_{l_3m_3}  
\end{eqnarray*}
and
\begin{eqnarray*}
\{\mu,    \{\Psi,Y_{l_2m_2}\}  \}  &=& -im_3\Big( A e^{-im_1\omega t} G^{l_3m_3}_{l_1m_1l_2m_2}  -  C \sqrt{\frac{4\pi}{3}}  G^{l_3m_3}_{1~0~l_2m_2}\Big) Y_{l_3m_3} 
\end{eqnarray*}
which implies that
\begin{eqnarray*}
\langle \{\mu,    \{\Psi,Y_{l_2m_2}\} , Y_{l_2m_2} \rangle  &=& -im_3\Big( A e^{-im_1\omega t} G^{l_3m_3}_{l_1m_1l_2m_2}  -  C \sqrt{\frac{4\pi}{3}}  G^{l_3m_3}_{1~0~l_2m_2}\Big) \delta^{l_3}_{l_2}\delta^{m_3}_{m_2}\\
&=&   -im_2\Big( A e^{-im_1\omega t} G^{l_2m_2}_{l_1m_1l_2m_2}  -  C \sqrt{\frac{4\pi}{3}}  G^{l_2m_2}_{1~0~l_2m_2}\Big) \\
&=&   -im_2\Big(   -  C(-i)(-1)^{m_2} \sqrt{\frac{4\pi}{3}}  g^{l_2 ~ -m_2}_{1~0~l_2m_2}\Big) \\
%
%&=&   -  C(-i)^2m_2(-1)^{m_2} \sqrt{\frac{4\pi}{3}}     g^{l_2 ~ -m_2}_{1~0~l_2m_2} \\
%
&=&     C m_2(-1)^{m_2} \sqrt{\frac{4\pi}{3}}     g^{1~0}_{l_2m_2 l_2~-m_2}    =  C m_2^2.
%
%
%&=&     C m_2^2(-1)^{2m_2} \sqrt{\frac{4\pi}{3}}   \sqrt{\frac{3}{4\pi}}  =  C m_2^2.
\end{eqnarray*}
Now using formula \eqref{eqn Misiolek Curvature on S2} and the above facts we see that
\begin{eqnarray*}
\widehat{MC}\Big( (\nabla^\perp \Psi ,a) ,   (e_{l_2m_2},b) \Big) &=&   |A|^2{MC}(  e_{l_1m_1} , e_{l_2m_2} ) + C^2m_2^2 (2-l_2(l_2+1) )\\
&&  -|A|^2m_1^2 \delta^{l_1}_{l_2}\delta^{m_1}_{m_2} -a m_2^2C
\end{eqnarray*}
which completes the proof.
\end{proof}
%
%
%%%%%%%%%%%%%%%%%%%%%%%%%%%%%%%%%%%%%%%%%%%%%%%%%%  Corollary RHW CP    %%%%%%%%%%%%%%%%%%%%%%%%%%%%%%%%%%%%%%%%%%%%%%%%%%
%%%%%%%%%%%%%%%%%%%%%%%%%%%%%%%%%%%%%%%%%%%%%%%%%%  Corollary RHW CP    %%%%%%%%%%%%%%%%%%%%%%%%%%%%%%%%%%%%%%%%%%%%%%%%%%
%
%
\begin{cor}
Suppose that $l_1\neq 0$,  $l_2=m$, $m_2=-m$ with $2\leq m\leq m_1$. As a result of theorem \ref{Theorem MC S.H.} and proposition \ref{Prop MC RHW} part 1 we have 
\begin{eqnarray}\label{eq A/C quotient}
&&  {MC}( \nabla^\perp \Psi , e_{m~-m} )>0  \iff   \frac{|A|^2}{C^2}         >        \frac{      m^2( m(m+1)-2)        }{      {MC}(  e_{l_1m_1} , e_{m~-m} )}.
\end{eqnarray}
At the presence of the Coriolis force, suppose that $a=-KC$ where $K$ is a positive real number. Then, proposition \ref{Prop MC RHW} part 2 implies that 
\begin{eqnarray*}
\widehat{MC}\Big( (\nabla^\perp \Psi ,a) ,   (e_{m~-m},b) \Big) &=&   |A|^2{MC}(  e_{l_1m_1} , e_{m~-m} ) + C^2m^2 (2-m(m+1) )\\
&&   +K m^2C^2\\
&>&    {MC}( \nabla^\perp \Psi , e_{m~-m} )
\end{eqnarray*}
and $\widehat{MC}\Big( (\nabla^\perp \Psi ,a) ,   (e_{m~-m},b) \Big)>0$  if and only if 
\begin{eqnarray}\label{eq A/C quotient Coriolis}
\frac{|A|^2}{C^2}         >        \frac{      m^2( m(m+1)-2-K)        }{      {MC}(  e_{l_1m_1} , e_{m~-m} )}.
\end{eqnarray}
As another special case,  using  proposition \ref{Cor-MC-l1=1} we have
\begin{eqnarray*}
\widehat{MC}\Big( (\nabla^\perp \Psi ,a) ,   (e_{1~m_2},b) \Big) &=&   K C^2>0.
\end{eqnarray*}
Finally, for a suitable choice of the parameters $K$ and $A$, the index
\begin{eqnarray*}
\widehat{MC}\Big( (\nabla^\perp \Psi ,a) ,   (e_{l_2m_2},b) \Big) &=&   |A|^2{MC}(  e_{l_1m_1} , e_{l_2m_2} ) \\
&&   + m_2^2C^2(K+ 2 -l_2(l_2+1) )
\end{eqnarray*}
could be positive for any $0<|m_2|\leq l_2$ (i.e., $K$ large enough and $|A|^2$ small). 
We see that the Coriolis effect makes the system more stable and creates conjugate points that wouldn't exist without it.

The same argument applies analogously to the case where $l_1\geq 3$ and $2\leq l_2<l_1$, implying that $MC(e_{l_1 1} ,e_{l_2~1})>0$.

\end{cor}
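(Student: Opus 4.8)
The plan is to obtain every assertion by specialising Proposition~\ref{Prop MC RHW} and combining it with the strict positivity of Theorem~\ref{Theorem MC S.H.} and the vanishing identities of Corollary~\ref{Cor-MC-l1=1}; no new analytic input is needed.

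First, for the criterion without rotation I would put $l_2=m$, $m_2=-m$ in Proposition~\ref{Prop MC RHW} part~1, giving
\[
MC(\nabla^\perp\Psi,e_{m~-m})=|A|^2\,MC(e_{l_1m_1},e_{m~-m})-C^2m^2\big(m(m+1)-2\big),
\]
and note $m(m+1)-2>0$ because $m\geq2$. Since $2\leq m\leq m_1$ forces $1<m_1\leq l_1$, Theorem~\ref{Theorem MC S.H.} part~(i) gives $MC(e_{l_1m_1},e_{m~-m})>0$; hence, when $C\neq0$, dividing the inequality $MC(\nabla^\perp\Psi,e_{m~-m})>0$ by the positive numbers $C^2$ and $MC(e_{l_1m_1},e_{m~-m})$ yields \eqref{eq A/C quotient} (and the statement is automatic when $C=0$).

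Next I would turn on the Coriolis term by inserting $a=-KC$, $K>0$, into Proposition~\ref{Prop MC RHW} part~2 with the same $l_2=m$, $m_2=-m$: the contribution $-a\,m_2^2C$ equals $Km^2C^2>0$, which simultaneously produces the displayed expression for $\widehat{MC}\big((\nabla^\perp\Psi,a),(e_{m~-m},b)\big)$, shows it exceeds $MC(\nabla^\perp\Psi,e_{m~-m})$ by exactly $Km^2C^2$, and — after the same rearrangement as before — gives \eqref{eq A/C quotient Coriolis}; in particular once $K>m(m+1)-2$ the right-hand side of \eqref{eq A/C quotient Coriolis} is negative, so the condition holds for every $A$. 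For the $e_{1~m_2}$ case I would take $l_2=1$, so that $m_2=\pm1$, $m_2^2=1$ and $2-l_2(l_2+1)=0$; Corollary~\ref{Cor-MC-l1=1} part~(i) kills $MC(e_{l_1m_1},e_{1~m_2})$ and, since $m_1\neq0$ and $l_1\neq1$, the Kronecker term vanishes, leaving $\widehat{MC}\big((\nabla^\perp\Psi,a),(e_{1~m_2},b)\big)=-aC=KC^2>0$. Finally, for the general claim I would fix $(l_2,m_2)$ with $0<|m_2|\leq l_2$ and $(l_2,m_2)\neq(l_1,m_1)$, so that Proposition~\ref{Prop MC RHW} part~2 with $a=-KC$ reads $\widehat{MC}=|A|^2MC(e_{l_1m_1},e_{l_2m_2})+m_2^2C^2(K+2-l_2(l_2+1))$; since $MC(e_{l_1m_1},e_{l_2m_2})$ is a fixed finite real number, choosing $K>l_2(l_2+1)-2$ makes the second summand a fixed positive quantity, and then choosing $|A|$ small enough that $|A|^2|MC(e_{l_1m_1},e_{l_2m_2})|$ is smaller than it forces $\widehat{MC}>0$. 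The same arguments, with Theorem~\ref{Theorem MC S.H.} part~(ii) in place of part~(i), handle $l_1\geq3$ with $e_{l_2~1}$, $2\leq l_2<l_1$, in the role of $e_{m~-m}$.

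There is no genuine obstacle: the whole corollary is sign-bookkeeping on top of already-established formulas. The only places that need attention are verifying $m(m+1)-2>0$ and, more importantly, that every division of an inequality is by a quantity of known sign — which is exactly where the strict positivity from Theorem~\ref{Theorem MC S.H.} is indispensable — together with the degenerate case $C=0$ and, for the last claim, the remark that $MC(e_{l_1m_1},e_{l_2m_2})$ is a finite sum and therefore can be dominated by the positive Coriolis contribution once $K$ is large and $|A|$ is small.
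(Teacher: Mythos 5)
Your proposal is correct and follows essentially the same route as the paper: the corollary is obtained by specialising Proposition \ref{Prop MC RHW} (parts 1 and 2) with $a=-KC$, invoking Theorem \ref{Theorem MC S.H.} for the strict positivity of $MC(e_{l_1m_1},e_{m\,-m})$ (resp.\ $MC(e_{l_1 1},e_{l_2 1})$) that licenses the divisions, and Corollary \ref{Cor-MC-l1=1} for the $e_{1\,m_2}$ case. The only point you leave implicit in the $e_{m\,-m}$ computation is that the term $-|A|^2m_1^2\delta^{l_1}_{l_2}\delta^{m_1}_{m_2}$ drops out because $m_1\geq 2>0>-m$, which is exactly the observation you do make explicitly for the $e_{1\,m_2}$ case, so the argument is complete.
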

%
%
%
%%%%%%%%%%%%%%%%%%%%%%%%%%%%%%%%%%%%%%%%%%%%%%%%%%%%%   Appendix  m=2     %%%%%%%%%%%%%%%%%%%%%%%%%%%%%%%%%%%%%%%%%%%%%%%%%%%%%%%%
%%%%%%%%%%%%%%%%%%%%%%%%%%%%%%%%%%%%%%%%%%%%%%%%%%%%%   Appendix  m=2     %%%%%%%%%%%%%%%%%%%%%%%%%%%%%%%%%%%%%%%%%%%%%%%%%%%%%%%%
%%%%%%%%%%%%%%%%%%%%%%%%%%%%%%%%%%%%%%%%%%%%%%%%%%%%%   Appendix  m=2     %%%%%%%%%%%%%%%%%%%%%%%%%%%%%%%%%%%%%%%%%%%%%%%%%%%%%%%%
%
%
%

\textbf{Acknowledgements}. Funded by the Deutsche Forschungsgemeinschaft (DFG, German
Research Foundation) - 517512794. 
%The author expresses gratitude to Prof. George Savvidy, Prof. Tsuyoshi Yoneda and Taito Tauchi for their valuable contributions during the %preparation of this paper.
%
%
%
%
%%%%%%%%%%%%%%%%%%%%%%%%%%%%%%%%%%%%%%%%%%%%%%%%%%%%%%%          bliography      %%%%%%%%%%%%%%%%%%%%%%%%%%%%%%%%%%%%%%%%%%%%%%%%%%%%%%%%%%%%%%%%%%
%%%%%%%%%%%%%%%%%%%%%%%%%%%%%%%%%%%%%%%%%%%%%%%%%%%%%%%          bliography      %%%%%%%%%%%%%%%%%%%%%%%%%%%%%%%%%%%%%%%%%%%%%%%%%%%%%%%%%%%%%%%%%%
%%%%%%%%%%%%%%%%%%%%%%%%%%%%%%%%%%%%%%%%%%%%%%%%%%%%%%%          bliography      %%%%%%%%%%%%%%%%%%%%%%%%%%%%%%%%%%%%%%%%%%%%%%%%%%%%%%%%%%%%%%%%%%
%
%
\bigskip

\end{document}